\newcommand{\ExExpanding}{(a)}
\newcommand{\ExFootball}{(b)}
\newcommand{\ExTwoCircles}{(c)}
\newcommand{\ExOscillatingCircle}{(d)}
\newcommand{\ExEscapingCircle}{(e)}
\newcommand{\ExThreeLeavedRose}{(f)}
\newcommand{\RR}{\mathbb{R}}
\newcommand{\Aa}{\mathcal{A}}
\newcommand{\CC}{\mathcal{C}}
\newcommand{\MM}{\mathcal{M}}
\newcommand{\UU}{\mathcal{U}}
\newcommand{\VV}{\mathcal{V}}
\newcommand{\uvec}{\mathbf{u}}
\newcommand{\xvec}{\mathbf{x}}
\newcommand{\yvec}{\mathbf{y}}
\newcommand{\nav}{\bar{\nu}}
\newcommand{\ntwo}{\hat{\mathfrak{n}}}
\newcommand{\nthree}{\hat{\nu}}
\newcommand{\first}{1^{\mathrm{st}}}
\newcommand{\bull}{\textbullet~}
\newcommand{\resp}{resp. }
\newcommand{\ie}{i.e., }
\newcommand{\eg}{e.g., }
\newcommand{\vs}{vs. }
\newcommand{\Ex}{Ex. }
\newcommand{\Acc}{\mathcal{A}}
\newcommand{\NB}{\mathcal{N}}
\newcommand{\Book}{\textit{Book}}
\title{A fast-marching algorithm for non-monotonically evolving fronts \thanks{Submitted to the journal's Methods and Algorithms for Scientific Computing section, April 16th, 2015. Accepted for publication April 14th, 2016.}} 
\author{Alexandra Tcheng, Jean-Christophe Nave \thanks{Department of Mathematics and Statistics, McGill University, 805 Sherbrooke Street West, Montreal, Quebec H3A 0B9, Canada. Emails: \texttt{alexandra.tcheng@mail.mcgill.ca}, \texttt{jcnave@math.mcgill.ca}
.}}
\begin{document}
\maketitle

\slugger{sisc}{xxxx}{xx}{x}{x--x}

\begin{abstract} The non-monotonic propagation of fronts is considered. When the speed function $F:\RR^{n} \times [0,T]\rightarrow \RR$ is prescribed, the non-linear advection equation $\phi_{t}+F|\nabla \phi|=0$ is a Hamilton-Jacobi equation known as the level-set equation. It is argued that a small enough neighbourhood of the zero-level-set $\MM$ of the solution $\phi: \RR^{n} \times [0,T] \rightarrow \RR$ is the graph of $\psi:\RR^{n} \rightarrow \RR$ where $\psi$ solves a Dirichlet problem of the form $H(\uvec,\psi(\uvec),\nabla \psi(\uvec))=0$. A fast-marching algorithm is presented where each point is computed using a discretization of such a Dirichlet problem, with no restrictions on the sign of $F$. The output is a directed graph whose vertices evenly sample $\MM$. The convergence, consistency and stability of the scheme are addressed. 
Bounds on the computational complexity are estimated, and experimentally shown to be on par with the Fast Marching Method. 
Examples are presented where the algorithm is shown to be globally first-order accurate. 
The complexities and accuracies observed are independent of the monotonicity of the evolution.
\end{abstract}

\begin{keywords} front propagation, Hamilton-Jacobi equations, viscosity solutions, fast marching method, level-set method, finite-difference schemes. 
\end{keywords}

\begin{AMS} 65M06, 65M22, 65H99, 65N06, 65N12, 65N22. \end{AMS}

\pagestyle{myheadings}
\thispagestyle{plain}
\markboth{A.TCHENG, J.-C.NAVE}{A fast-marching algorithm for non-monotone fronts}

\section{Introduction}
\label{sec:Introduction}

Front propagation is a time-dependent phenomenon occurring when the boundary between two distinct regions of space is evolving. 
It is possible to make the distinction between \emph{monotone} and \emph{non-monotone} motion of fronts. 
Consider a fire propagating through a forest: the interface divides space into a burnt and an unburnt region. It evolves monotonically in that, if a point $\xvec = (x_{1},\ldots,x_{n})$ of space belongs to the burnt region, then it cannot belong to the unburnt region at a later time \cite{OsherSethian}. In contrast, if a plate containing water is gently rocked, the front separating the dry region from the wet one may advance or recede. In this paper, we present an algorithm that dynamically builds a sampling of the subset of $\RR^{n} \times [0,T)$ consisting of the surface traced out by the front as it evolves through time. The structure of the scheme is akin to the Fast Marching Method, yet our approach is oblivious to the monotonicity of the evolution.

Given an initial front $\CC_{0}$ as a codimension-one subset of $\RR^{n}$ and an advection rule, our goal is to recover the front $\CC_{t}$ at later times $t>0$. Let each point of the front evolve with a prescribed speed $F:\RR^{n} \times [0,T] \rightarrow \RR$ in the direction of the outward normal to the front $\ntwo_{t} : \CC_{t} \rightarrow S^{n}$. The evolution is non-linear, such that even if $F$ and $\CC_{0}$ are smooth the front may become $C^{0}$ and undergo topological changes \cite{CrandallLions}.

On the one hand, a robust numerical method for tracking either kind of evolution is the Level-Set Method (LSM) \cite{OsherSethian,OsherFedkiw,sethian1999level}. This implicit approach embeds the front as the zero-level-set of a function $\phi:\RR^{n} \times [0,T) \rightarrow \RR$, and solves the initial-value problem:
\begin{eqnarray}
\label{eq:LSE}
\left\{ \begin{array}{rcll} 
\phi_{t}+F|\nabla\phi| &=& 0 & \quad \mathrm{in}~ \RR^{n} \times (0,T) \\
\phi(\xvec,0) &=& \phi_{0}(\xvec) & \quad \mathrm{on}~ \RR^{n} \times \{ 0 \}
\end{array} \right.
\end{eqnarray}
where $\phi_{0}$ satisfies $\{ \xvec : \phi_{0}(\xvec) =0 \} = \CC_{0}$. 
Assuming the computational grid comprises $N^{n}$ points, the total complexity of the first-order algorithm solving (\ref{eq:LSE}) is $\mathcal{O}(N^{n+1})$ when endowed with the usual CFL condition $\Delta t \propto 1/N$.
On the other hand, the Fast Marching Method (FMM) \cite{Sethian,SethianFMM,sethian1999level,SethianBookVariational,Tsitsi} may be used when $F=F(\xvec)\geq \delta >0$ and is therefore suited for monotone propagation. This approach reduces the dimensionality of the problem by building the \emph{first arrival time function} $\hat{\psi}: \RR^{n} \rightarrow \RR$, \ie $t=\hat{\psi}(\xvec)$ gives the unique time at which the front reaches $\xvec$. This function solves the boundary-value problem: 
\begin{eqnarray}
\label{eq:EikonalNoTime}
\left\{ \begin{array}{rcll} 
|\nabla\hat{\psi}| &=& \frac{1}{F} & \quad \mathrm{in}~ \mathcal{B}_{0} \\ 
\hat{\psi}(\xvec) &=& 0 & \quad \mathrm{on}~ \CC_{0}
\end{array} \right.
\end{eqnarray} 
where $\mathcal{B}_{0}$ is the unbounded component of $\RR^{n} \setminus \CC_{0}$ for $F>0$. 
The FMM solves
(\ref{eq:EikonalNoTime}) in $\mathcal{O}(N^{n}\log N^{n})$ time using a variant of Dijkstra's algorithm \cite{Dijkstra}. Schemes derived from the FMM include: an extension to the case where $F$ depends on time \cite{Vlad,VladThesis}; and the Generalized FMM \cite{GFMM}. Problem (\ref{eq:EikonalNoTime}) may also be solved using the Fast Sweeping Method \cite{FastSweeping}, which is a first-order accurate method running in $\mathcal{O}(N^{n})$ time. When $F\equiv1$, higher order methods exist, such as the one of Cheng and Tsai \cite{TsaiRedistancing} which exploits the relation between (\ref{eq:EikonalNoTime}) and the time-dependent Eikonal equation.

The GFMM of \cite{GFMM} allows $F$ to vanish and change sign albeit at the expense of the computational time. 
In \cite{Article1}, when $n=2$ the authors propose a scheme that can handle such speed functions \textsl{while} featuring a computational complexity comparable to that of the FMM. 
This approach considers the set $\MM := \{ (\xvec,t) : \xvec \in \CC_{t} \}$.
Each point $p \in \MM$ may be described as $p = (x,y,\hat{\psi}(x,y))$ and/or $p = (\tilde{\psi}(y,t),y,t)$ and/or $p = (x,\bar{\psi}(x,t),t)$. 
In regions where $|F|\geq \delta>0$, the algorithm builds $\hat{\psi}$ using the FMM, whereas near points where $F=0$, it solves a PDE satisfied by either $\tilde{\psi}$ or $\bar{\psi}$. 
Despite a number of satisfactory results, the mechanism to change representation is cumbersome, and $\MM$ is undersampled near regions where $F$ vanishes.

In the present paper, rather than limiting ourselves to the $n+1$ representations just mentioned, we now describe $\MM$ locally with functions of the form $\psi(\uvec)$ 
where the points $\uvec=(u_{1},\ldots , u_{n})$ belong to \emph{some hyperplane} lying in $\RR^{n} \times [0,T]$. 
Consider the orthonormal spanning set for this hyperplane consisting of $\{ \hat{u}_{1}, \ldots, \hat{u}_{n}\}$, and let the normal be $\hat{u}_{n+1}$. 
So that in this convention, any point $(\uvec,u_{n+1})$ of space-time $\RR^{n}\times [0,T]$ may be written as $(\uvec,u_{n+1})= u_{1}\hat{u}_{1} + \ldots + u_{n} \hat{u}_{n}+ u_{n+1} \hat{u}_{n+1}$.
Section \ref{sec:Hyperplane} shows that $\psi:\RR^{n} \rightarrow \RR$ solves a Dirichlet problem of the form:
\begin{eqnarray}
\label{DirichletProblem}
\left\{ \begin{array}{rcll}
H(\uvec,\psi(\uvec),\nabla \psi(\uvec)) &=& 0 & \mathrm{in}~ \UU \subset \RR^{n} \\ 
\psi(\uvec(\xvec,t)) &=& u_{n+1}(\xvec,t) & \mathrm{on}~ \xvec \in \CC_{t} \cap \mathcal{V} 
\end{array}
\right.
\end{eqnarray} 
for appropriate neighbourhoods $\UU$ and $\VV$. The \emph{Hamiltonian} function $H:\RR^{n} \times \RR \times \RR^{n} \rightarrow \RR$ depends on $\psi$ through the speed function $F$ and involves constants that capture the relative orientations of the two coordinate systems in use. 
Section \ref{sec:Discretization} presents the discretization of (\ref{DirichletProblem}) using finite-differences, as well as the design of a constrained minimization problem.
Section \ref{sec:Algorithms} discusses the different parts of the algorithm for the case $n=2$. The protocol we propose is similar to the FMM: Points sampling $\MM$ first belong to the \emph{narrow band} $\NB$ before moving to the \emph{accepted} set $\Aa$. 
When $p_{a} \in \NB$ is accepted, a local $\{ \hat{u}_{1}, \ldots, \hat{u}_{n+1}\} $-coordinate system is found. 
Using $p_{a}$ and another point in $\Aa$, a new point is computed through the finite-difference solvers and the optimization problem.  
Section \ref{sec:Properties} highlights the properties of the solvers and the global scheme. In particular, convergence of the local solvers is addressed and the total computational complexity is estimated. 
As is illustrated in \S \ref{sec:Examples} with numerous examples, the output of the algorithm is a \emph{directed graph}. Its meshsize is bounded below by a predetermined parameter $h$, and its vertices provide a discrete sampling of $\MM$. Given $t\in (0,T)$, the front $\CC_{t}$ can be recovered using interpolation.

Methods with a similar flavour have been explored in \cite{Runborg2014} which presents a high order fast interface tracking method, and \cite{guckenheimer2004fast} where the authors formulate $n$ `quasi-linear PDEs satisfied by the manifold's local parametrization' and `solve that system locally in an Eulerian framework'.

For clarity of illustration, the setting of our paper is $n=2$, so that $\xvec = (x_{1},x_{2}) = (x,y)$, $\uvec = (u_{1},u_{2}) = (u,v)$ and $u_{3}=w$. It is worth noting however that most of the underlying ideas, which are presented in \S \ref{sec:Hyperplane} and \S \ref{sec:Discretization}, easily extend to the general higher dimensional case. Current obstacles to the full generalisation of the method lie in the design of a practical interface tracker, able to capture the global features of the front at all times. The successful results of Sections \ref{sec:Algorithms}, \ref{sec:Properties} and \ref{sec:Examples} when $n=2$ provide a proof of concept that such a goal should be pursued in future work. 

Our approach offers numerous advantages when $n=2$. It extends previous work: If the local and global systems coincide then $\psi = \hat{\psi}$, whereas if $(\hat{u}_{1},\hat{u}_{2}) = (\hat{y},\hat{t})$ and $\hat{u}_{3}=\hat{x}$ then $\psi = \tilde{\psi}$. By construction, the transition from one representation to another is smooth, and the resolution of the sampling is regular. The initialization is almost identical to the main procedure, and does not require any information away from $\CC_{0}$. The accuracy of the scheme is $\mathcal{O}(h)$. If $\CC_{0}$ is sampled by $m$ points, the computational complexity is bounded by $CN$ where $C = \max \{ \mathcal{O}(m) , \mathcal{O}\left(10^{n+2}\right) \}$ with $N = \mathcal{O}(m^{n})$. When run on a monotone example, for the same computational time, our method yields a more accurate solution than the FMM.

\section{Hyperplane representation}
\label{sec:Hyperplane}

In this section, we derive Problem (\ref{DirichletProblem}) from the IVP (\ref{eq:LSE}) after making a few assumptions.
We then argue that the solution of (\ref{DirichletProblem}) locally describes $\MM$, and that a finite number of such solutions provides a covering of $\MM$. 

\subsection{Assumptions}
\label{sec:Assumptions}

The set $\mathcal{C}_{0}$ is a closed, co-dimension-one subset of $\Omega \subset \RR^{2}$ without boundary, where $\Omega$ is bounded. Moreover, $\CC_{0}$ is the graph of a $C^{1}$ function. The speed function $F: \mathbb{R}^{2} \times [0,T] \rightarrow \mathbb{R}$ is continuous. It may vanish and change sign. 
Under those assumptions, the LSE in (\ref{eq:LSE}) is a Hamilton-Jacobi equation with a unique continuous viscosity solution \cite{CrandallLions,UserGuideViscosity,crandall1984some}. It follows that $\MM$ embeds in $\RR^{2}\times [0,T]$ as a $C^{0}$ manifold. We denote as $\hat{\nu}(\xvec,t)$ the outward normal to $\MM$ at $(\xvec,t)$. 

\subsection{PDE on an arbitrary plane}
\label{subsec:PDE}

\begin{figure}
\begin{center}
\begin{subfigure}{.58\textwidth}
  \centering
  \includegraphics[width=\linewidth]{./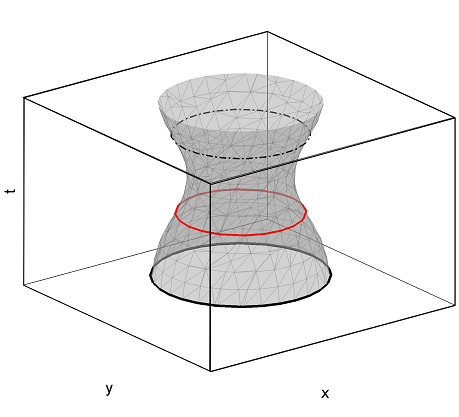}
  \caption{$\MM \subset \RR^{2}\times [0,T]$. The $xyt$-coordinate system is the global one.}
\end{subfigure}
\hspace{.2cm}
\begin{subfigure}{.38\textwidth}
  \centering
  \includegraphics[width=\linewidth]{./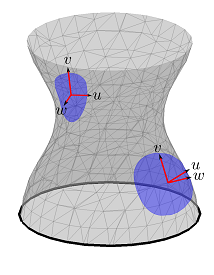}
  \caption{Two neighbourhoods and their $uvw$-coordinate systems.}
\end{subfigure}
\caption{Illustration of the notation for $n=2$.}
\label{fig:Hyperplane}
\end{center}
\end{figure}

Let a normal vector $\bar{\nu} \in S^{3}$ be given, and consider a corresponding plane lying in $xyt$-space. We denote as $\hat{u}\hat{v}\hat{w}$ the unique right-handed orthonormal coordinate system satisfying the following conditions: $\hat{w} = \bar{\nu}$, the $\hat{v}\hat{w}$-plane is vertical and the $\hat{t}$-component of $\hat{v}$ is positive. See Figure \ref{fig:Hyperplane}. We have: 
\begin{eqnarray}
\label{eq:Dictionary}
\left( \begin{array}{c} \hat{u} \\ \hat{v} \\ \hat{w} \end{array} \right)
= 
\left( \begin{array}{ccc} 
\alpha_{1} & \alpha_{2} & \alpha_{3} \\ 
 \beta_{1} & \beta_{2} & \beta_{3} \\ 
\gamma_{1} & \gamma_{2} & \gamma_{3}  \end{array} \right)
\left( \begin{array}{c} \hat{x} \\ \hat{y} \\ \hat{t} \end{array} \right) 
: = M
\left( \begin{array}{c} \hat{x} \\ \hat{y} \\ \hat{t} \end{array} \right) 
\end{eqnarray}
where the $\alpha$'s, $\beta$'s and $\gamma$'s are constants. In particular, $\alpha_{3} = 0$ and $\beta_{3}>0$ (see Appendix \ref{app:A} for details).
Let us now assume that a neighbourhood $\mathcal{V}$ of $(\xvec_{0},t_{0}) \in \MM$ may be represented as the graph of the $C^{1}$ function: 
\begin{eqnarray}
\psi: \RR^{2} \rightarrow \RR \qquad \qquad \psi: \uvec \mapsto \psi(\uvec)
\end{eqnarray}
If $\bar{\nu}\cdot \nthree(\xvec_{0},t_{0}) > 0$, then by the Implicit Function Theorem $\phi(\xvec,t) = w-\psi(\uvec)$ where $\phi$ is the solution of (\ref{eq:LSE}). Using implicit differentiation, we replace the derivatives appearing in (\ref{eq:LSE}) by
\begin{eqnarray}
\phi_{x_{i}} = \gamma_{i} - \alpha_{i}\psi_{u} - \beta_{i}\psi_{v} \quad i=1,2 \quad \mathrm{and} \quad
\phi_{t} = \gamma_{3} - \alpha_{3}\psi_{u} - \beta_{3}\psi_{v}
\end{eqnarray}
Using the orthonormality of the $\hat{u}\hat{v}\hat{w}$-coordinate system, (\ref{eq:LSE}) can be rewritten solely in terms of $\alpha_{3}$, $\beta_{3}$ and $\gamma_{3}$ as
\begin{eqnarray}
\label{eq:AnyPlane}
\left[ \gamma - \left( \alpha \psi_{u} + \beta \psi_{v} \right) \right] + G \sqrt{ 1 + \psi^{2}_{u} + \psi^{2}_{v} - \left[ \gamma - \left( \alpha \psi_{u} + \beta \psi_{v} \right) \right]^{2} } &=& 0
\end{eqnarray}
where the subscript $\cdot_{3}$ was dropped, and $G(\uvec,\psi(\uvec)) := F \left( \xvec(\uvec,\psi(\uvec)) , t(\uvec,\psi(\uvec)) \right)$ was introduced.
Equivalently, Equation (\ref{eq:AnyPlane}) can be rearranged using the vectors $\hat{R} := (\alpha,\beta,\gamma) $ and $\nu := (-\psi_{u},-\psi_{v},1)$, as: 
\begin{eqnarray}
\label{eq:PDEvector}
\nu \cdot \hat{R} +G \sqrt{\nu \cdot \nu - \left( \nu \cdot \hat{R} \right)^{2}} = 0
\end{eqnarray}
Note that this formulation is independent of dimension. In the event where $\bar{\nu}=\hat{\nu}(\xvec_{0},t_{0})$, the point $(\xvec_{0},t_{0})$ is a local maximum of $\mathcal{V}$, and we may explicitly relate the orientation coefficients to the speed function: 
\begin{eqnarray}
\label{eq:SpecialR}
\hat{R} = \left( ~0 ~,~ \frac{1}{\sqrt{1+F^{2}(\xvec_{0},t_{0})}} ~,~ \frac{-F(\xvec_{0},t_{0}) }{\sqrt{1+F^{2}(\xvec_{0},t_{0})}} \right)
\end{eqnarray}
We arrived at the following conclusion: Suppose $\psi$ satisfies the boundary condition:
\begin{eqnarray}
\psi(\uvec(\xvec,t^{0})) = w(\xvec,t^{0})  \qquad \mathrm{when}~ \xvec \in \CC_{t^{0}} \cap \mathcal{V} 
\end{eqnarray}
and solves (\ref{eq:PDEvector}) on $\UU \subset \RR^{2}$. Then letting $t^{\ast} := t(\uvec,\psi(\uvec))$ for $\uvec \in \UU$, we have that $ \xvec^{\ast} : = \xvec(\uvec,\psi(\uvec)) \in \CC_{t^{\ast}}$.


\subsection{Well-posedness} 
\label{subsec:Well}

Equation (\ref{eq:AnyPlane}) is a Hamilton-Jacobi equation of the form:
\begin{eqnarray}
\label{eq:DefinitionH}
H(\uvec,\psi,\nabla \psi;\hat{R}) 
\, := \, - \left[ \gamma - \beta \psi_{v} \right] - G(\uvec,\psi) \sqrt{ \psi^{2}_{u} + \left( \beta + \gamma \psi_{v} \right)^{2}} 
\, = \, 0
\end{eqnarray}
where we defined the \emph{Hamiltonian} $H:\RR^{2} \times \RR \times \RR^{2} \rightarrow \RR$. The well-posedness of this equation is guaranteed in the class of viscosity solutions \cite{achdou2013hamilton}. The following theorem justifies defining $H$ as the negative of the left-hand side of (\ref{eq:AnyPlane}). 

\begin{theorem}
\label{thm:ViscousSolution} The viscosity solutions of Equations (\ref{eq:LSE}) and (\ref{eq:DefinitionH}) are equivalent in the sense that the zero level-set of $\phi$ at various times $t = t(\uvec,w)$ is precisely the set of points $\xvec = \xvec(\uvec,w)$ for which $\psi(\uvec) =w$.
\end{theorem}

Note that this equivalence is investigated in the general case in Osher's paper \cite{OsherDirichlet}.

\begin{proof} If $\MM$ is $C^{1}$ at a point $\bar{p}$, then the manipulations of \S \ref{subsec:PDE} hold in the strong sense. Thus, let $\MM$ be singular at the point $\bar{p}$, and consider the second-order PDE: 
\begin{eqnarray}
\label{eq:LSEvanishing}
\phi^{\epsilon}_{t}+F|\nabla \phi^{\epsilon}| = \epsilon \Delta \phi^{\epsilon} \qquad \qquad \epsilon>0
\end{eqnarray}
\ie A viscous term was added to the right-hand side of the level-set equation. Define $\psi^{\epsilon} := w - \phi^{\epsilon}(u,v)$. Following the same reasoning as in \S \ref{subsec:PDE}, using the orthonormal relations, as well as the fact that $\alpha_{3}=0$, Equation (\ref{eq:LSEvanishing}) becomes:
\begin{eqnarray}
\left[ \gamma - \beta \psi_{v} \right] + G \sqrt{ \psi^{2}_{u} + \left( \beta + \gamma \psi_{v} \right)^{2}} &=& - \epsilon \left[ 
\psi^{\epsilon}_{uu} 
+ \left( \beta^{2}_{1} + \beta^{2}_{2} \right) \psi^{\epsilon}_{vv} \right]
\end{eqnarray}
Taking $\epsilon$ to zero, the arguments detailed for example in \S 1.5 of Lions' book \cite{LionsBook} apply to yield that the Hamilton-Jacobi equation of interest is (\ref{eq:DefinitionH}).
\end{proof}

In the light of this result, Equation (2.8) is preferred over (2.4) in the rest of this paper. 

\subsection{Geometric properties} We illustrate how $\MM$ can be globally described by a covering of $\psi$-functions. \\

\paragraph{$\psi$ provides a local representation of $\MM$}
Suppose that the solution of (\ref{DirichletProblem}) is such that $\VV := \UU \times \psi(\UU)$ is $C^{0}$. Let $t_{\min} = \inf \{ t : (\xvec,t)\in\VV \}$ and $t_{\max} = \sup \{ t : (\xvec,t)\in\VV \}$. 

\begin{theorem}
\label{thm:Equivalence}
For any given $t^{\ast} \in (t_{\min},t_{\max})$, we have: 
\begin{eqnarray*}
\{ \xvec \in \RR^{2} \,:\, \psi(\uvec(\xvec,t^{\ast})) = w(\xvec,t^{\ast}) \} \, = \, \CC_{t^{\ast}} \cap \VV
\end{eqnarray*}
\end{theorem}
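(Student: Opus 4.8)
The plan is to prove the two inclusions separately, after observing that almost all of the work is concentrated in showing that the graph surface $\VV$ lies inside $\MM$. Write $M$ for the linear isometry of (\ref{eq:Dictionary}), and recall that $\VV = \psi(\UU)$ is the image of $\uvec \mapsto \big(\xvec(\uvec,\psi(\uvec)),\, t(\uvec,\psi(\uvec))\big) = M^{-1}(\uvec,\psi(\uvec))$. Applying $M$ to the equality $(\xvec,t^{\ast}) = M^{-1}(\uvec_{0},\psi(\uvec_{0}))$ and using injectivity of $M$ shows that $(\xvec,t^{\ast}) \in \VV$ holds if and only if $\uvec(\xvec,t^{\ast}) \in \UU$ and $w(\xvec,t^{\ast}) = \psi(\uvec(\xvec,t^{\ast}))$. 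Hence the left-hand side of the asserted identity is exactly $\{\xvec : (\xvec,t^{\ast}) \in \VV\}$, the time-$t^{\ast}$ slice of $\VV$, and for $t^{\ast} \in (t_{\min},t_{\max})$ this slice is nonempty by connectedness of $\UU$ and continuity of $t\vert_{\VV}$, so the statement is not vacuous. With this reformulation, the inclusion $\supseteq$ is a tautology: if $\xvec \in \CC_{t^{\ast}}$ and $(\xvec,t^{\ast}) \in \VV$ then $\xvec$ lies in that slice. The inclusion $\subseteq$ is the substantive one: it asserts that every point of the $t^{\ast}$-slice of $\VV$ is a front point, that is, $\VV \subseteq \MM$.

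To establish $\VV \subseteq \MM$ I would make rigorous the formal computation of Section \ref{subsec:PDE}. Set $\Phi(\xvec,t) := w(\xvec,t) - \psi(\uvec(\xvec,t))$ on the open set $\{(\xvec,t): \uvec(\xvec,t) \in \UU\}$, so that $\{\Phi = 0\} = \VV$ there. Since $M$ is a fixed linear isometry, the correspondence $\phi \leftrightarrow \psi$ preserves the viscosity-solution property: a $C^{1}$ function touching $\Phi$ from above or below at $(\bar{\xvec},\bar t)$ corresponds, via composition with $M^{-1}$, to a test function touching $\psi$ at $\bar{\uvec} := \uvec(\bar{\xvec},\bar t)$, with the sub/superdifferential inequalities transformed exactly by $\phi_{x_{i}} = \gamma_{i} - \alpha_{i}\psi_{u} - \beta_{i}\psi_{v}$ and $\phi_{t} = \gamma_{3} - \alpha_{3}\psi_{u} - \beta_{3}\psi_{v}$. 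Because $\psi$ is a viscosity solution of $H(\uvec,\psi,\nabla\psi;\hat{R}) = 0$, the algebra carrying (\ref{eq:AnyPlane}) into (\ref{eq:LSE}) then shows $\Phi$ is a viscosity solution of $\Phi_{t} + F|\nabla\Phi| = 0$ — this is precisely the argument of Section \ref{subsec:Well}, now run for a general $\hat{R}$ rather than the special $\hat{R} = (0,0,-1)$, and in both time directions (the orientation reversal $\bar{\nu} \mapsto -\bar{\nu}$ turns a forward solution into a backward one, so no sign restriction on $F$ is needed). By the boundary condition in (\ref{DirichletProblem}), $\Phi \equiv 0$ on $\CC_{t^{0}} \cap \VV$; since $\phi$ from (\ref{eq:LSE}) satisfies $\{\phi = 0\} = \MM$ and agrees with $\{\Phi = 0\}$ on the slice $\{t = t^{0}\}$, the comparison principle for the level-set equation forces $\{\Phi = 0\} = \MM$ throughout the neighbourhood, i.e. $\VV \subseteq \MM$. (Alternatively, one may just quote the consequence recorded at the end of Section \ref{subsec:PDE}: for every $\uvec \in \UU$, $\xvec(\uvec,\psi(\uvec)) \in \CC_{t(\uvec,\psi(\uvec))}$.) Feeding this back: given $\xvec$ in the slice, choose $\uvec_{0} \in \UU$ with $M^{-1}(\uvec_{0},\psi(\uvec_{0})) = (\xvec,t^{\ast})$; then $\xvec = \xvec(\uvec_{0},\psi(\uvec_{0})) \in \CC_{t(\uvec_{0},\psi(\uvec_{0}))} = \CC_{t^{\ast}}$, which is $\subseteq$.

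The step I expect to be the main obstacle is the rigorous transfer of the viscosity property at singularities of $\psi$: the derivation in Section \ref{subsec:PDE} assumes $\bar{\nu}\cdot\nthree(\xvec_{0},t_{0}) > 0$ and invokes the Implicit Function Theorem, which may fail exactly where $\psi$ is not $C^{1}$ or where $\MM$ becomes ``vertical'' relative to $\bar{\nu}$; handling this requires using the $C^{0}$ hypothesis on $\VV$ together with the test-function characterization directly, and checking that the square-root Hamiltonian in (\ref{eq:DefinitionH}) transforms into $F|\nabla\Phi|$ as an identity of functions rather than only formally. A secondary difficulty is that (\ref{DirichletProblem}) is a boundary-value, not initial-value, problem, so the comparison argument must be localized to $\UU$ with the data transported across the slice $\{t = t^{0}\}$ using finite propagation speed and the well-posedness of (\ref{eq:LSE}) both forward and backward in time; restricting $t^{\ast}$ to the open interval $(t_{\min},t_{\max})$ is what keeps this propagation argument uniform and away from the degenerate extremal slices of $\VV$, where the graph is tangent to a level set of $t$.
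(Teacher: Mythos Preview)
The paper does not actually prove this theorem: it writes ``The proof follows the same argument as the proof of Theorem 4.2 in \cite{Article1}, and we therefore omit it.'' So there is no in-paper argument to compare against line by line. That said, your proposal is correct and is the natural argument the paper is gesturing at.

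Your reformulation --- that the left-hand side is precisely the $t^{\ast}$-slice of $\VV$, so that $\supseteq$ is tautological and $\subseteq$ reduces to $\VV\subseteq\MM$ --- is exactly right and is the heart of the matter. For the substantive inclusion $\VV\subseteq\MM$, note that the paper has already recorded this as the boxed ``conclusion'' at the end of \S\ref{subsec:PDE}: for every $\uvec\in\UU$, the point $\xvec(\uvec,\psi(\uvec))$ lies on $\CC_{t(\uvec,\psi(\uvec))}$. So your parenthetical ``alternatively, one may just quote the consequence recorded at the end of Section~\ref{subsec:PDE}'' is in fact the shortest and cleanest route, and is almost certainly what the omitted proof in \cite{Article1} amounts to once specialized to an arbitrary hyperplane. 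Your longer route via $\Phi(\xvec,t)=w(\xvec,t)-\psi(\uvec(\xvec,t))$ and the level-set equation is also fine; the point you flag about matching zero-level-sets rather than full solutions is handled by the standard geometric-consistency property of (\ref{eq:LSE}) (two solutions with the same initial zero set have the same zero set at all times), which is slightly more than the bare comparison principle but is well known. The worry about viscosity transfer under the linear change of variables $M$ is legitimate in principle but benign here: $M$ is a fixed orthogonal map, so test functions and touching points transform bijectively, and the algebra of \S\ref{subsec:Well} goes through for any $\hat{R}$, not just $(0,0,-1)$.
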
 The proof follows the same argument as the proof of Theorem 4.2 in \cite{Article1}, and we therefore omit it. \\

\paragraph{Covering of $\MM$} 
It follows from compactness that $\MM$ can be covered by a finite number of images of functions $\psi$, each solving a problem of the form (\ref{DirichletProblem}). The construction presented below builds one such covering that is particularly suited for algorithmic purposes. 
Given $\Delta t>0$ and $0 \leq t_{0} \leq T-\Delta t$, consider the strip 
\begin{eqnarray}
\label{StripDefinition}
\mathrm{St} : = \{ (\xvec,t) \in \MM \, : \, t_{0} \leq t \leq t_{0}+\Delta t \}
\end{eqnarray}

\begin{theorem}
\label{thm:Covering} Suppose that $\CC_{t_{0}}$ has codimension one in $\RR^{n}\times[0,T)$. 
There exists a finite covering of $\mathrm{St}$ consisting of the graphs of functions $\psi_{i}$, where each $\psi_{i}$ solves a Dirichlet problem of the form (\ref{DirichletProblem}) with boundary conditions imposed on $\CC_{t_{0}}$.
\end{theorem}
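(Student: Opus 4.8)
The plan is to combine the local representation result (Theorem \ref{thm:Equivalence}) with a compactness argument applied to the compact set $\mathrm{St}$. First I would observe that, by the assumptions in Section \ref{sec:Assumptions}, $\Omega$ is bounded and $\CC_{0}$ (hence $\CC_{t_0}$) is compact; since $F$ is continuous on the compact set $\RR^2\times[0,T]$ it is bounded, so the portion of $\MM$ lying in the time-slab $[t_0,t_0+\Delta t]$ is a closed and bounded subset of $\RR^{n}\times[0,T]$, i.e. $\mathrm{St}$ is compact. The idea is then to cover $\mathrm{St}$ by finitely many neighbourhoods of the type produced in Section \ref{subsec:PDE}, each of which is the image of a $\psi_i$ solving a Dirichlet problem of the form (\ref{DirichletProblem}).

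The heart of the argument is the construction of a suitable open cover. For each point $p=(\xvec_0,t_0')\in\mathrm{St}$, pick a normal direction $\bar\nu_p\in S^{n}$ with $\bar\nu_p\cdot\nthree(\xvec_0,t_0')>0$ — the natural choice being $\bar\nu_p=\nthree(\xvec_0,t_0')$ itself, which by (\ref{eq:SpecialR}) gives an explicit admissible $\hat R$. Since $\MM$ is a $C^0$ manifold and the graph condition $\bar\nu_p\cdot\nthree>0$ is an open condition, the Implicit Function Theorem setup in Section \ref{subsec:PDE} applies on some neighbourhood $\VV_p$ of $p$: there $\MM\cap\VV_p$ is the image of a function $\psi_p(\uvec)$ solving (\ref{eq:PDEvector}) on the corresponding $\UU_p$. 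The one subtlety is the boundary condition: Theorem \ref{thm:Covering} insists the Dirichlet data be imposed on $\CC_{t_0}$, not on an arbitrary slice $\CC_{t_0'}$. To handle this I would choose $\Delta t$ small enough (using the bound $|F|\le\|F\|_\infty$, which controls how far $\MM$ can travel in time $\Delta t$) that every such local patch $\VV_p$, possibly after shrinking, meets the slice $\{t=t_0\}$; then the trace of $\psi_p$ on $\CC_{t_0}\cap\VV_p$ furnishes the required boundary condition, and by Theorem \ref{thm:Equivalence} the resulting $\psi_p$ still represents $\MM\cap\VV_p$. Alternatively, one notes that the PDE (\ref{eq:PDEvector}) is first-order and its characteristics propagate the front, so data on $\CC_{t_0}$ determines the solution on all of $\UU_p$ provided $\UU_p$ is within characteristic reach of $\CC_{t_0}$ — again guaranteed by taking $\Delta t$ small.

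Having produced an open cover $\{\VV_p\}_{p\in\mathrm{St}}$ of the compact set $\mathrm{St}$, compactness extracts a finite subcover $\VV_{p_1},\dots,\VV_{p_k}$; the associated functions $\psi_1,\dots,\psi_k$ are exactly the claimed finite covering, each solving a problem of the form (\ref{DirichletProblem}) with boundary data on $\CC_{t_0}$. I would close by remarking that the codimension-one hypothesis on $\CC_{t_0}$ is what ensures $\MM$ near the slab is genuinely a graph-able hypersurface (so that the IFT step is not vacuous or degenerate), and that the same reasoning works for general $n$ with $S^3$ replaced by $S^{n}$.

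I expect the main obstacle to be the boundary-condition bookkeeping rather than the topology: reconciling "Dirichlet data on $\CC_{t_0}$" with patches $\VV_p$ centred at points with $t_0' > t_0$ requires either a smallness condition on $\Delta t$ tied to $\|F\|_\infty$, or an explicit characteristic-propagation argument showing that the solution on $\UU_p$ is the unique one extending the data from $\CC_{t_0}$. Making the quantitative choice of $\Delta t$ — and checking that the patches can be simultaneously shrunk so they all reach the initial slice while still covering $\mathrm{St}$ — is the step that needs the most care; the extraction of a finite subcover and the invocation of Theorems \ref{thm:Equivalence} and the construction of Section \ref{subsec:PDE} are then routine.
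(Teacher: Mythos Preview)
Your overall architecture---local graph representations plus compactness---is sound, but the route differs from the paper's in a way that matters. You cover $\mathrm{St}$ itself by point-centred patches $\VV_p$ and then try to pull each patch back to the slice $\{t=t_0\}$. The paper instead covers $\CC_{t_0}$ by small segments $s_i$ (chosen so the averaged normal $\nu_0^{\mathrm{av}}$ satisfies $\nthree(p)\cdot\nu_0^{\mathrm{av}}>0$ on each $s_i$), extracts a finite subcover of $\CC_{t_0}$ by compactness, and then defines each patch $\WW_i$ as the \emph{future domain of influence} of $s_i$ under the characteristics of the LSE. Since $\mathrm{St}=\bigcup_i\WW_i$ automatically, and each $\WW_i$ is by construction attached to $s_i\subset\CC_{t_0}$, the boundary-condition requirement is built in from the start. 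The smallness of $\Delta t$ is then used only to ensure that the averaged normal over $\WW_i$ stays close enough to $\nu_0^{\mathrm{av}}$ that the graph condition $\nthree\cdot\nu^{\mathrm{av}}_{\Delta t}>0$ persists throughout $\WW_i$.

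The difficulty you correctly flag---getting every patch to reach $\{t=t_0\}$---is exactly what the paper's organisation avoids. Your proposed fix (``choose $\Delta t$ small enough that every $\VV_p$, possibly after shrinking, meets $\{t=t_0\}$'') has a circularity: the points $p$, and hence the patches $\VV_p$ and their sizes, already depend on $\Delta t$ through the definition of $\mathrm{St}$, and there is no a~priori uniform lower bound on the radius of $\VV_p$ that would let you pick $\Delta t$ once and for all. Your alternative characteristic argument is in the right spirit and essentially converges to the paper's construction, but as stated it does not specify what the Dirichlet domain $\UU_p$ and boundary set are when $\VV_p$ does not itself meet $\{t=t_0\}$. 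Reorganising so that the finite cover lives on $\CC_{t_0}$ and the patches are \emph{defined} by forward propagation (rather than chosen first and connected backward) removes the circularity entirely.
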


\begin{proof}
Consider an open covering of $\CC_{t_{0}}$ by open segments $s$ with the following property: Defining
\begin{eqnarray}
\nu_{0}^{\mathrm{av}} := \frac{1}{|s|} \int \nthree \, ds 
\quad \mathrm{where} \quad |s| = \int ds 
\end{eqnarray}
for each $p\in s$ we have: $\nthree(p) \cdot \nu_{0}^{\mathrm{av}} > 0$. 
Since $\CC_{t_{0}}$ is compact, we may extract a finite open covering, say $\mathcal{S}_{0}= \{ s_{i} : i = 1,2,\ldots,I \}$. Fix $i$, and define \emph{the future domain of influence of $s_{i}$} as the subset of $\mathrm{St}$ satisfying: 
\begin{eqnarray*}
\mathcal{W}_{i} := &\{& 
(\xvec, t) 
\, : \, 
 \exists \, \{ (\xvec_{n},t_{n}) \}^{\infty}_{n=1} \rightarrow (\xvec, t) \, \mathrm{such~that~for~each~} \\ 
&\,& (\xvec_{n},t_{n}) \, \exists \mathrm{~a~characteristic~of~the~LSE~starting~in~} s_{i} \mathrm{~and~ending~at~} (\xvec_{n},t_{n}) \}  
\end{eqnarray*}
(See for example \cite{evans2010partial} for details on the method of characteristics.)
See Figure \ref{fig:Strips} for an illustration. We have $\mathrm{St} = \cup_{i=1}^{I} \mathcal{W}_{i}$. Define: 
\begin{eqnarray}
\nu^{\mathrm{av}}_{\Delta t} := \frac{1}{|\mathcal{W}_{i}|} \int \nthree \, d\mathcal{W}_{i} 
\quad \mathrm{where} \quad |\mathcal{W}_{i}| = \int d\mathcal{W}_{i}
\end{eqnarray}
Note that $\lim_{\Delta t \rightarrow 0} \nu^{\mathrm{av}}_{\Delta t} = \nu^{\mathrm{av}}_{0}$. Picking $\Delta t > 0$ small enough, it follows from the continuity of $\MM$ that for each $p\in\mathcal{W}_{i}$, we have: $\nthree(p)\cdot \nu^{\mathrm{av}}_{\Delta t} >0$. Consider the Dirichlet problem $H(\uvec, \psi, \nabla \psi; \hat{R}(\nu^{\mathrm{av}}))=0$ with boundary conditions imposed at $s_{i} \subset \CC_{t_{0}}$. From Theorem \ref{thm:Equivalence}, $\mathcal{W}_{i}$ is contained in the graph of $\psi$. 
\end{proof}

A global covering may then be obtained from decomposing $\MM$ into strips of the form (\ref{StripDefinition}) and extracting a finite subcover. 

\begin{figure}
\begin{center}
\begin{subfigure}{.48\textwidth}
  \centering
  \includegraphics[width=\linewidth]{./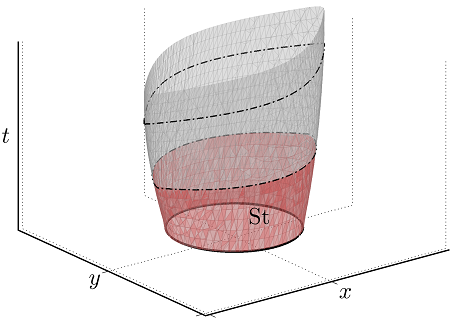}
\end{subfigure}
\hspace{.2cm}
\begin{subfigure}{.48\textwidth}
  \centering
  \includegraphics[width=\linewidth]{./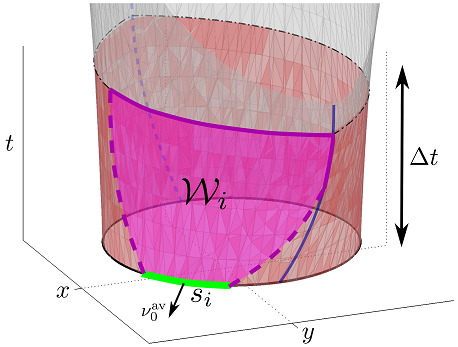}
\end{subfigure}
  \caption{Illustration of the notation in Theorem \ref{thm:Covering}. $\MM$ is $C^{0}$ along the blue line.}
\label{fig:Strips}
\end{center}
\end{figure}

\section{Discretization} 
\label{sec:Discretization}

Throughout this section, assume that the $uvw$-coordinate system is \emph{fixed}. Suppose we are given two points belonging to $\MM$ of the form $p_{a} = (\uvec_{a},w_{a})$ and $p_{b} = (\uvec_{b},w_{b})$; and wish to compute $w_{d} = \psi(\uvec_{d})$ at a location $\uvec_{d} = (u_{d},v_{d})$. We will refer to $p_{a}$ and $p_{b}$ as the \emph{parents} of the \emph{child} point $p_{d} = (\uvec_{d},w_{d})$. In \S \ref{subsec:SolvingPDE}, we first assume that $\uvec_{d}$ is \emph{given} and propose two different solvers that take as input $p_{a}$, $p_{b}$ and $\uvec_{d}$, and return a value $w_{d}$. Then in \S \ref{subsec:OptimalSampling}, we turn to the question of how $\uvec_{d}$ should be determined. The answer takes the form of a \emph{constrained minimization problem}, for which we propose two different methods. 

Section \ref{sec:Discretization} thus provides a framework for solving the problem locally. Global issues also need to be addressed -- See \S\ref{subsec:GlobalConstraints}.

\subsection{Solving the PDE}
\label{subsec:SolvingPDE}

As illustrated in Figure \ref{fig:DefineSandT} define: 
\begin{eqnarray}
\vec{s}_{a} := \uvec_{d} - \uvec_{a} 
\qquad
\vec{s}_{b} := \uvec_{d} - \uvec_{b} 
\qquad
\vec{s}_{c} := \uvec_{d} - \uvec_{c} 
\end{eqnarray}
and let $\hat{s}_{i} = ((s_{i})_{u},(s_{i})_{v})$ where $i=a,b,c$ be the corresponding \emph{unit} vectors. Then 
\begin{eqnarray}
\label{eq:ApproxDerivatives}
\psi_{i} = \frac{w_{d}-w_{i}}{|\vec{s}_{i}|}
\qquad \mathrm{where} ~ i=a,b,c
\end{eqnarray}
provide $\first$ order approximations of the directional derivatives of $\psi$ at $\uvec_{d}$. 
Making use of the calculus identity: $\psi_{k} = \nabla \psi \cdot \hat{k}$ for any unit vector $\hat{k}$, we have:
\begin{eqnarray}
\label{Rotation}
\left( \begin{array}{c}
\psi_{u} \\
\psi_{v} 
\end{array} \right)
= 
\left( \begin{array}{cc}
(s_{a})_{u} & (s_{a})_{v} \\
(s_{i})_{u} & (s_{i})_{v} 
\end{array} \right)^{-1}
\left( \begin{array}{c}
\psi_{a} \\
\psi_{i} 
\end{array} \right)
=: B^{-1}
\left( \begin{array}{c}
\psi_{a} \\
\psi_{i} 
\end{array} \right)
~ \mathrm{where} ~ i=b,c
\end{eqnarray}
provided that $\hat{s}_{a}$ and $\hat{s}_{i}$ are not colinear. We write
\begin{eqnarray}
\label{eq:ApproxDeriv}
\tilde{\nu} = -\vec{M} \, w_{d}-\vec{N} 
\quad \mathrm{with} \quad 
\vec{M} := (m_{u},m_{v},0) 
\quad \mathrm{and} \quad 
\vec{N} := (n_{u},n_{v},-1)
\end{eqnarray}
where the constants $m_{u}$, $m_{v}$, $n_{u}$ and $n_{v}$ depend on $(\uvec_{a},w_{a})$, $(\uvec_{i},w_{i})$ where $i=b$ or $c$, and $\uvec_{d}$. The quantity $\tilde{\nu}$ provides a $\first$ order approximation of $\nu$. 

\begin{figure}
		\centering
			\includegraphics[width=0.8\textwidth,natheight=493,natwidth=334]{./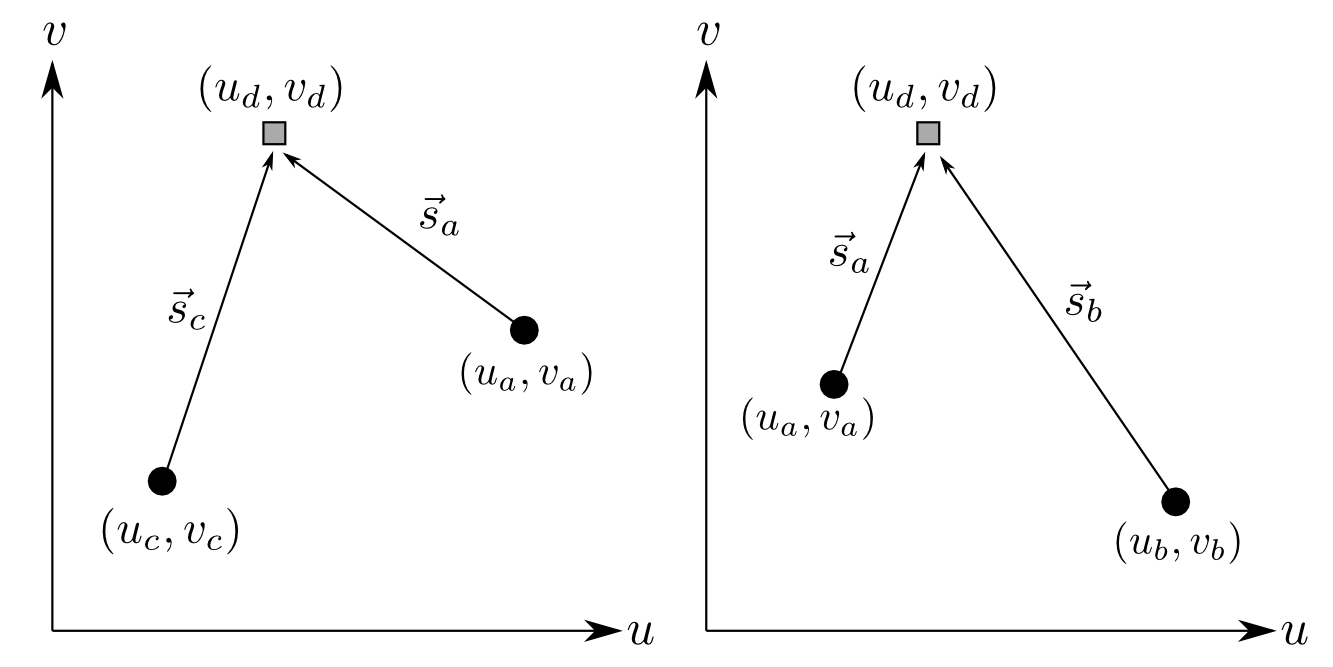}
		\caption{The points $p_{a}=(u_{a},v_{a},w_{a})$, $p_{b}=(u_{b},v_{b},w_{b})$ and $p_{c}=(u_{c},v_{c},w_{c})$ belong to $\Aa$.  Given $\uvec_{d}=(u_{d},v_{d})$, we wish to compute $w_{d}$.}
	\label{fig:DefineSandT}
	\end{figure}

\subsubsection{Direct solver}  
\label{subsubsec:DirectSolver}

Consider approximating $G$ by setting it equal to $G_{0} = G(p_{a})$. 
Note that this is independent of $w_{d}$, which implies that the relation 
\begin{eqnarray}
\label{eq:RelationAbove}
\tilde{\nu} \cdot \hat{R} + G_{0} \sqrt{\tilde{\nu} \cdot \tilde{\nu} - \left( \tilde{\nu} \cdot \hat{R} \right)^{2}} = 0
\end{eqnarray}
can be rearranged as a \emph{quadratic} in $w_{d}$. Indeed, first rewriting (\ref{eq:RelationAbove}) as
\begin{eqnarray}
\left( \tilde{\nu}\cdot \hat{R} \right)^{2} \left( 1+G_{0}^{2} \right) = G_{0}^{2} \left( \tilde{\nu}\cdot \tilde{\nu} \right)
\end{eqnarray}
and then making use of (\ref{eq:ApproxDeriv}), we arrive at 
\begin{eqnarray}
\left( \tilde{\nu}\cdot \hat{R} \right)^{2}  
= k_{1} w_{d}^{2} + 2 k_{2} w_{d} + k_{3} 
\qquad \mathrm{and} \qquad 
\tilde{\nu}\cdot \tilde{\nu}
= k_{4} w_{d}^{2} + 2 k_{5} w_{d} + k_{6}
\end{eqnarray}
where
\begin{eqnarray}
\label{eq:Coeff}
\begin{array}{lll}
k_{1} = \left( \hat{R} \cdot \vec{M} \right)^{2} &
k_{2} = \left( \hat{R} \cdot \vec{M} \right) \left( \hat{R} \cdot \vec{N} \right) &
k_{3} = \left( \hat{R} \cdot \vec{N} \right)^2 \\
k_{4} = \vec{M}\cdot \vec{M} &
k_{5} = \vec{M}\cdot \vec{N} &
k_{6} = \vec{N}\cdot \vec{N}
\end{array}
\end{eqnarray}
Rearranging further yields 
\[
\left( k_{1} + G_{0}^{2} (k_{1} - k_{4}) \right) w_{d}^{2} + 2 \left( k_{2} + G_{0}^{2} (k_{2} - k_{5}) \right) w_{d} + \left( k_{3} + G_{0}^{2} (k_{3} - k_{6}) \right)  = 0 \]
The discriminant of this quadratic reads:
\begin{eqnarray}
0+\rho_{1}G_{0}^{2} + \rho_{2}G_{0}^{4} = G_{0}^{2} \left( \rho_{1}+\rho_{2}G_{0}^{2} \right)
\end{eqnarray}
where $\rho_{1} = - 2k_{2}k_{5}  + k_{1}k_{6} + k_{3}k_{4}$ and $\rho_{2} = \rho_{1} +  k^{2}_{5} - k_{4}k_{6}$. We set  
\begin{eqnarray}
\label{SolutionQuadratic}
w_{d}
&=& \frac{ -\left( k_{2} + G^{2}_{0} (k_{2} - k_{5}) \right) + G_{0} \sqrt{ \rho_{1}+\rho_{2}G_{0}^{2} }}{\left( k_{1} + G^{2}_{0} (k_{1} - k_{4}) \right)} 
\end{eqnarray}
Since $\mathrm{sign}(\gamma) = -\mathrm{sign}(G_{0})$, this choice of root minimizes $t_{d} = \alpha u_{d} + \beta v_{d} + \gamma w_{d}$ where $\alpha = 0$ and $\beta>0$. This is consistent with the control theoretical approach to this problem \cite{EvansNotes,FalconeMin,Tsitsi,Vlad}.
If $0 = k_{1}+G_{0}^{2}(k_{1}-k_{4})$, the quadratic relation degenerates to a linear one with solution: 
\begin{eqnarray}
\label{SolutionLinear}
w_{d} = -\frac{k_{3} + G_{0}^{2} (k_{3} - k_{6})}{2 \left( k_{2} + G_{0}^{2} (k_{2} - k_{5}) \right)}
\end{eqnarray}
The outward normal to $\MM$ at $p_{d}$ can then be computed as $\nthree_{d} = \tilde{\nu}/|\tilde{\nu}|$. 

\textit{Remark:} As was already mentioned in \S \ref{subsec:Well}, when $\hat{R} = (0,0,\pm1)$, the PDE reduces to the Eikonal equation. If working on a Cartesian grid, formulas (\ref{SolutionQuadratic}) and (\ref{SolutionLinear}) can be verified to agree with the solvers used in the traditional FMM.

\subsubsection{Iterative solver} 
\label{subsubsec:IterativeSolver}

An alternative to approximating the speed term $G$ as in \S \ref{subsubsec:DirectSolver} is to solve (\ref{eq:PDEvector}) iteratively in pseudo-time $\tau$. \ie Let
\begin{eqnarray}
\tilde{\nu}^{n} := - \vec{M} \, w^{n}_{d} - \vec{N}
\qquad \mathrm{and} \qquad
G^{n}:= F( \, \xvec (\uvec_{d},w^{n}_{d}), t(\uvec_{d},w^{n}_{d}) \, )
\end{eqnarray}
and iterate:
\begin{eqnarray}
\label{eq:IterativeSolver}
\tilde{\nu}^{n} \cdot \hat{R} + G^{n} \sqrt{\tilde{\nu}^{n} \cdot \tilde{\nu}^{n} - \left( \tilde{\nu}^{n} \cdot \hat{R} \right)^{2}} = \frac{w^{n+1}_{d}-w^{n}_{d}}{\Delta \tau}
\end{eqnarray}
until $|w^{n+1}_{d}-w^{n}_{d}|$ is below some pre-determined tolerance.
The direct solver can be used to initialize $w^{0}_{d}$. The size of the pseudo-time step is determined based on Definition 5 of \cite{Oberman} -- see Appendix \ref{app:IterativeSolver} for details.

\subsection{Optimal sampling} 
\label{subsec:OptimalSampling}

Suppose that a set $\Aa$ of points sampling a part of $\MM$ is available. Given $p_{a}$, $p_{i} \in \Aa$, we now consider \emph{where} to compute a new point $p_{d}$. Constraints are heuristically imposed so as to efficiently build a regular sampling of $\MM$. Rigorous justifications of these choices are provided in \S \ref{sec:Properties} where the properties of the scheme are analyzed. 

\subsubsection{Constrained minimization problem} 

\paragraph{Accuracy} According to (\ref{eq:ApproxDerivatives}), the accuracy of either solver increases as $|\vec{s}_{a}|$ and $|\vec{s}_{i}|$ become smaller. 
We combine those requirements into a single function to be minimized:
\begin{eqnarray*}
f(\uvec_{d}) := |\vec{s}_{a}|^{2}+ |\vec{s}_{i}|^{2} 
\end{eqnarray*} 
The result is that the child point $p_{d}$ should lie close to its parents. However, we now argue that it should not be \emph{too} close. 

\paragraph{Evenness of the sampling} 
Repulsion between $p_{d}$ and its parents is introduced to avoid oversampling parts of $\MM$. 
The minimum three-dimensional (or $(n+1)$-dimensional) distance between $p_{d}$ and any other point in $\Aa$ should be at least $h$, where $h$ is a predetermined parameter. As will be clear in \S \ref{sec:Examples}, $h$ bounds the meshsize of the resulting graph from below.

\paragraph{Characteristic structure} Lastly, we make sure that $p_{d}$ does not violate the characteristic structure of the solution: $w_{d}$ must be real, and in addition $p_{d}$ has to satisfy $t_d \geq \max \{ t_a, t_i \}$ for causality to hold. We impose the more stringent condition: 
\begin{eqnarray*}
t_d \geq \max \{ t_a, t_i \} + \Delta t
\quad \mathrm{where} \quad
\Delta t = \frac{h}{\sqrt{G^{2}(\uvec_{j})+1}}
\end{eqnarray*} 
with $j=a$ if $\max \{ t_a, t_i \} = t_a$ and $j=i$ otherwise. 
This choice of $\Delta t$ speeds up computations and simplifies our analysis, as will be evident in \S \ref{subsubsec:OptimizationProperties} where $\Delta t = \beta h$.

\paragraph{Optimal sampling -- Summary} 
Given a pair of points $p_{a}$, $p_{i} \in \Aa$, we wish to minimize the objective function
\begin{align}
\label{ACCURACY}
f(\uvec_{d}) &= |\vec{s}_{a}|^{2} + |\vec{s}_{i}|^{2} 
\tag{A}
\end{align}
subject to the constraints: 
\begin{align}
\label{VALID1}
g_{1}(\uvec_{d}) &= \Im(w_{d}) = 0 
\tag{V1} 
\\ 
\label{VALID2}
g_{2}(\uvec_{d}) &= t_{d} - \max \{ t_a, t_i \} \geq \Delta t
\tag{V2} \\
\label{EVEN}
g_{3}(\uvec_{d}) &= \min_{ p \in \Aa } \{ \|p_{d} - p \| \} \geq h
\tag{E} 
\end{align}

\subsubsection{Solving the constrained minimization problem}
\label{subsubsec:SolvingOpt}

Although the objective function is simple, the constraints are non-linear functions of $\uvec_{d}$. Unlike the other two, constraint (\ref{VALID1}) does not require evaluating $w_{d}$. If using the direct solver, it amounts to checking the sign of the discriminant. Constraint (\ref{VALID2}) requires conversion of the data to $\xvec t$-coordinates. Constraint (\ref{EVEN}) can be very costly to verify if $\Aa$ is large. This is why it is preferable to work with a predetermined small subset of $\Aa$. 
To be efficient, a scheme solving this problem should not require too many evaluations of the constraints. We briefly discuss two possible methods, and relegate the details of the procedure to Appendix \ref{app:GridMethod}.

\paragraph{The grid method} This simple yet efficient approach is the one we use in practice. The problem is solved using grids sampling a neighbourhood of $p_{a}$ and $p_{i}$ in the $uv$-plane. The objective function $f$ is computed at those points where all three constraints are satisfied. The location of the minimum of $f$ is found, and a finer grid centered at this point is defined. The procedure is repeated until convergence \eg The change in the value of the minimum is below some pre-defined tolerance.

\paragraph{The Lagrangian method} Following \cite{bertsekas1996constrained} (\S 2.1, 2.2, 3.1), the augmented Lagrangian $L$ is defined by adding the three constraints to $f$ along with Lagrange multipliers $\mu_{i}$ and penalty coefficients $c_{i}>0$, where $i=1,2,3$. The following procedure is iterated until convergence \eg The change in the value of the minimum is below some pre-defined tolerance. First, for some values of the $c_{i}$ and $\mu_{i}$, the unconstrained minimum of $L$ is found. This can be done using BFGS along with line minimization \cite{NumericalRecipes}. Then the $c_{i}$'s are increased and the $\mu_{i}$'s are updated. Although convergence is guaranteed, this method requires a large number of evaluations of the constraints, which makes it too slow for our purposes.

\section{Algorithms}
\label{sec:Algorithms}

We present the algorithms used to generate the results in \S \ref{sec:Examples}.

\subsection{Initialization} Little information is required to initialize the algorithm.
The input is a sampling of $\CC_{0}$ consisting of $m$ points and $m-1$ undirected segments. The normal $\nthree$ at each point and a Final Time must also be provided.

Store those pieces of information into a list, say $\mathcal{L}$. In the following, we will assume that each row of $\mathcal{L}$ contains data pertaining to one point, \eg if $p=(\xvec,t)=(x_{1},x_{2},\ldots,t)$ has normal $\nthree(\xvec,t)$, then $\mathcal{L}$ might initially look like:
\begin{eqnarray*}
\begin{array}{c|c|c|c|c|c|c|c|c|c}
~ & x_{1} & x_{2} & \ldots & t & \hat{\nu}_{1}(\xvec,t) & \hat{\nu}_{2}(\xvec,t) & \ldots & \hat{\nu}_{n+1}(\xvec,t) \\ 
\hline \hline
 \mathrm{row~1}  & -.1625 & 0.0080 & \ldots & 0.0342 \times 10^{-15} &  -.1634 & 0.301 & \ldots & -0.5668 \\ 
 \mathrm{row~2}  & -.1705 & 0.0099 & \ldots & 0.0398 \times 10^{-15} &  -.1706 & 0.385 & \ldots & -0.5909 \\ 
~  & ~ & ~ & ~ & \vdots & ~ & ~ & ~ & ~ \\ 
\end{array}
\end{eqnarray*}
The variables $M_{\NB}$ and $M_{\Acc}$ help monitor the number of points in $\NB$ and $\Acc$, respectively. See Algorithm \ref{algo:Init}.
\begin{algorithm}[h!]
\caption{Initialization}\label{algo:Init}
\begin{algorithmic}[1]

	\State $\Acc \gets \mathcal{L}$, $M_{\Acc} \gets m+1$
	\State $\NB \gets \mathcal{L}$, $M_{\NB} \gets m+1$
	\State $h \gets \left( \min_{p, \, q \in \mathcal{L} } \| p-q \| \right) /2$. 

\end{algorithmic}
\end{algorithm}

\subsection{Get a local representation}
This routine is called when a point $p_{a}$ is accepted to go from a global to a local representation. See Algorithm \ref{algo:Local}.

\begin{algorithm}[h!]
\caption{Get a local representation}\label{algo:Local}
\begin{algorithmic}[1]
		\State Find the $L$ closest neighbours of $p_{a}$ among $\tilde{\Acc}$ and form the list $\mathcal{S}$. Set $\tilde{\Acc} = \mathcal{S}$.		
		\State Find $\bar{\nu}$ such that $\bar{\nu}\cdot \nthree(p_{i})>0$ for each $p_{i} \in \mathcal{S}$. The default value is $\bar{\nu} = \nthree_{a}$. 
		\State Compute the $uvw$-coordinates of the points in $\mathcal{S}$ using relation (\ref{eq:Dictionary}). 
		\State Find the point $p_{b} \in \mathcal{S}$ lying closest to $p_{a}$ with $u_{b}-u_{a}>h/2$.
		\State \textbf{Return:} $\tilde{\Acc}$, the change of coordinates matrix $M$, and $p_{b}$. 
\end{algorithmic}
\end{algorithm}

\subsection{Computing a new point} Once the minimum $\uvec_{d}$ is found using the direct solver, the iterative solver is run to improve the accuracy of the solution $w_{d}$. See Algorithm \ref{algo:ConsOpt}.
\begin{algorithm}[h!]
\caption{Compute a new point}\label{algo:ConsOpt}
\begin{algorithmic}[1]

	\State $\Delta t = h/\sqrt{G^{2}(\uvec_{a})+1}$, ~$p_{d} = \mathbf{0}$, ~ $\nthree_{d} = \mathbf{0}$.

	\State Solve the constrained optimization problem defined by $p_{a}$, $p_{i}$, $G(\uvec_{a})$, $h$ and $\Delta t$ to get $\uvec_{d}$. See \S \ref{subsec:OptimalSampling} and Appendix \ref{app:GridMethod} for details. 

	\If{a solution to the optimization problem was found}
		\State Compute $w_{d}$ associated with $\uvec_{d}$ using the iterative solver detailed in \S \ref{subsubsec:IterativeSolver}.
		\State Compute the normal $\nthree_{d}$ at $(\uvec_{d},w_{d})$. 
		\State Get $p_{d}$ and $\nthree_{d}$ in $(\xvec,t)$-coord. using the change of coordinates matrix $M$.
	\EndIf 
	
\State \textbf{Return:} $[p_{d}$; $\nthree_{d}]$.
\end{algorithmic}
\end{algorithm} 

\subsection{Global constraints}
\label{subsec:GlobalConstraints}

The global features of the manifold are monitored with the help of the structure \Book \, which consists of a list of segments. For example, if $\NB$ has the form: 
\begin{eqnarray}
\NB = \left[ p_{1}; p_{2}; \ldots; p_{M_{\NB}} \right] ^{T} 
\quad \mathrm{then} \quad 
\mathrm{\Book} = \left[ p_{2}-p_{5}; \, p_{11}-p_{23}; \,\ldots; \, p_{14}-p_{8} \right]^{T} 
\end{eqnarray}
Segments are not directed, \ie $[p_{1}-p_{2}]$ is the same as $[p_{2}-p_{1}]$. Initially, the collection of segments lies in the $xy$-plane and provides a piecewise linear approximation of $\CC_{0}$. We impose the following constraints on \Book.

\noindent \bull \textsl{Multiplicity:} Each point $p \in \NB$ appears exactly twice in \Book.

\noindent \bull \textsl{Loops:} A segment cannot start and end at the same point. A segment can only appear once in \Book. 

\noindent \bull \textsl{Intersections:} Segments cannot intersect.  

\noindent \bull \textsl{Spikes:} The acute angle formed by two segments sharing an endpoint cannot be less than $\approx 0.2\pi$. 

The situations are illustrated with simple examples on Figures \ref{fig:Multiplicity} - \ref{fig:Spikes}. The framed subfigures are obtained after connecting \emph{hanging nodes} which are nodes that appear only once in \Book. This is done as follows:

\paragraph{Stitching hanging nodes} Given a hanging node $p_{1} = (\xvec_{1},t_{1})$, let $p_{2}=(\xvec_{2},t_{2})$ be the hanging node that minimizes the $xy$-distance $\|\xvec_{1}-\xvec_{2} \|$. Then add the segment $[p_{1}-p_{2}]$ to \Book. Repeat until there are no hanging nodes left. 

Those global constraints are imposed to $\NB$ in Algorithm \ref{algo:BookKeeping}. Note that we omit the details of the updating procedure of \Book, which consists in relabelling nodes and possibly deleting some segments. See \cite{MyThesis} for details. 

\textsl{Remark:} When a new point is computed by Algorithm \ref{algo:ConsOpt}, the checks performed at lines 7 and 8 need only involve the segments attached to that new point, which requires an $\mathcal{O}(1)$ number of operations.

\begin{algorithm}[h!]
\caption{Book keeping of $\NB$}\label{algo:BookKeeping}
\begin{algorithmic}[1]

\If{$M_\NB>1$}
	\If{a new point was computed by Algorithm \ref{algo:ConsOpt}}
		\State Update \Book.
	\Else
		\State Update and clean \Book.
	\EndIf	

	\State Check intersections, and clean \Book \, if a point was removed from $\NB$.
	\State Check spikes, and clean \Book \, if a point was removed from $\NB$.

\EndIf

\begin{center} \rule{8cm}{0.4pt} \end{center}

\Procedure{Clean \Book.}{}
	\Do
\State Multi $= 0$; Loop $= 0$; Inter $= 0$;
		\State Check multiplicity, and set Multi to 1 if a point was removed from $\NB$. 
		\State Check loops, and set Loop to 1 if a point was removed from $\NB$. 
		\State Check intersections, and set Inter to 1 if a point was removed from $\NB$. 
	\doWhile{Multi+Loop+Inter $> 0$}
\EndProcedure

\end{algorithmic}
\end{algorithm}

\begin{figure}
  \centering
  \includegraphics[width=\linewidth]{./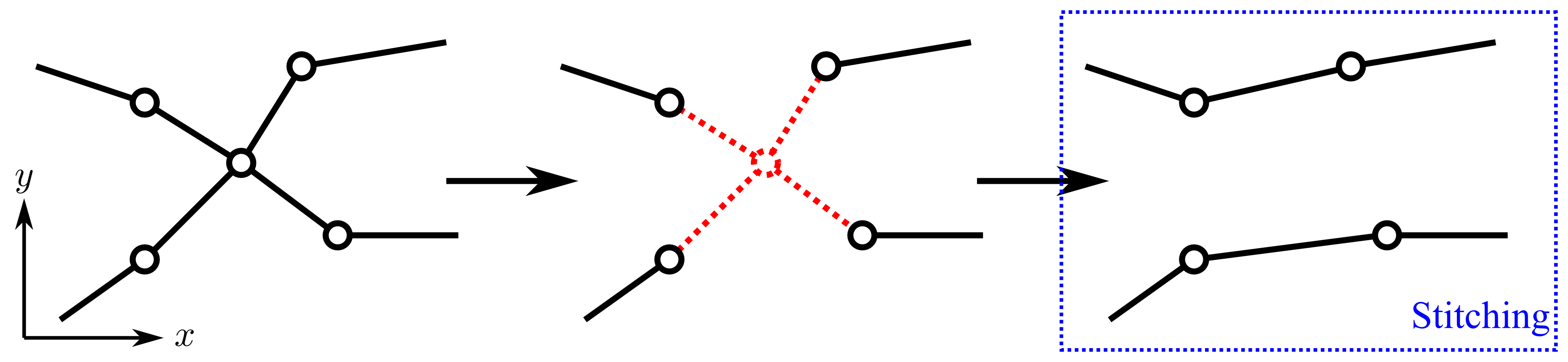}
  \caption{Multiplicity.}
	\label{fig:Multiplicity}
\end{figure}

\begin{figure}
  \centering
  \includegraphics[width=\linewidth]{./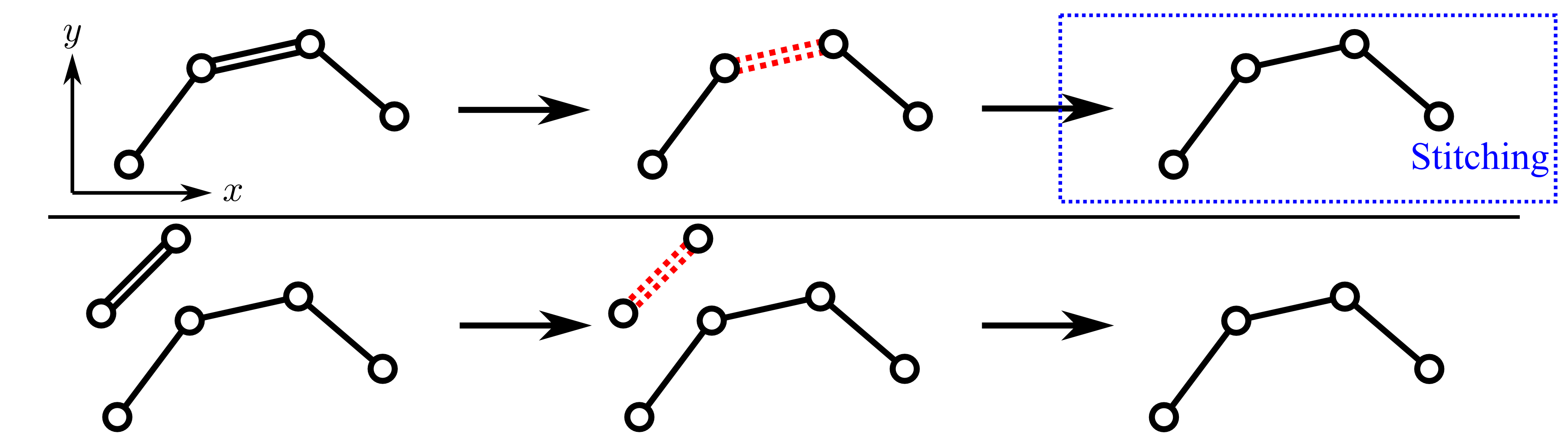}
  \caption{Loops.  Once the double-loop is removed, the nodes are either stitched (top situation) or removed from $\NB$ (bottom situation).}
	\label{fig:Loops}
\end{figure} 

\begin{figure}[b]
  \centering
  \includegraphics[width=\linewidth]{./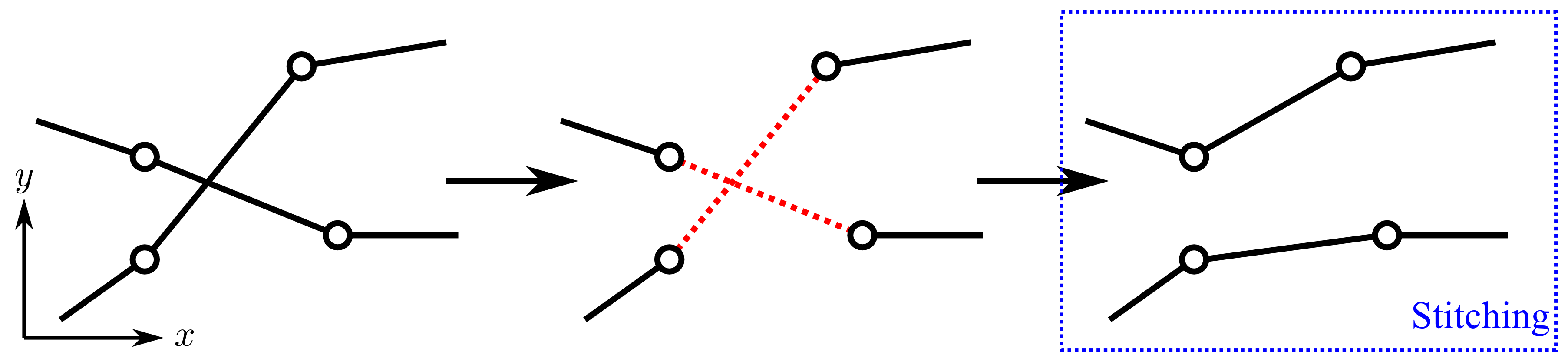}
  \caption{Intersections. The final configuration must differ from the original one.}
	\label{fig:Intersections}
\end{figure} 

\begin{figure}[b]
  \centering
  \includegraphics[width=\linewidth]{./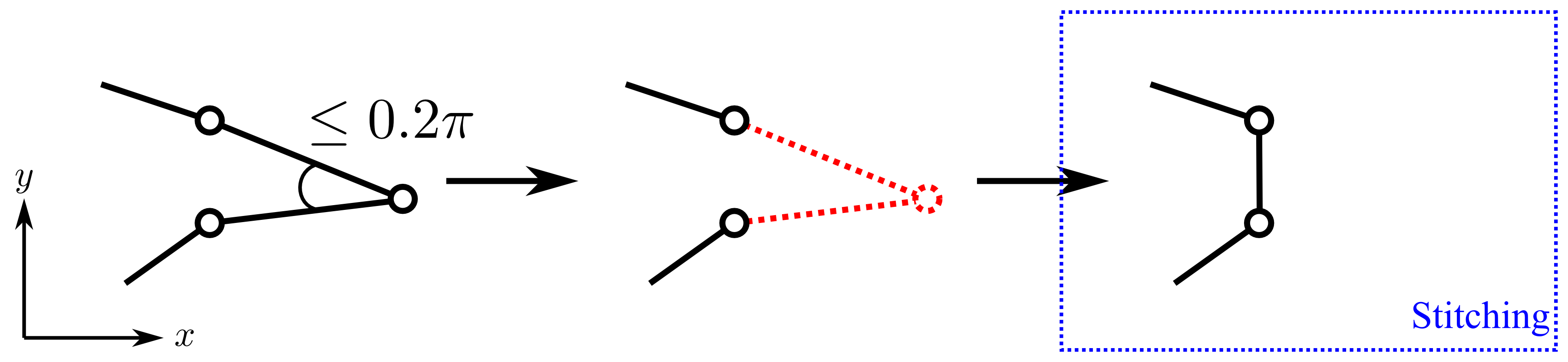}
  \caption{Spikes}
	\label{fig:Spikes}
\end{figure}

\clearpage

\subsection{Main loop}

A couple of remarks on Algorithm \ref{algo:MainLoop} are in order.
Lines 5-7 are ignored during the first $m$ iterations of the Main Loop: This can be considered as the end of the initialization procedure. 
The fact that the algorithm does not compute a new value when $t_{a}>$ Final Time implies that $\NB$ is eventually empty, at which point the algorithm naturally ends. 
If Algorithm \ref{algo:ConsOpt} fails, next-to-nearest neighbours are used (line 14). Since $|\tilde{\Acc}|\leq L$, this occurs at most $L$ times. Here, we present the algorithms with $i=b$. The procedure is unchanged if $i=c$.

\begin{algorithm}
\caption{Main Loop}\label{algo:MainLoop}
\begin{algorithmic}[1]

\State counter = 1
\While{$\NB$ is not empty} 

	\State Find $p\in\NB$ with the smallest $t$ value, call it $p_{a}=(\xvec_{a},t_{a})$.
	\State Remove $[p_{a};\nthree_{a}]$ from $\NB$. $M_{\NB} \gets M_{\NB}-1$
	\If{counter$>m$}
		\State Add $[p_{a};\nthree_{a}]$ to $\Acc$ in the $M_{\Acc}$-th row. $M_{\Acc} \gets M_{\Acc}+1$
	\EndIf 

	\If{$t_{a} <$ Final time} 
		\State Set $\tilde{\Acc} = \Acc$ and get a local representation using Algorithm \ref{algo:Local}.
		\State Run Algorithm \ref{algo:ConsOpt} with the pair $p_{a}\,$-$\,p_{b}$ to get $p_{d}$, $\nthree_{d}$.
		\If{Algorithm \ref{algo:ConsOpt} was successful at finding a new point}
			\State Add $[p_{d};\nthree_{d}]$ to $\NB$ in the $M_{\NB}$-th row. $M_{\NB} \gets M_{\NB}+1$
		\Else
			\State Remove $[p_{b};\nthree_{b}]$ from $\tilde{\Acc}$, find a new $p_{b} \in \tilde{\Acc}$, and go back to line 10. 
		\EndIf
	\EndIf 

\State Run Algorithm \ref{algo:BookKeeping} to impose the global constraints on $\NB$. 

\State counter $\gets$ counter+1
\EndWhile

\end{algorithmic}
\end{algorithm}

\section{Properties of the scheme}
\label{sec:Properties}
We first discuss local properties of the scheme by approximating the solution of the optimization problem, and asserting the convergence of the solvers. We then turn to global properties. 

\subsection{Local properties}
\label{subsec:LocalProperties}

Let us assume that $p_{a}$ and $p_{i} \in \Aa$ are exact, \ie $p_{a}$, $p_{i} \in \MM$; and that $\|p_{a}-p_{i} \| = \mathcal{O}(h)$. Moreover, consider that we are working on a neighbourhood where $\psi$ is $C^{1}$, so that $\bar{\nu} = \hat{\nu}_{a}$ and $w_{i}=w_{a}+o(h^{2})$. We investigate the properties of the algorithms used to compute $p_{d}$. 
We show that the constraints of the optimization problem are such that the discretizations of Equation (\ref{eq:PDEvector}) are monotone.

\subsubsection{Optimization}
\label{subsubsec:OptimizationProperties}
The objective function can be rewritten as: 
\begin{eqnarray}
\label{eq:objReformulated}
f(\uvec_{d}) &=& 2 \left(  \| \uvec_{d} - \uvec_{\min} \|^{2} - \uvec_{a}\cdot \uvec_{i}^{T} \right)
\quad \mathrm{where} \quad \uvec_{\min} = \left( \frac{u_{a}+u_{i}}{2} , \frac{v_{a}+v_{i}}{2} \right)
\end{eqnarray}
\ie $f$ is a paraboloid which reaches its minimum at $\uvec_{\min}$. It follows from the convexity of $f$ that the solution of our optimization problem either is $\uvec_{\min}$, or lies on the boundary of the feasibility set. 
In the limit where $h\rightarrow 0$, we expect $w_{\min} \approx (w_{a}+w_{i})/2$. Recalling the relation $t=\beta v+\gamma w$, if $\gamma$ is small we may have $t_{\min} \leq \max \{ t_{a},t_{b} \}$ which violates constraint (V2). As a result, in general $\uvec_{\min}$ does not belong to the feasibility set. 

Turning to constraint (V1) requires the study of the sign of $\rho_{1}+\rho_{2}G^{2}_{0}$. It can be shown that 
\begin{eqnarray}
\rho_{1} 
\geq \|M\|^{2}\|N\|^{2} \epsilon 
\qquad \mathrm{for~some~} \epsilon = \epsilon(\hat{R},\vec{M},\vec{N}) >0
\end{eqnarray}
Picking $\delta>0$ such that $\epsilon \geq \delta^{2}/(1+\delta^{2})$ gives $\rho_{1}+\rho_{2}G^{2}_{0} \geq 0$ when $|G_{0}| <\delta$. When $|G_{0}| \geq \delta >0$, using the approximation $w_{i} = w_{a}$ yields the following estimate: 
\begin{eqnarray}
\rho_{1}+G_{0}^{2} \rho_{2}
~& \approx &~ \left( \gamma^{2} - \beta^{2} G_{0}^{2} \right) m^{2}_{u} + m^{2}_{v} 
~=~ m^{2}_{v} \geq 0
\end{eqnarray}

Next, we momentarily focus on the solution to the following subproblem: Minimize (A) subject only to constraint (V2). The constraint is equivalent to
\begin{eqnarray}
\label{eq:SolSub}
\beta \left( v_{d}-v_{a} \right) + \gamma \left( w_{d}-w_{a} \right) \geq \beta h
~ \Longrightarrow ~ 
\left( v_{d}-v_{a} \right) +o(h^{2}) \geq h
\end{eqnarray}
so that it primarily constrains $v_{d}$. Thus, as $h \rightarrow 0$, the solution tends to
\begin{eqnarray}
u_{d} = u_{\min} \qquad
v_{d} = \max \left\{ h + v_{a} , v_{\min} \right\} 
\end{eqnarray}

The interplay between (V2) and (E) must be addressed. If $p_{j} \in \Aa$ since $t_{a}\geq t_{j}$, 
\begin{eqnarray}
\beta \left( v_{d}-v_{j} \right) + \gamma \left( w_{d}-w_{j} \right) \geq \beta h
~ \Longrightarrow ~ 
\left( v_{d}-v_{j} \right) +\mathcal{O}(h^{2}) \geq h
\end{eqnarray}
and so $\|p_{d}-p_{i} \| \geq h$ as $h\longrightarrow 0$.

In conclusion: Under the assumption that $\psi$ is locally $C^{1}$, in the limit where $h \longrightarrow 0$, we have that (V1) holds, and satisfying constraint (V2) implies satisfying constraint (E). It follows that the solution of the optimization problem is given by the minimum of (\ref{eq:objReformulated}) subject to the constraint (\ref{eq:SolSub}). Note that in the event where the assumption on the regularity of $\psi$ is dropped, neither existence nor uniqueness can easily be asserted.

\subsubsection{Direct solver}
\label{subsubsec:DirectSolverProperties} 
Given a location $\uvec_{d}$, the direct solver returns a value $w^{\mathrm{dir}}_{d}$ using either (\ref{SolutionQuadratic}) or (\ref{SolutionLinear}). We first proceed to showing that this discretization is degenerate elliptic in the sense of \cite{Oberman}, \ie Letting
\begin{eqnarray}
&~& \bar{H}(\psi_{i},\psi_{a}) := -\left[ \gamma - \beta \psi_{v}(\psi_{a},\psi_{i}) \right] - G_{0} \sqrt{ \left( \beta + \gamma \psi_{v}(\psi_{a},\psi_{i}) \right)^{2} + \psi^{2}_{u}(\psi_{a},\psi_{i}) }
\end{eqnarray}
we show that $\bar{H}$ is nondecreasing in each variable. Recall Relation (\ref{Rotation}):
\begin{eqnarray*}
\psi_{u}(\psi_{a},\psi_{i}) = \frac{1}{\det B} \left( (s_{i})_{v} \psi_{a}  - (s_{a})_{v} \psi_{i} \right) 
\quad
\psi_{v}(\psi_{a},\psi_{i}) = \frac{1}{\det B} \left( -(s_{i})_{u} \psi_{a} + (s_{a})_{u} \psi_{i} \right) 
\end{eqnarray*}
From the previous subsection, we expect $|\vec{s}_{a}|$ and $|\vec{s}_{i}|$ to be $\mathcal{O}(h)$, as well as
\begin{eqnarray}
-\frac{(s_{i})_{u}}{\det B} \, = \, c_{1} \, \geq \, 0
\quad \mathrm{and} \quad 
\frac{(s_{a})_{u}}{\det B} \, = \, c_{2}  \, \geq \, 0
\end{eqnarray}
Moreover, given our assumption that $\bar{\nu} = \nthree(p_{a})$, we have that $\nabla \psi(\uvec_{a}) = \vec{0}$, and
\begin{eqnarray}
\nabla \psi(\uvec_{d}) = \nabla \psi(\uvec_{a}) + \vec{\mathfrak{h}} = \vec{\mathfrak{h}}
\end{eqnarray}
where $\vec{\mathfrak{h}}$ if such that $|\vec{\mathfrak{h}}| = \mathcal{O}(h)$.  We have:
\begin{eqnarray}
\frac{\partial \bar{H}}{\partial \psi_{a}} 
&=& \beta \frac{\partial \psi_{v}}{\partial \psi_{a}} - G_{0}
\frac{ (\beta+\gamma\psi_{v}) \gamma \frac{\partial \psi_{v}}{\partial \psi_{a}} + \psi_{u}\frac{\partial \psi_{u}}{\partial \psi_{a}}}{\sqrt{ \left( \beta + \gamma \psi_{v} \right)^{2} + \psi^{2}_{u} }} \\
&=& 
\beta \frac{\partial \psi_{v}}{\partial \psi_{a}}
- G_{0} \frac{\gamma \frac{\partial \psi_{v}}{\partial \psi_{a}} + \left( \gamma^{2} \psi_{v}\frac{\partial \psi_{v}}{\partial \psi_{a}} + \psi_{u}\frac{\partial \psi_{u}}{\partial \psi_{a}} \right)/\beta }{\sqrt{ 1 + \left( 2 \beta \gamma \psi_{v} + \gamma^{2} \psi^{2}_{v} + \psi^{2}_{u} \right)/\beta^{2} }} \\
&=& 
\beta c_{1} 
- G_{0} \frac{\gamma c_{1} + \mathcal{O}(h) }{\sqrt{ 1 + \mathcal{O}(h)}} \\
&=& 
\left( \beta-G_{0}\gamma \right) c_{1} + \mathcal{O}(h)\\ 
&=& 
\sqrt{1+G^{2}_{0}} \, c_{1} + \mathcal{O}(h) ~ \geq ~ 0 \qquad \mathrm{for~} h \mathrm{~small~enough}
\end{eqnarray}
The argument that $\partial \bar{H} / \partial \psi_{i} \geq 0$ is completely analogous. 
We then show that the solver is stable in the sense of Barles \& Souganidis \cite{barles1991convergence}. Consider again the quadratic:
\[
 \left( (G_{0}^{2}+1)k_{1} - G^{2}_{0} k_{4} \right) w_{d}^{2} 
+ 2 \left( (G_{0}^{2}+1) k_{2} - G_{0}^{2}k_{5} \right) w_{d} 
+ \left( (G_{0}^{2}+1) k_{3} - G_{0}^{2}k_{6} \right)  = 0 \]
Using (\ref{eq:SpecialR}), (\ref{eq:ApproxDeriv}), (\ref{eq:Coeff}) and letting $|\vec{s}_{i}|=c_{i}h$, where $c_{i}$ is $\mathcal{O}(1)$, we may rewrite it as:
\begin{eqnarray}
aw^{2}_{d} + 2(b_{1}+hb_{2})w_{d}+(c_{1}+hc_{2}) = 0
\quad \mathrm{where~} a,b_{1},b_{2},c_{1},c_{2} ~\mathrm{are}~ \mathcal{O}(1)
\end{eqnarray}
Assuming that $h<1$, it is easily shown that the solution can be bounded by a bound independent of $h$. 
Verifying the consistency of the scheme reveals that it is first-order accurate with respect to $h$. Locally uniform convergence follows from \cite{barles1991convergence}.

\subsubsection{Iterative solver}
\label{subsubsec:IterativeSolverProperties} 

Given a location $\uvec_{d}$, the iterative solver returns a value $w^{\mathrm{iter}}_{d}$ using (\ref{eq:IterativeSolver}). The initial guess is provided by the direct solver. We let:
\begin{eqnarray*}
\tilde{H}(w^{n}_{d},\psi^{n}_{a},\psi^{n}_{i}) : = -\left[ \gamma - \beta \psi_{v}(\psi^{n}_{a},\psi^{n}_{i}) \right] - G^{n} \sqrt{ \left( \beta + \gamma \psi_{v}(\psi^{n}_{a},\psi^{n}_{i}) \right)^{2} + \psi^{2}_{u}(\psi^{n}_{a},\psi^{n}_{i}) }
\end{eqnarray*} 
where $\psi^{n}_{i} = (w^{n}_{d}-w_{i})/|\vec{s}_{i}|$ for $i=a,b,c$, and 
$G^{n}:= F( \, \xvec (\uvec_{d},w^{n}_{d}), t(\uvec_{d},w^{n}_{d}) \, )$. Without loss of generality, we assume that the scheme is proper in the sense of \cite{Oberman}, considering $\tilde{H}+\tilde{\epsilon}w^{n}$ if necessary. From the previous section, it follows that $\tilde{H}$ is a degenerate elliptic scheme. Theorem 8 of \cite{Oberman} guarantees convergence to the unique solution for arbitrary initial data. 

\subsection{Global properties}

\subsubsection{The sets $\Aa$ and $\NB$}
We demonstrate some properties of these sets. 

\begin{lemma}
The size of $\NB$ cannot exceed $m$, the number of points sampling $\CC_{0}$.
\end{lemma}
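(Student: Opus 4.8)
The key structural fact is that the Main Loop processes points in a very rigid way: at each iteration exactly one point $p_a$ is removed from $\NB$ (line 4), and at most one point $p_d$ is added back to $\NB$ (line 12). So the net change in $M_{\NB}$ per iteration is $\leq 0$, with equality only when Algorithm \ref{algo:ConsOpt} succeeds. The plan is to show that during the first $m$ iterations (the tail of the initialization) the count stays exactly at $m$, and that after that it can only decrease or stay constant, never grow past $m$. First I would note that after Algorithm \ref{algo:Init} we have $M_{\NB} = m+1$, but the very first point to be processed is the auxiliary initialization point, not a genuine sample of $\CC_0$; the careful way to phrase the lemma is that once the algorithm enters its steady state, $|\NB| \leq m$. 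During iterations $1$ through $m$, lines 5--7 are skipped, so no point moves to $\Acc$ and the picture is: remove $p_a$ from $\NB$, compute $p_d$, add $p_d$ to $\NB$ — a net change of zero — hence $M_{\NB}$ stays at $m+1$, and after $m$ such steps exactly the $m$ original $\CC_0$-samples (relabelled through the repulsion/causality machinery) remain, i.e. $|\NB| = m$.

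Second, for every subsequent iteration I would carry out the same bookkeeping: $p_a$ is removed (so $M_{\NB}$ drops by one), then at most one $p_d$ is added (so $M_{\NB}$ rises by at most one), so $M_{\NB}$ after the iteration is at most $M_{\NB}$ before it. Additionally, Algorithm \ref{algo:BookKeeping} (the "clean \Book" procedure, stitching, intersection/spike removal) can only \emph{remove} points from $\NB$, never add them, so it too cannot increase $M_{\NB}$. By induction on the iteration count, $M_{\NB} \leq m$ for all iterations past the initialization phase. The case where Algorithm \ref{algo:ConsOpt} fails (line 13--14) only makes things better: $p_a$ leaves $\NB$ and nothing is added, so $M_{\NB}$ strictly decreases; this is also the mechanism (together with the Final Time cutoff at line 8, which eventually stops all insertions) by which $\NB$ is eventually emptied.

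The only subtle point — and the one I would treat most carefully — is the interaction with Algorithm \ref{algo:BookKeeping}: one must be sure that none of the operations there (stitching hanging nodes, cleaning \Book, resolving intersections/spikes) ever creates a new narrow-band point. Inspecting the description, stitching only adds \emph{segments} to \Book, not points to $\NB$, and each sub-check in "Clean \Book" is phrased purely in terms of \emph{removing} a point from $\NB$; so the monotonicity $|\NB| \mapsto |\NB|$ or smaller is preserved. Given that, the bound $|\NB| \leq m$ at every stage of the steady-state loop follows immediately, and since the statement of the lemma is precisely this bound, the proof is complete. I would also remark that this is exactly the property needed later to bound the total storage and to feed into the complexity estimate $N = \mathcal{O}(m^2)$: the narrow band never grows, while the accepted set $\Aa$ grows by at most one per iteration.
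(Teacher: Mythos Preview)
Your core argument is correct and matches the paper's proof exactly: each iteration of the Main Loop removes exactly one point from $\NB$ (line~4), adds at most one (line~12), and Algorithm~\ref{algo:BookKeeping} can only remove points; hence $|\NB|$ is non-increasing and bounded by its initial value~$m$.

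However, you have misread the initialization. The assignment $M_{\NB} \gets m+1$ in Algorithm~\ref{algo:Init} does \emph{not} mean $|\NB| = m+1$. Inspecting line~12 of the Main Loop (``Add $[p_d;\nthree_d]$ to $\NB$ in the $M_{\NB}$-th row; $M_{\NB} \gets M_{\NB}+1$'') shows that $M_{\NB}$ is the index of the \emph{next free row}, so the actual size of $\NB$ is $M_{\NB}-1 = m$ from the start. There is no ``auxiliary initialization point''. Consequently your entire first paragraph --- the separate treatment of iterations $1$ through $m$, the phrase ``once the algorithm enters its steady state'', and the claim that $M_{\NB}$ stays at $m+1$ --- is unnecessary and rests on this misreading. (Note also that lines~5--7 concern whether $p_a$ is appended to $\Acc$; they have no bearing on $|\NB|$, since $p_a$ is removed from $\NB$ at line~4 regardless.) The paper's proof is three lines: initially $|\NB|=m$; each iteration removes one, adds at most one, and may remove more via book-keeping; done. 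Your second and third paragraphs already contain exactly this argument and would suffice on their own.
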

\begin{proof}
Initially, the size of $\NB$ is $m$. 
Each iteration of the main loop successively: Removes exactly one point from $\NB$ (line 4); Adds at most one point to $\NB$ (line 12); May remove points from $\NB$ (Algorithm \ref{algo:BookKeeping}).
\end{proof}

The following lemma justifies promoting the point in $\NB$ with the smallest time value to the set $\Aa$. 
\begin{lemma}
Subsequent iterations of the Main Loop will not affect the value of the point in $\NB$ with the smallest time value. 
\end{lemma}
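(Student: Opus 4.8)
The plan is to argue by causality, exploiting the fact that the $t$-coordinate of every child point exceeds that of its parents by at least $\Delta t>0$, as enforced by constraint (\ref{VALID2}). Fix an iteration of the Main Loop and let $p_a\in\NB$ be the point with the smallest time value $t_a$ at that moment. I want to show that in every later iteration, no operation can modify the data stored at $p_a$ (while it remains in $\NB$), so that the value it carries when eventually promoted to $\Aa$ is the value it has now.

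First I would observe that a point in $\NB$ can only be altered in one of two ways: (i) it is chosen as a new child $p_d$ by Algorithm \ref{algo:ConsOpt} — but child points are freshly created and added in the $M_\NB$-th row (line 12 of Algorithm \ref{algo:MainLoop}), never overwriting an existing entry, so this cannot change $p_a$; or (ii) it is deleted or relabelled by the book-keeping routine (Algorithm \ref{algo:BookKeeping}). So the only real content is to rule out (ii), i.e. to show $p_a$ survives book-keeping until it is selected on line 3. Here I would note that the Main Loop always selects the $\NB$-point of smallest $t$; since $t$-values only ever appear among points currently or previously in $\NB$, and each newly created child $p_d$ satisfies $t_d\ge \max\{t_a,t_i\}+\Delta t > t_a$ by (\ref{VALID2}), every point ever added to $\NB$ after this iteration has strictly larger time than $t_a$. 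Hence $p_a$ remains the unique minimizer, is selected at the very next iteration, removed from $\NB$ on line 4, and (if counter $>m$) promoted to $\Aa$ — all before any further child could be computed from it. In particular no later iteration even reaches the point where it could affect $p_a$, because $p_a$ is no longer in $\NB$.

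The one subtlety — and what I expect to be the main obstacle — is the book-keeping invoked at the end of the current iteration (line 18), which runs \emph{before} $p_a$ is selected: it could in principle delete $p_a$ (e.g. while resolving a loop, intersection, or spike). I would handle this by interpreting the claim as concerning the value of $p_a$ \emph{conditional on it still being present}: if $p_a$ is removed by book-keeping it is no longer "the point with the smallest time value" to be promoted, and if it survives, the argument above shows its stored coordinates and normal are untouched by anything after the current iteration, since relabelling preserves the data attached to a surviving node and no new child has smaller time. Thus the value with which $p_a$ is accepted equals its value at the moment it became the $\NB$-minimum, which is exactly what is needed to justify the fast-marching acceptance step. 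I would close by remarking that this is the direct analogue of the causality property underlying Dijkstra's algorithm and the FMM, with $\Delta t>0$ playing the role of the strict positivity of edge weights.
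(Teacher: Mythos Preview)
Your core idea---that constraint (\ref{VALID2}) forces $t_d>t_a$ for any child computed from the accepted point---is exactly what the paper uses, so the essential mechanism is right. But your framing differs from the paper's in a way worth noting.

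You read ``affect the value'' as ``overwrite or delete the stored data of $p_a$,'' and then spend most of the proof on whether children overwrite rows and whether book-keeping removes $p_a$. That reading makes the lemma nearly tautological, since by construction the algorithm never recomputes an existing point. The paper's three-line proof instead reads the lemma in the PDE sense: during iteration $k$, the just-accepted point $p_a$ spawns a child $p_d$ with $t_d\ge t_a+\Delta t>t_a\ge t_\ast$, where $p_\ast$ is the (new) minimum of $\NB$; hence $p_d$ cannot lie in the \emph{past domain of influence} of $p_\ast$. The content is a causality statement---no information generated now or later can legitimately feed into $p_\ast$---not merely a data-integrity statement.

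So your argument is not wrong, and your closing analogy with Dijkstra/FMM is apt, but the book-keeping discussion is extraneous and the ``data is never overwritten'' observation is not what the lemma is really asserting. If you rewrite the proof around the domain-of-influence inequality (one line from (\ref{VALID2})) you recover the paper's argument exactly.
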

\begin{proof}
Suppose that during the $k$-th iteration of the main loop, $p_{a} \in \NB$ is promoted to the set $\Aa$, and let us denote by $p_{\ast}$ the point in $\NB$ with the smallest time value. It follows from constraint (V2) that if $p_{a}$ has a child point, its time value satisfies $t_{d} \geq t_{a} + \Delta t > t_{a} \geq t_{\ast}$. Therefore $p_{d}$ cannot belong to the past domain of influence of $p_{\ast}$. 
\end{proof}

\textsl{Remark:} It is not possible to closely mimick the proofs of Sethian \cite{Sethian} or Tsitsiklis \cite{Tsitsi}. Indeed, those proofs assume the existence of a graph where the values of $\psi$ (\resp $V$) are to be found. In constrast, in our framework, the choice of which point gets promoted to the set $\Aa$ directly influences the shape of the graph sampling $\MM$. 
$\triangle$

\subsubsection{Global convergence}
\label{subsubsec:GlobalConv}

When discussing the global convergence of the scheme, we distinguish between different cases based on the regularity and topological properties of $\MM$. 

\bull \textsl{$\MM$ is $C^{1}$.} Then the local solvers may be used anywhere on $\MM$, since the correspondence between the LSE and the Dirichlet problem highlighted in \S \ref{subsec:PDE} holds in the strong sense. As $h\rightarrow 0$, the assumptions listed in the beginning of \S \ref{subsec:LocalProperties} hold and imply that the local solvers are convergent. Considering the relation $\Delta t = h/\sqrt{G^{2}(\uvec_{a})+1}$ as a CFL condition ensures the stability of the global scheme. Global convergence is thus expected. 

\bull \textsl{$\MM$ is $C^{0}$ and the singularities arise from topological changes.} Our argument is that the global constraints imposed on $\NB$ are such that the local solvers are only used in regions of $\MM$ that are smooth. Indeed, topological changes imply that the segments monitored by \Book \, intersect. Such a situation should be prevented by Algorithm \ref{algo:BookKeeping}. It is therefore expected that the scheme is convergent in those situations as well -- although no proof is available at this point. 

\bull \textsl{$\MM$ is $C^{0}$.} Should the speed $F$ be merely continuous, $\MM$ may develop singularities without undergoing topological changes. In those situations, it is unclear whether the scheme globally converges. Let us note that the iterative scheme in \S\ref{subsubsec:IterativeSolverProperties} should converge regardless of the regularity of the Hamiltonian. Nonetheless, the size of the pseudo-time step $\Delta \tau$ required for convergence may be hard to determine. For that reason, the examples considered in the next section only involve speeds that are $C^{1}$ everywhere along the curve.

\subsubsection{Computational time}
\label{subsubsec:ComputationalTime}
Recall that, in what follows, $n=2$ and $m$ is the number of points sampling $\CC_{0}$.
We first bound the number of operations in one iteration of the main loop. 
Finding the minimum among $\NB$ can be done in $\mathcal{O}(\log m)$ iterations using a binary heap \cite{Sethian}. Finding the $L$ closest neighbours of $p_{a}$ among $\Aa$, where $L$ is typically $\sim 10$ requires $\sim mL$ number of operations. (Since $\Aa$ is ordered in time, it is only necessary to look at the tail of the list.)

The grid method performs at most five iterations on an $s^{n}$ grid, where $s$ is typically $\sim 10$. Computing $w$ and evaluating constraints (V1) and (V2) at each point requires $\sim 10$ operations. Evaluating constraint (E) requires $\sim L$ operations. We arrive at a total of $5 s^{n} \times 10 \times L = \mathcal{O}(10^{n+2})$ operations to find $\uvec_{d}$. 
The complexity of the iterative solver is hard to bound \textsl{a priori}. Indeed, it is currently unclear how many operations may be required to compute the CFL condition. However, when $F \in C^{1}(\RR^{n}\times [0,T])$, we find that devoting $\sim 10^2$ operations to this task results in a solver that only requires $\mathcal{O}(10)$ iterations to converge. 
Updating \Book \, in Algorithm \ref{algo:BookKeeping} requires $\mathcal{O}(m)$ operations, and as noted earlier, the other procedures usually require $\mathcal{O}(1)$ operations, but may require up to $\mathcal{O}(m)$ (\eg When topological changes occur).  
We arrive at the conclusion that the cost of one iteration is bounded by: $\max \{ \mathcal{O}(m) , \mathcal{O}\left(10^{n+2}\right) \}$.

The parameter $m$ is determined by the user and directly influences the total number of points $N$ computed by the algorithm. 
We have $h \approx \mathrm{length}(\CC_{0})/(2m)$ so that $h = \mathcal{O}(1/m)$. 

\textsl{When $F \equiv C$ or $F=F(t)$:} Then $\MM$ is roughly decomposed into $I$ strips comprising $\mathcal{O}(m)$ points. 
We have $I\leq T_{\mathrm{F}} / (\Delta t)_{\min}$ where $(\Delta t)_{\min} = h/\sqrt{\max G^{2}+1} =\mathcal{O}(h)$, which yields a reliable estimate for the total number of points:
\begin{eqnarray}
N 
= \left( \# \mathrm{~of~strips} \right) \, \times \, \left( \# \mathrm{~points~per~strip} \right)
= \frac{T_{\mathrm{F}}}{\mathcal{O}(h)} \, \times \, \mathcal{O}(m) 
= \mathcal{O}(m^{2})
\end{eqnarray}

\textsl{When $F = F(\xvec,t)$:} Then $\MM$ no longer exhibits a stratified structure. However, in general, we still expect a strip of average height $\Delta t = \mathcal{O}(h)$ to contain $\mathcal{O}(m)$ points, and therefore arrive at the same estimate for $N$.

In summary, the total cost of the algorithm is $CN$ where
\begin{eqnarray}
C = \max \{ \mathcal{O}(m) , \mathcal{O}\left(10^{n+2}\right) \}
~~ \mathrm{and} ~~
N = \mathcal{O}(m^{2}) 
\end{eqnarray}

\textsl{Remark:} Although the number of operations required to find $\uvec_{d}$ is admittedly large, they can be performed fairly efficiently. In MatLab for example, the entire procedure can be vectorized. Empirically, it is found to take no more than 33\% of the total computational time. (See Appendix \ref{app:TestsProcedures}.) $\qquad \triangle$

\section{Results}
\label{sec:Examples}

We illustrate the properties of the algorithm through various examples. 
More details about the tests procedures can be found in Appendix \ref{app:TestsProcedures}. 

\subsection{Examples} 
With the exception of \Ex \ExTwoCircles, $\CC_{0}$ consists of a circle centered at the origin. The corresponding manifolds $\MM$ are presented on Figure \ref{fig:ExactM}. The exact normal is used to initialize the algorithm. 

\paragraph{(a) The expanding circle} Using the constant speed $F \equiv 1$ yields a manifold $\MM$ that consists of a truncated cone. 

\paragraph{(b) The football} The speed is set to $F(t)=1-e^{10t-1}$ resulting in a  circle that first expands and then contracts. $\MM$ then resembles an american football. 

\paragraph{(c) Two circles} As in the previous example, the time-dependent speed $F(t)=1-2t$ changes sign. However, the fact that $\CC_{0}$ now consists of two disjoint circles implies that topological changes occur during the evolution.

\paragraph{(d) The oscillating circle} The speed is set equal to $F= a\sin(b(t+c))$ for some constants $a$, $b$ and $c$. 

\paragraph{(e) The escaping circle} The speed is such that the circle first expands, and then moves in the positive $x$-direction while growing. 

\paragraph{(f) The 3-leaved rose} $F$ depends on the polar angle in such a way that $\CC_{t}$ eventually takes the shape of a 3-leaved rose. 

\begin{figure}
\begin{subfigure}{.48\textwidth}
  \centering
  \includegraphics[width=\linewidth]{./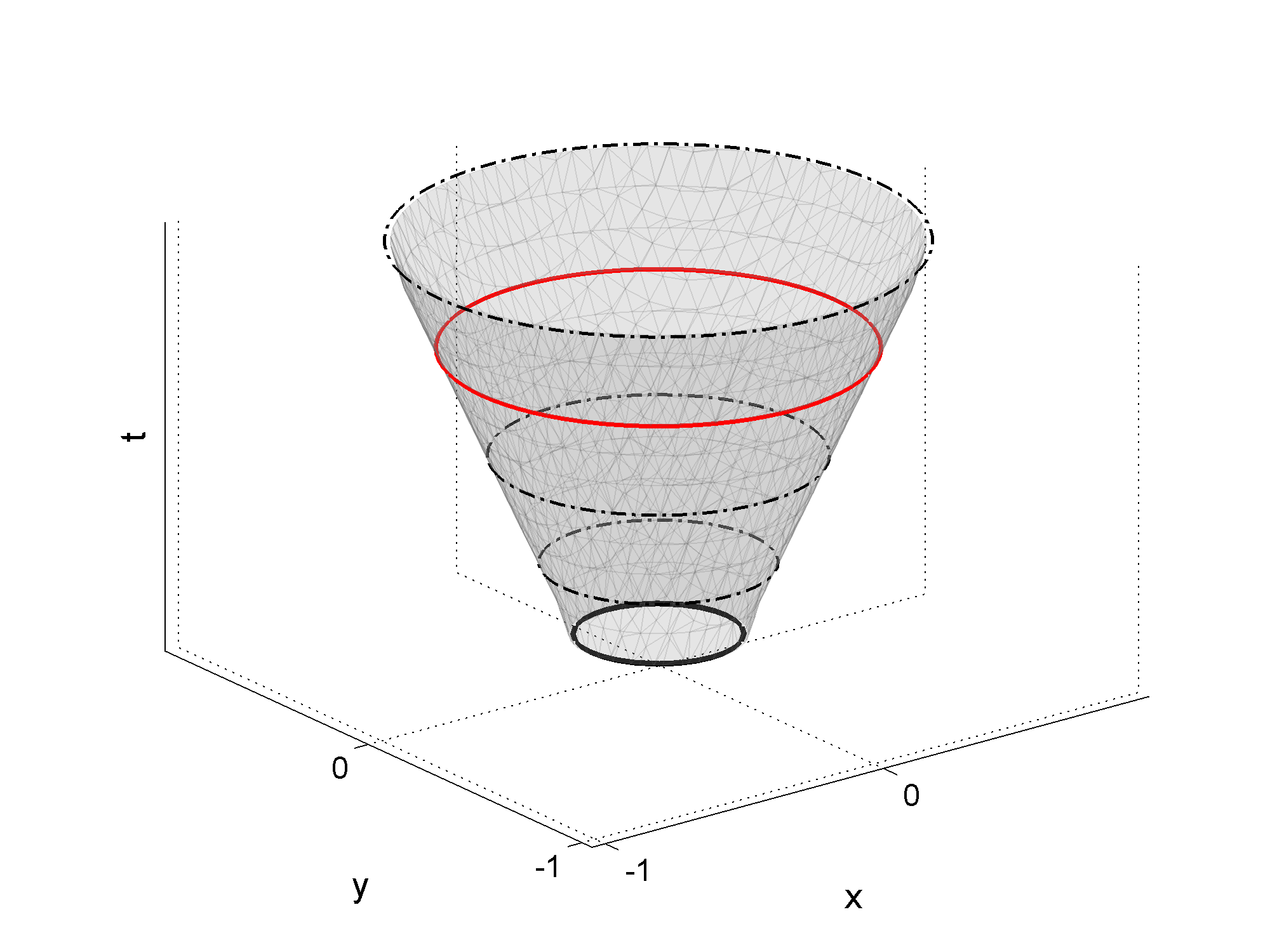}
  \caption{The expanding circle}
  \label{fig:Ma}
\end{subfigure}
\begin{subfigure}{.48\textwidth}
  \centering
  \includegraphics[width=\linewidth]{./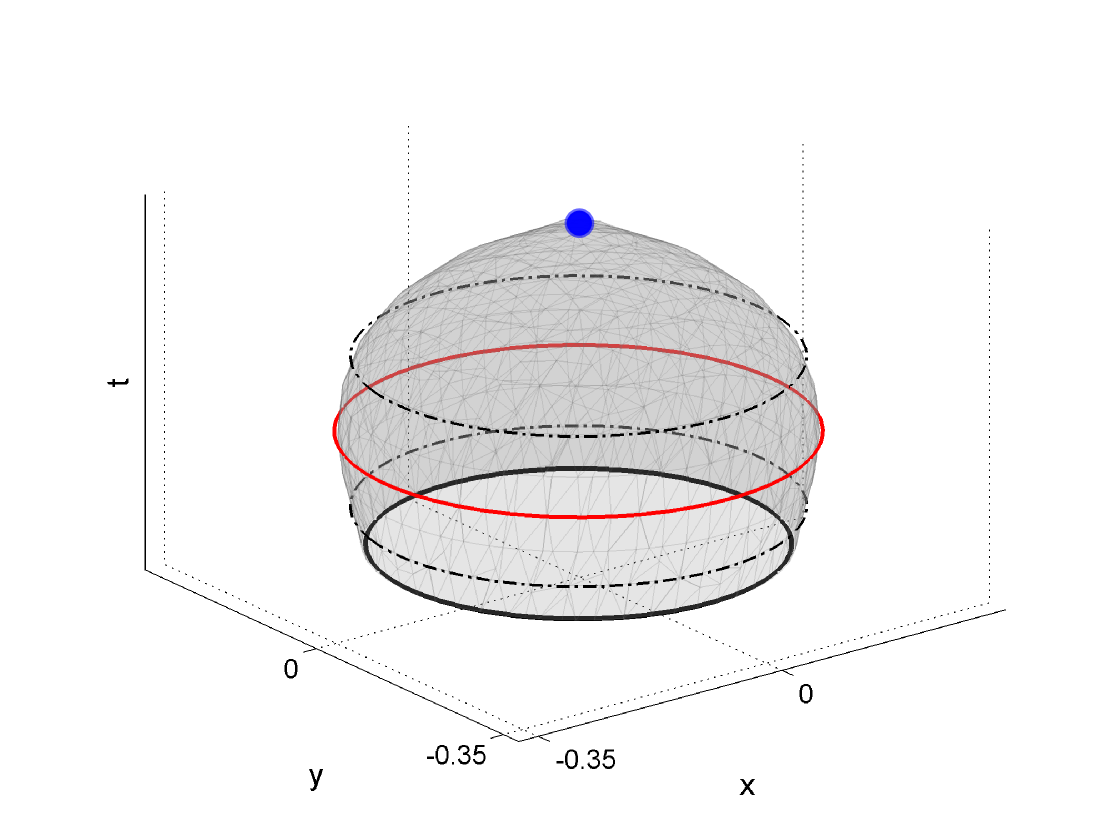}
  \caption{The football}
  \label{fig:Mb}
\end{subfigure} \\
\begin{subfigure}{.48\textwidth}
  \centering
  \includegraphics[width=\linewidth]{./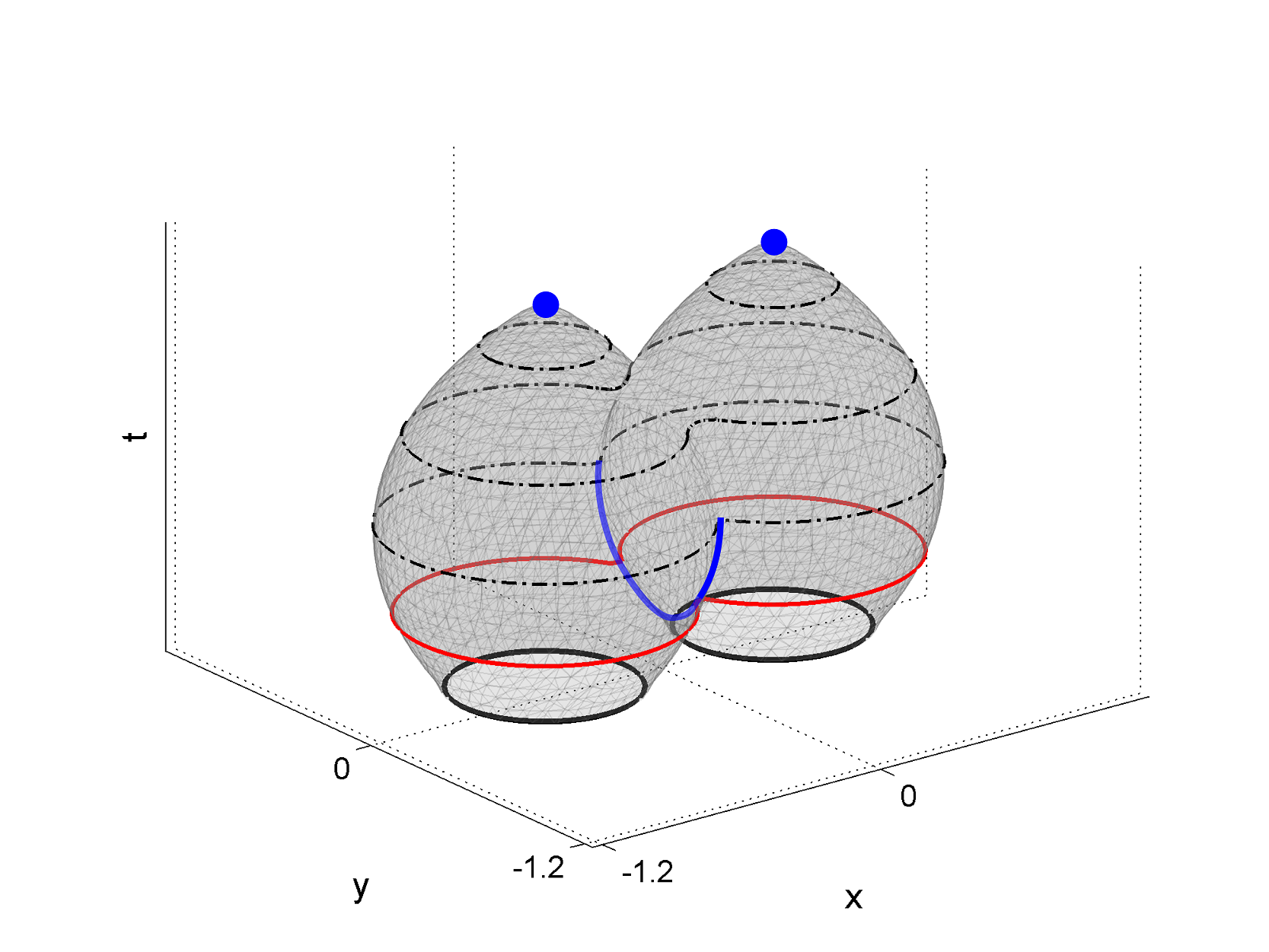}
  \caption{Two circles}
  \label{fig:Mc}
\end{subfigure}
\begin{subfigure}{.48\textwidth}
  \centering
  \includegraphics[width=\linewidth]{./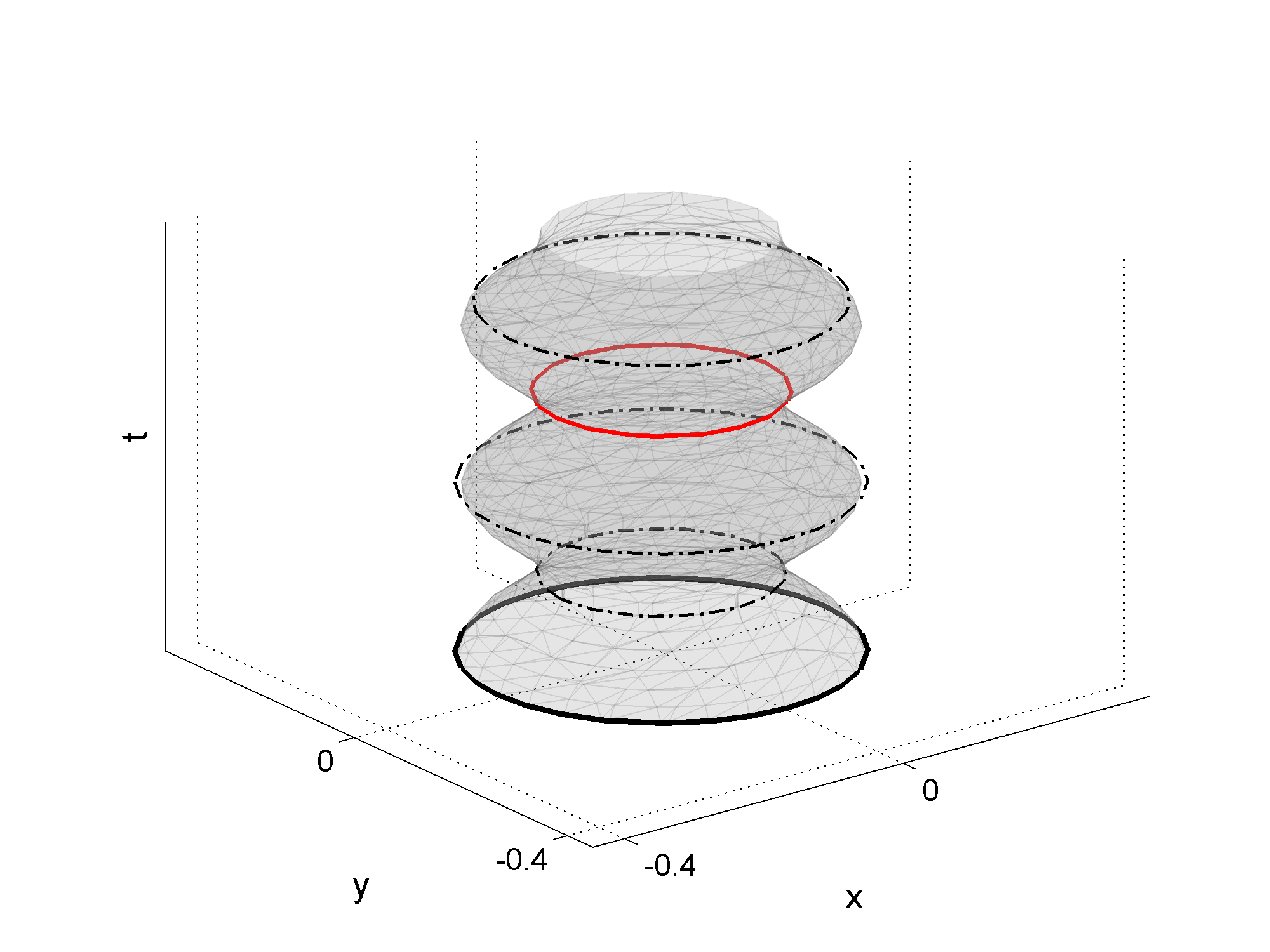}
  \caption{The oscillating circle}
  \label{fig:Md}
\end{subfigure} \\
\begin{subfigure}{.48\textwidth}
  \centering
  \includegraphics[width=\linewidth]{./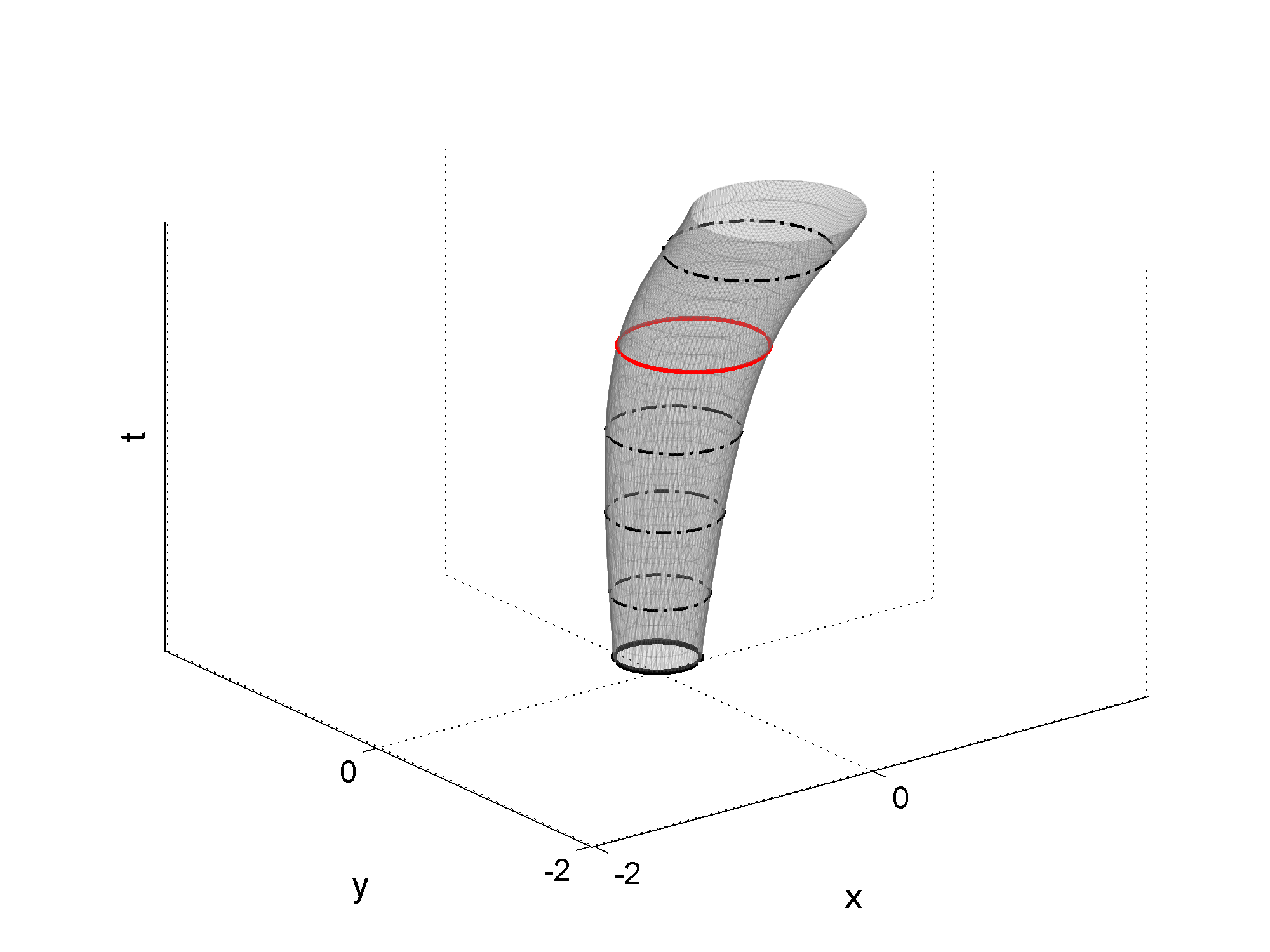}
  \caption{The escaping circle}
  \label{fig:Me}
\end{subfigure}
\begin{subfigure}{.48\textwidth}
  \centering
  \includegraphics[width=\linewidth]{./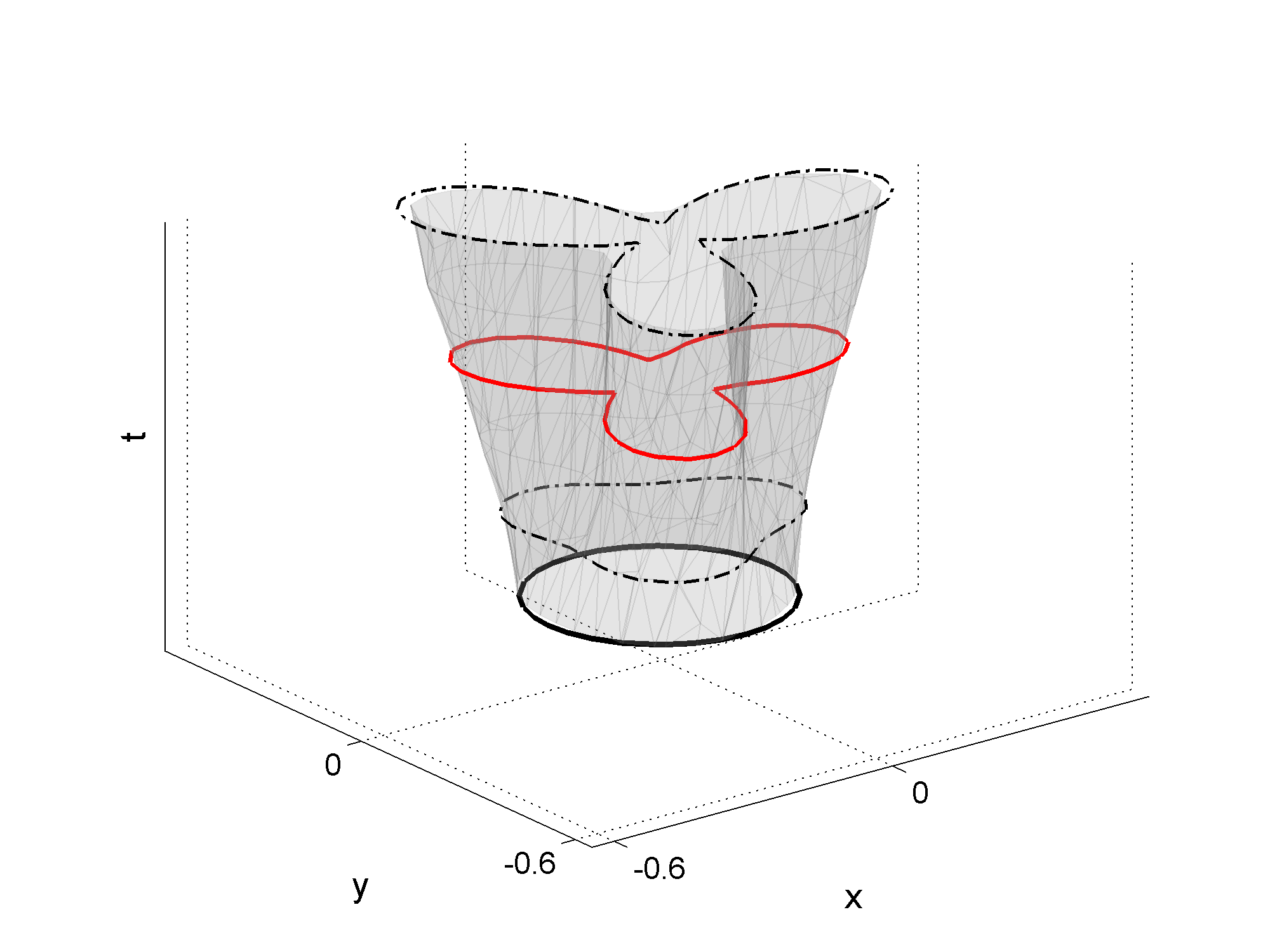}
  \caption{The 3-leaved rose}
  \label{fig:Mg}
\end{subfigure}
\caption{The manifold $\MM$ in $xyt$-space. $C^{0}$ points are featured in blue.}
\label{fig:ExactM}
\end{figure}


\begin{figure}
\begin{floatrow}
\ffigbox{%
  \includegraphics[width=0.48\textwidth]{./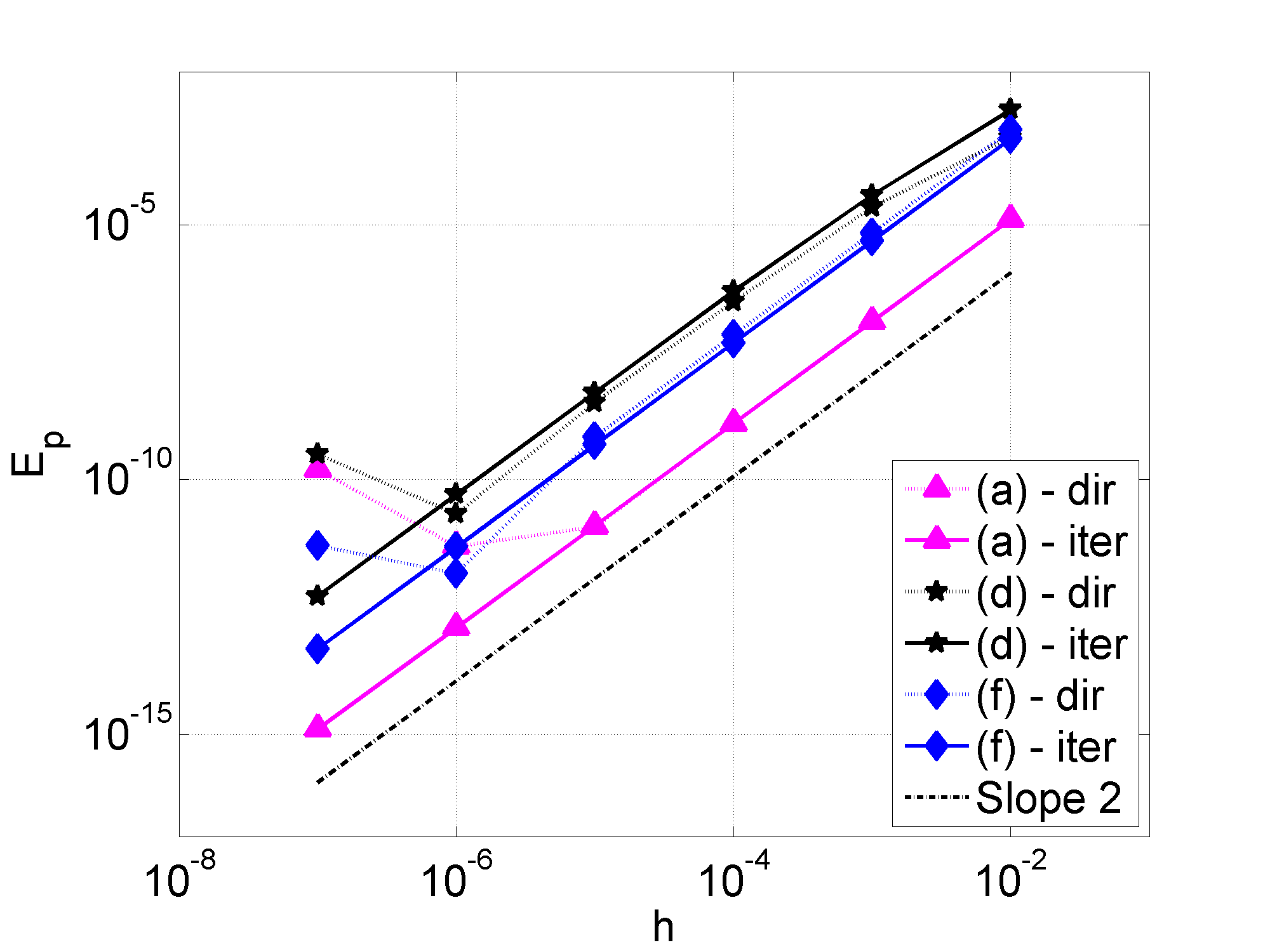}
}{%
		\caption{Local convergence results for examples \ExExpanding, \ExOscillatingCircle \, and \ExThreeLeavedRose.}
	\label{fig:LocalConvergence}
}
\capbtabbox{%

\footnotesize
\begin{tabular}{c||c|c|c}
\toprule
$h$ & (a) & (d) & (f) \\
\midrule
$10^{-2}$ & 1 & 10 & 9 \\ 
$10^{-3}$ & 1 & 9 & 9 \\
$10^{-4}$ & 1 & 9 & 8 \\
$10^{-5}$ & 4 & 8 & 7 \\ 
$10^{-6}$ & 6 & 7 & 6 \\
$10^{-7}$ & 8 & 10 & 7 \\
\bottomrule
\end{tabular}
\bigskip
\bigskip
}{%
\caption{Number of iterations.}
\label{tab:LocalProperties}
}
\end{floatrow}
\end{figure}

\normalsize

\begin{figure}
\begin{subfigure}{.48\textwidth}
  \centering
  \includegraphics[width=\linewidth]{./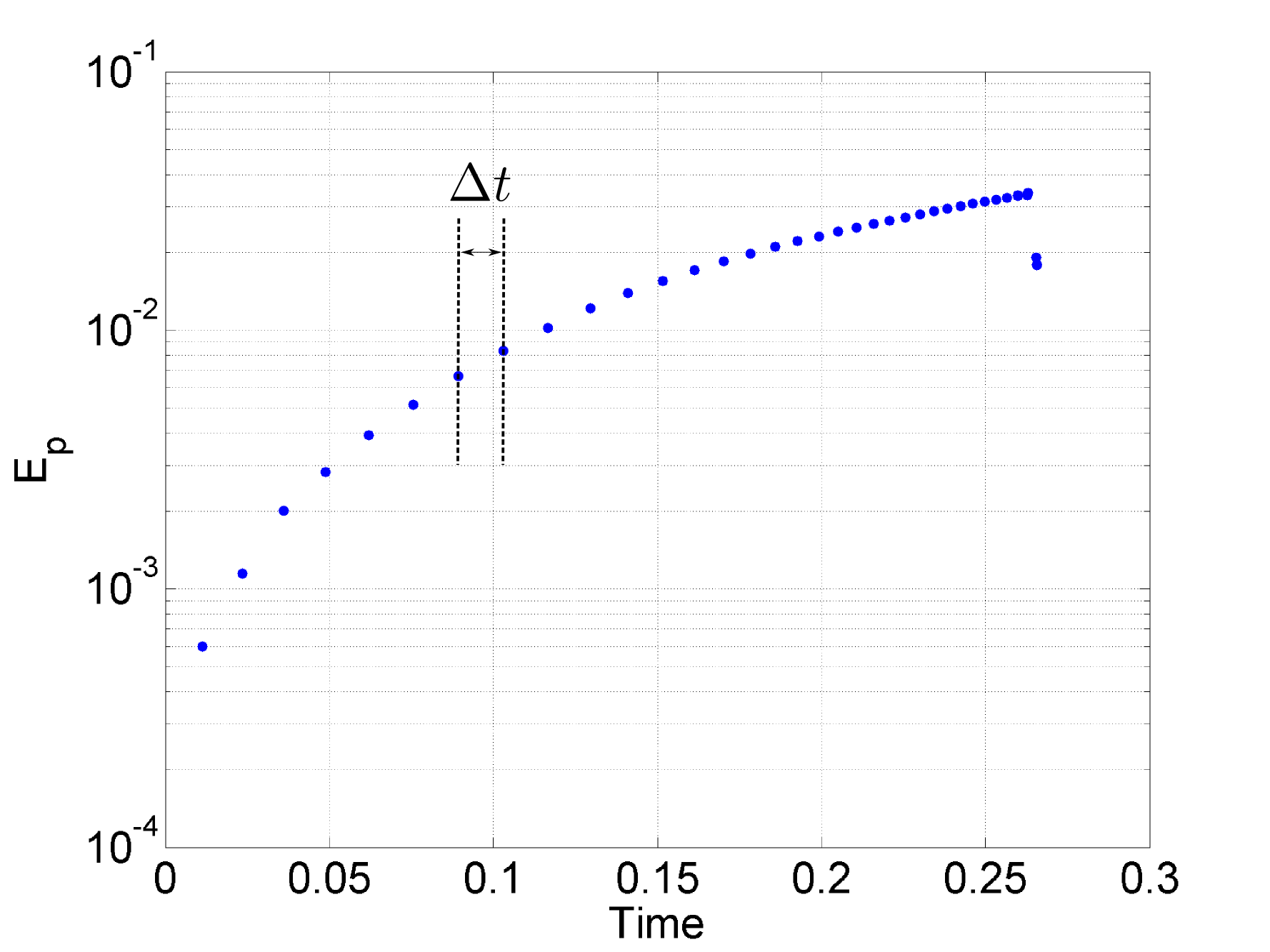}
  \caption{The football, \Ex \ExFootball 
}
  \label{fig:ErrorVsTimec}
\end{subfigure}
\begin{subfigure}{.48\textwidth}
  \centering
  \includegraphics[width=\linewidth]{./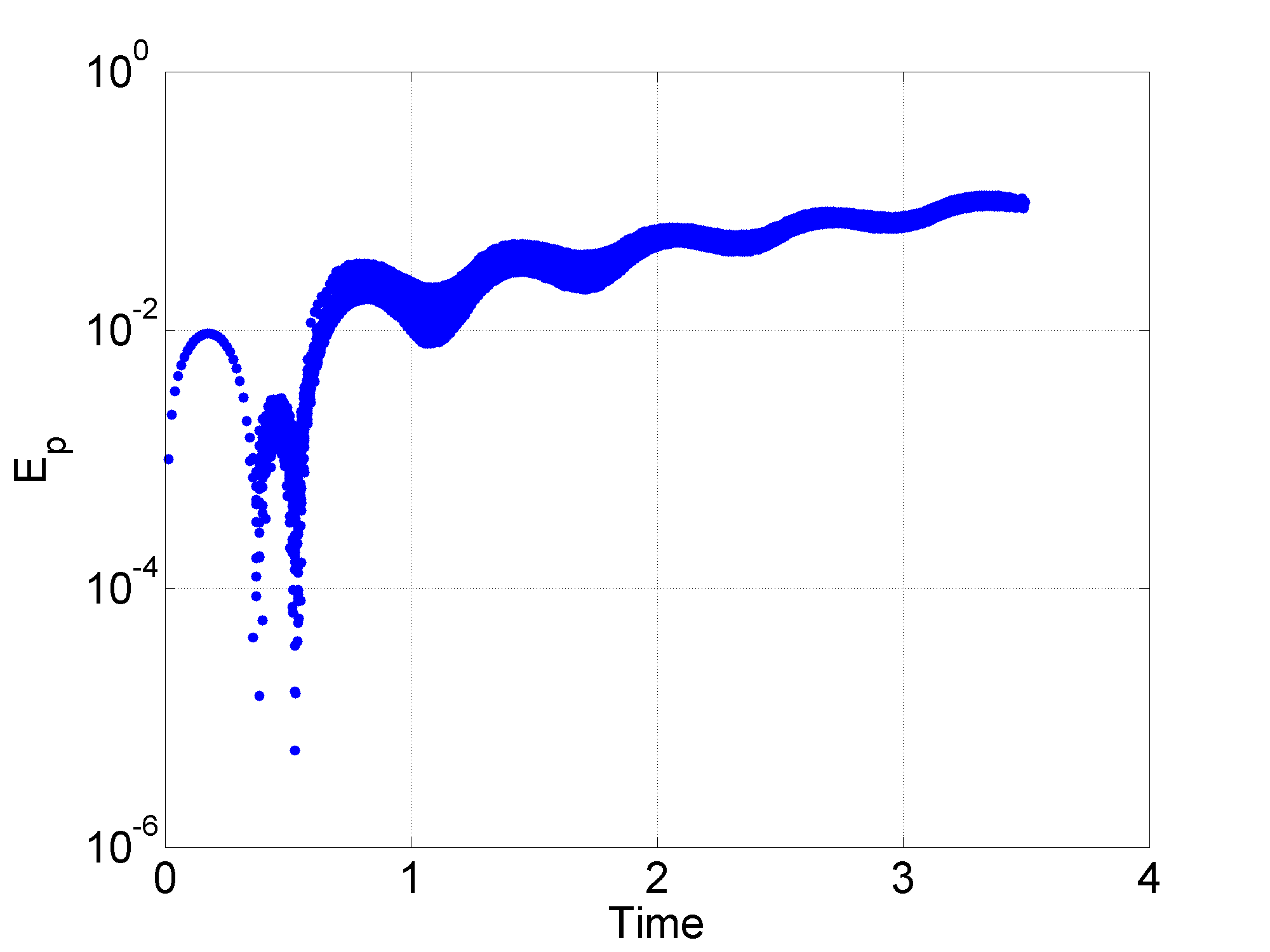}
  \caption{The oscillating circle, \Ex\ExOscillatingCircle  (6 periods). 
}
  \label{fig:ErrorVsTimed}
\end{subfigure} 
		\caption{Error versus time.}
	\label{fig:Convergence1Simulation}
\end{figure}

\subsection{Local properties} In the following, the error associated to each point is measured as $E_{p} := |\phi(p)| = |\phi(\xvec, t)|$, where $\phi$ is the exact solution to the LSE (\ref{eq:LSE}).

\subsubsection{Optimization problem} We start with an analysis of the grid method used to solve the optimization problem. 
Convergence results are presented on Figure \ref{fig:LocalConvergence} for examples \ExExpanding, \ExOscillatingCircle \, and \ExThreeLeavedRose. The errors associated with the solutions of the direct and the iterative solver are both recorded. Second order convergence with respect to the repulsion parameter $h$ is clear. This result is consistent with the test procedure: As $h$ gets smaller, the child point moves closer to its parents, but the distance between the two parent points also decreases. For the most general case represented by \Ex \ExThreeLeavedRose, the iterative solver slightly increases the accuracy of the solution. The number of iterations required to reach the tolerance is $\mathcal{O}(1)$, as recorded in Table \ref{tab:LocalProperties}.

\subsubsection{Error propagation}

We now discuss data obtained from running a full simulation. Convergence results for examples \ExFootball \, and \ExOscillatingCircle \, are presented on Figure \ref{fig:Convergence1Simulation}. In Subfigure \ref{fig:ErrorVsTimec}, the error is seen to behave nicely even near the ``tip" of $\MM$. Note that the code naturally stops once $\CC_{t} = \varnothing$. Moreover, the stratified structure alluded to in \S \ref{subsubsec:ComputationalTime} is evident. The symmetry of \Ex \ExOscillatingCircle \, gives rise to cancellations as the circle contracts. The error appears to increase at a slow pace. 

\subsection{Global properties} Define:
\begin{eqnarray} 
L_{1}(E) =h^{n} \, \sum_{p \in \Aa} E_{p}
\qquad
L_{2}(E) = \sqrt{ h^{n} \, \sum_{p \in \Aa} E^{2}_{p} }
\qquad
L_{\infty}(E) = \max_{p \in \Aa} E_{p}
\end{eqnarray}
with $n=2$, as well as the Haussdorff distance between the reconstructed and the exact curve, denoted by $L_{H}$. We first consider qualitative features of the manifold $\MM$ before turning to global convergence results and speed tests. 

\subsubsection{Evenness of the sampling}

As can be seen from Figure \ref{fig:EvenSampling}, the sampling of $\MM$ is regular. This is confirmed when the distance from a child to its parents is monitored in \Ex (b); see Figure \ref{fig:Distance}.

\begin{figure}[b!]

\begin{subfigure}{.48\textwidth}
  \centering
  \includegraphics[width=\linewidth]{./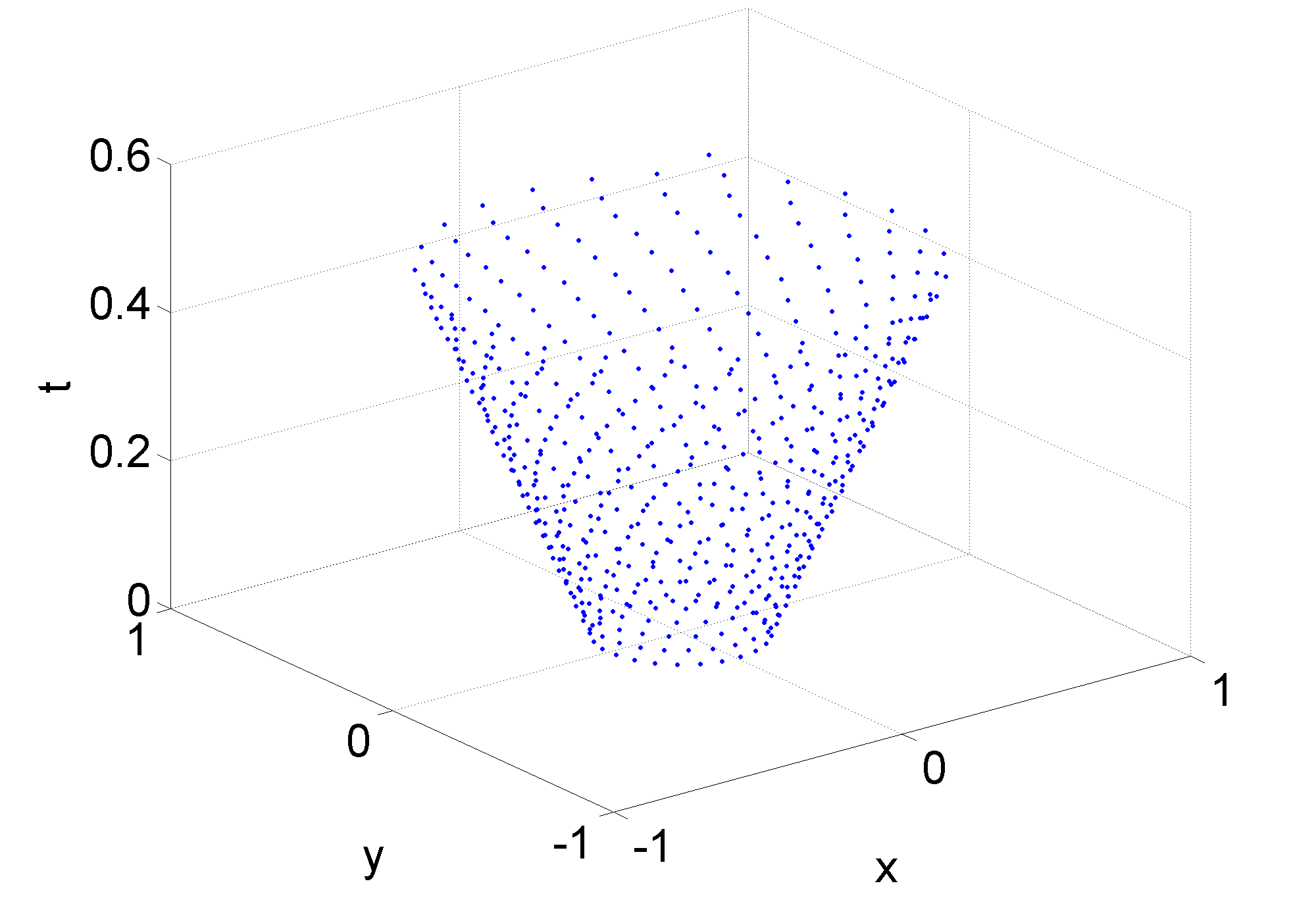}
  \caption{The expanding circle, \Ex \ExExpanding}
\label{fig:SurfaceExpandingCircle}
\end{subfigure}
\begin{subfigure}{.48\textwidth}
  \centering
  \includegraphics[width=\linewidth]{./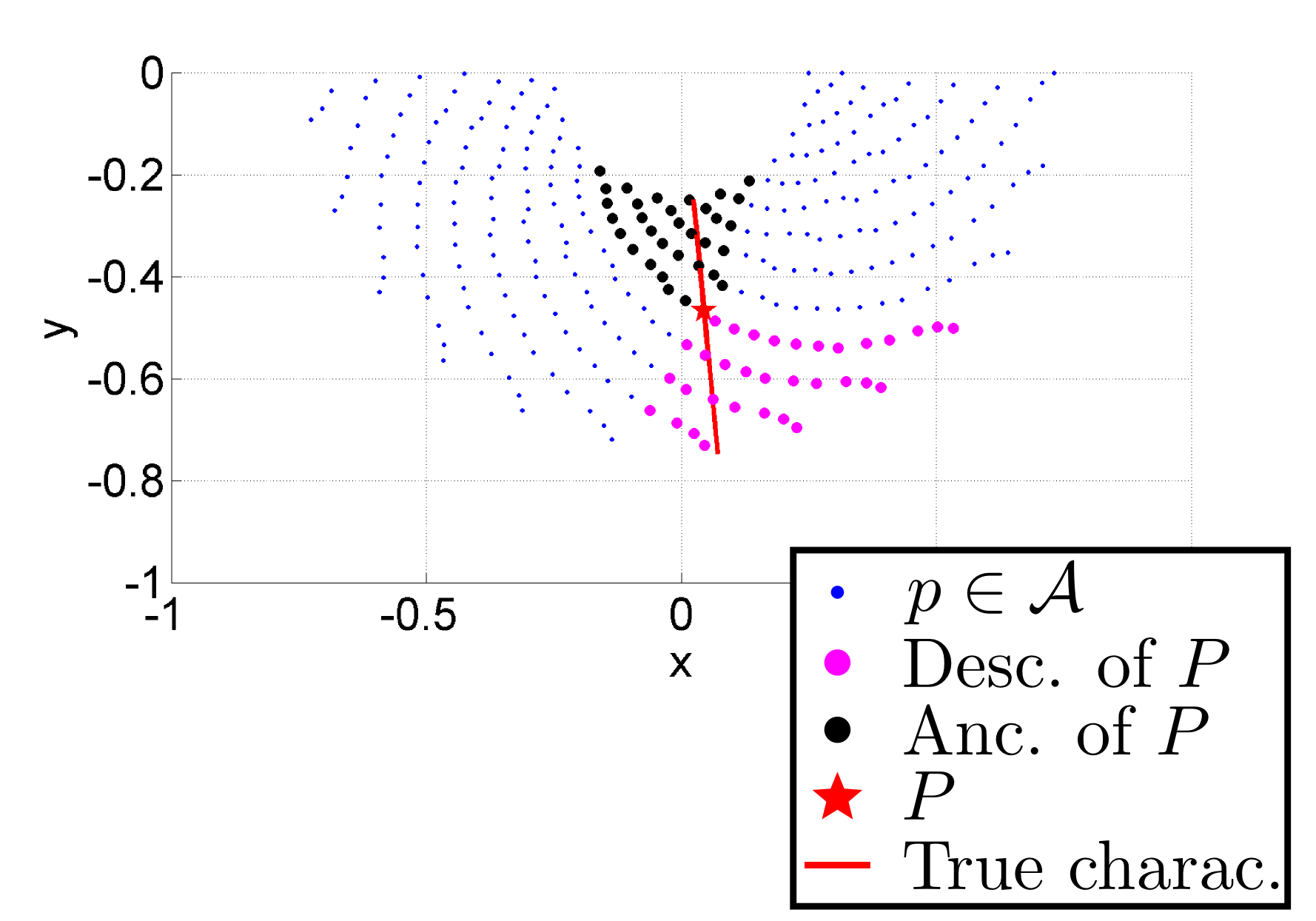}
  \caption{The expanding circle, \Ex \ExExpanding}
\label{fig:SurfaceExpandingCircleWithRelations}
\end{subfigure}

\begin{subfigure}{.48\textwidth}
  \centering
  \includegraphics[width=\linewidth]{./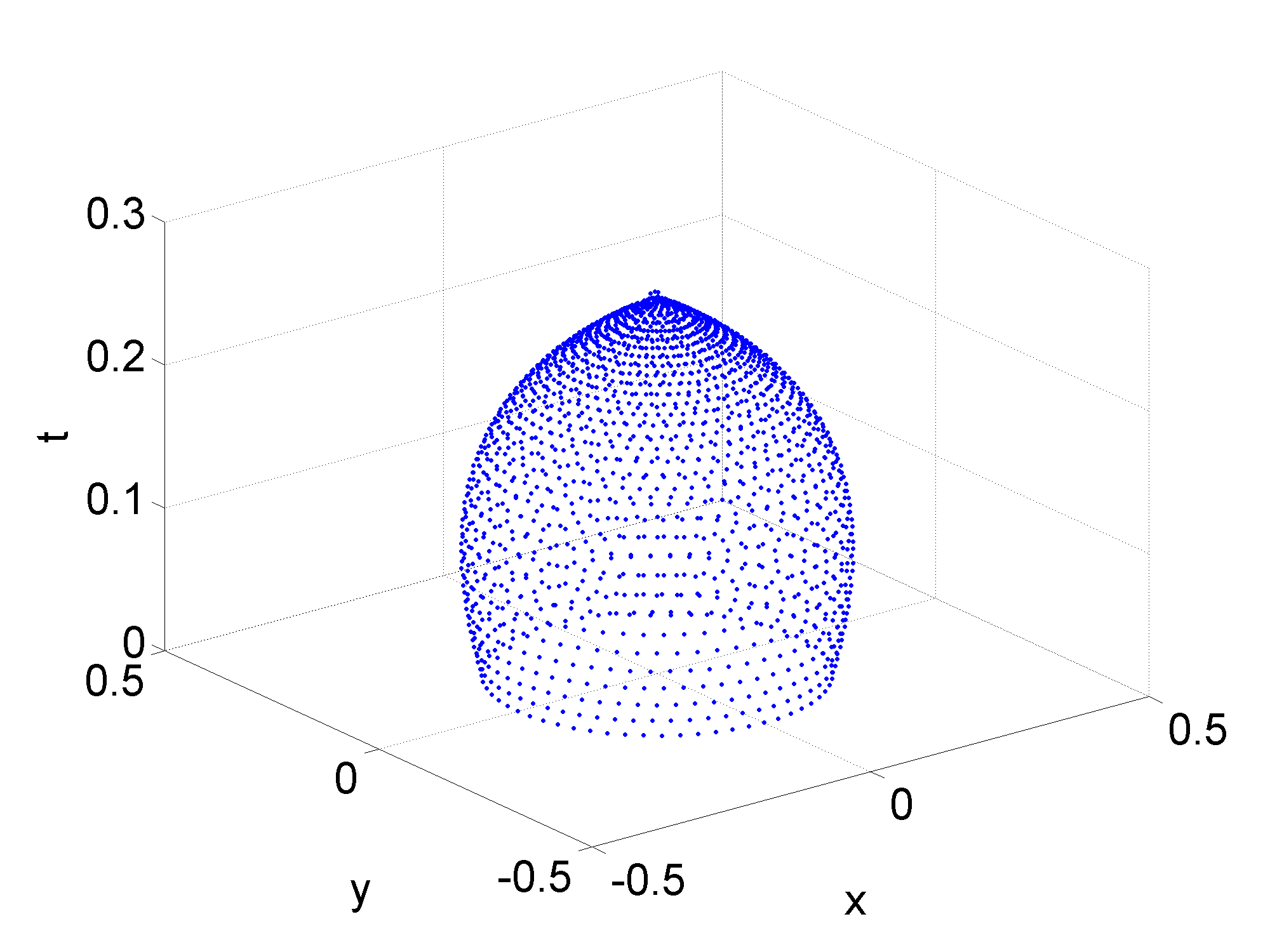}
  \caption{The football, \Ex \ExFootball 
}
\end{subfigure} 
\begin{subfigure}{.48\textwidth}
  \centering
  \includegraphics[width=\linewidth]{./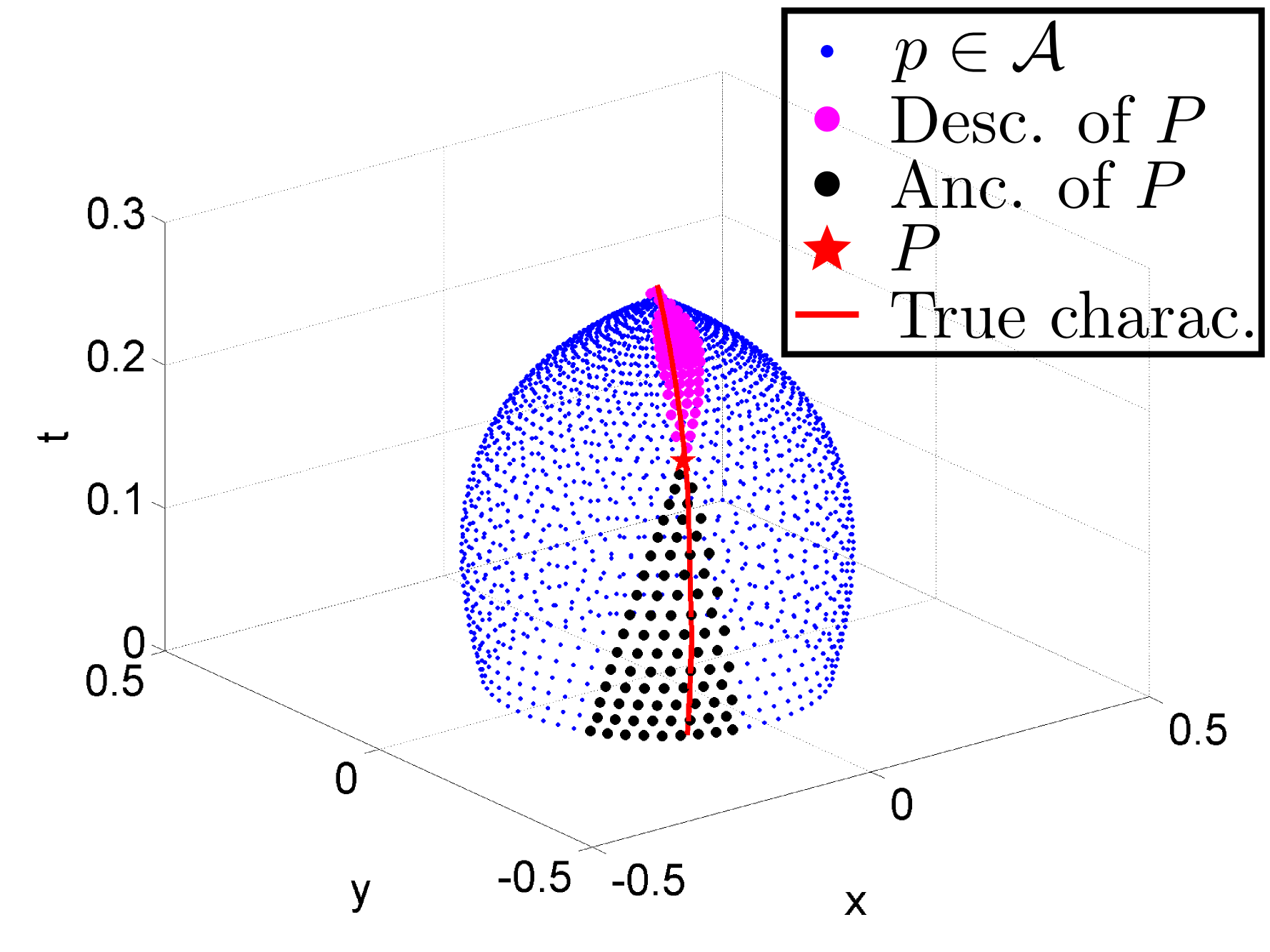}
  \caption{The football, \Ex \ExFootball  
}
\label{fig:SurfaceFootballWithRelationsModified}
\end{subfigure} 

\begin{subfigure}{.48\textwidth}
  \centering
  \includegraphics[width=\linewidth]{./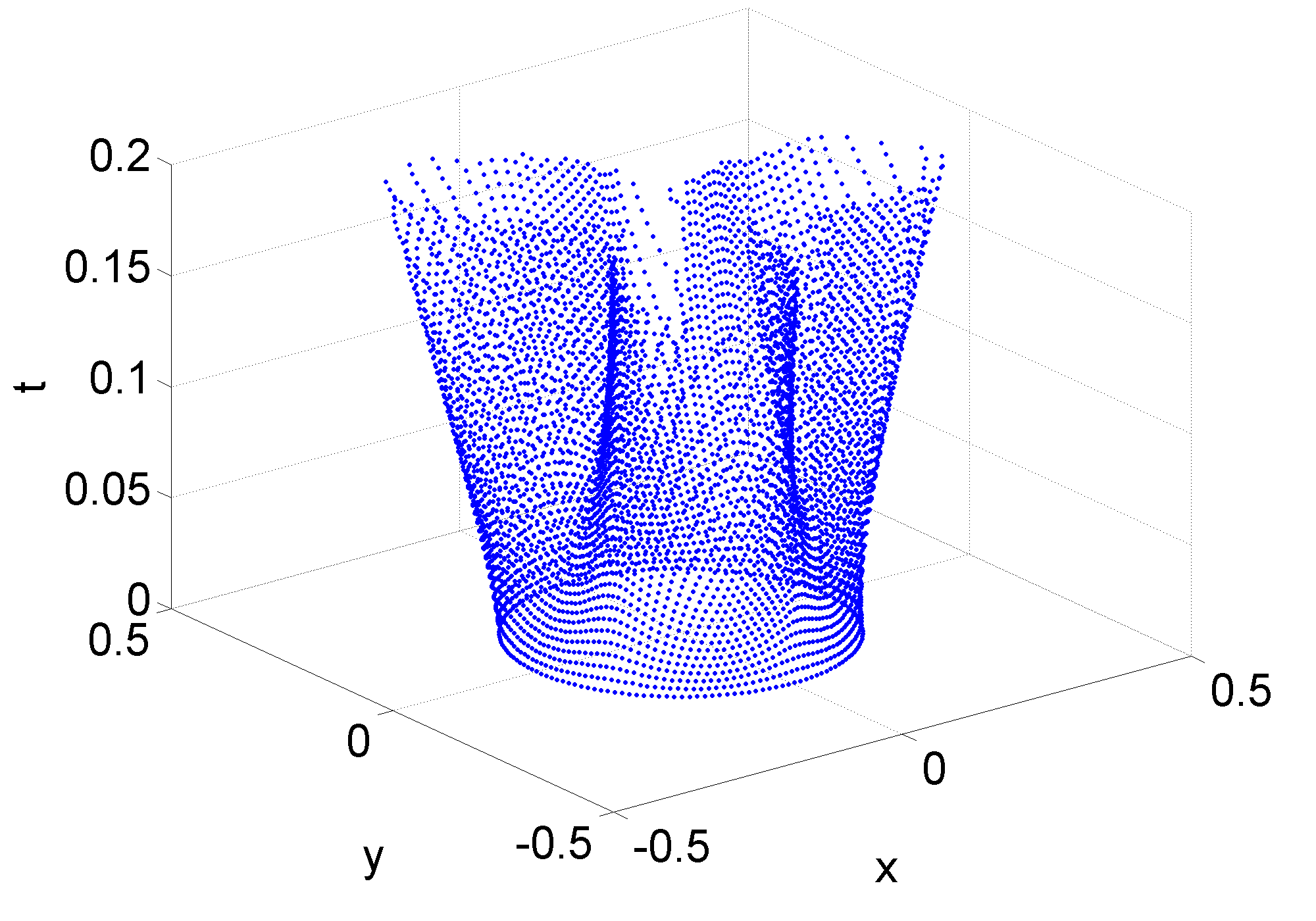}
  \caption{The 3-leaved rose, \Ex \ExThreeLeavedRose}
\end{subfigure}
\begin{subfigure}{.48\textwidth}
  \centering
  \includegraphics[width=\linewidth]{./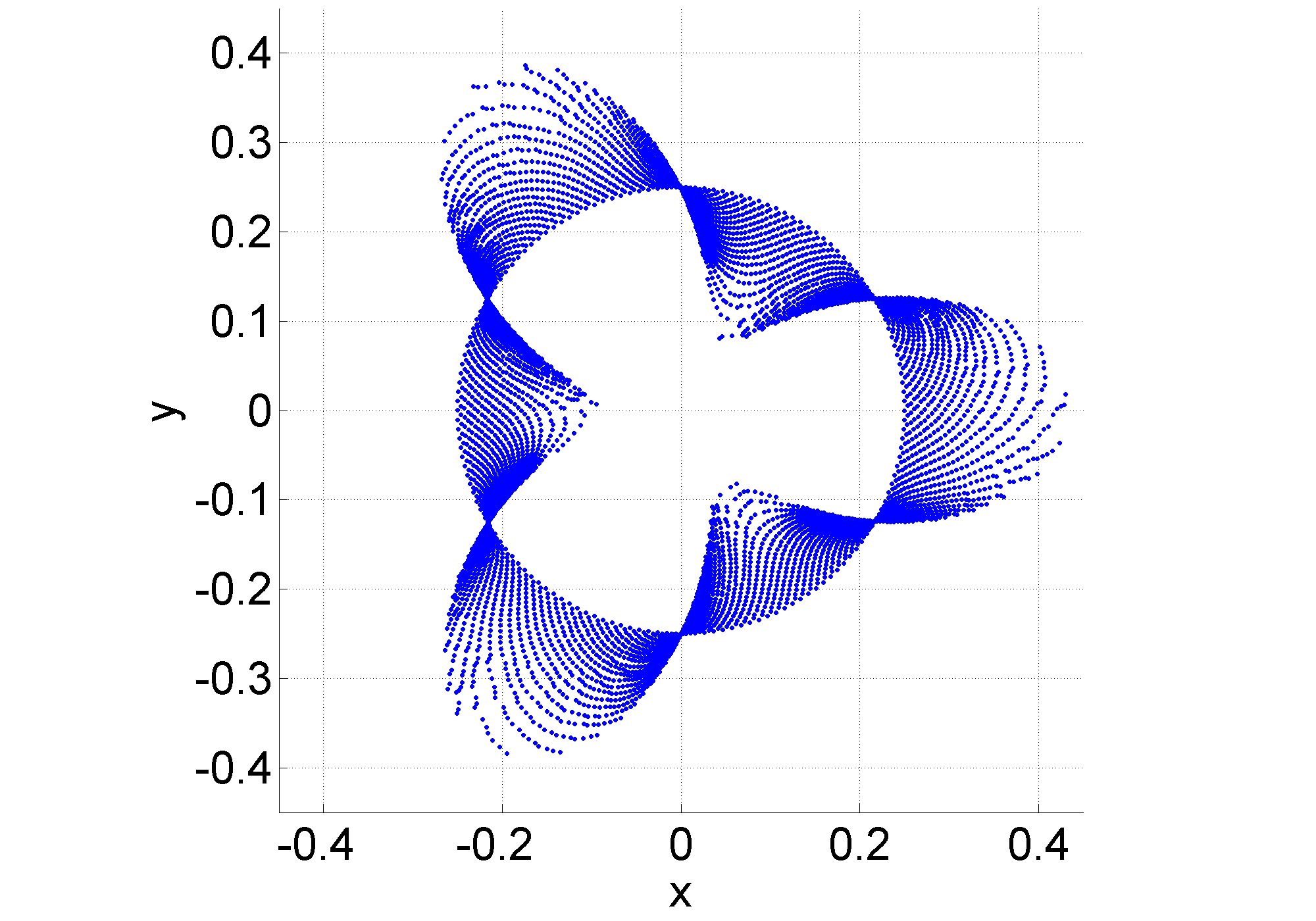}
  \caption{The 3-leaved rose, \Ex \ExThreeLeavedRose}
\end{subfigure}

		\caption{Sampling of $\MM$ returned by the the algorithm.}
	\label{fig:EvenSampling}
\end{figure}

\clearpage

\begin{figure}[h!]

\begin{subfigure}{.48\textwidth}
  \centering
  \includegraphics[width=\linewidth]{./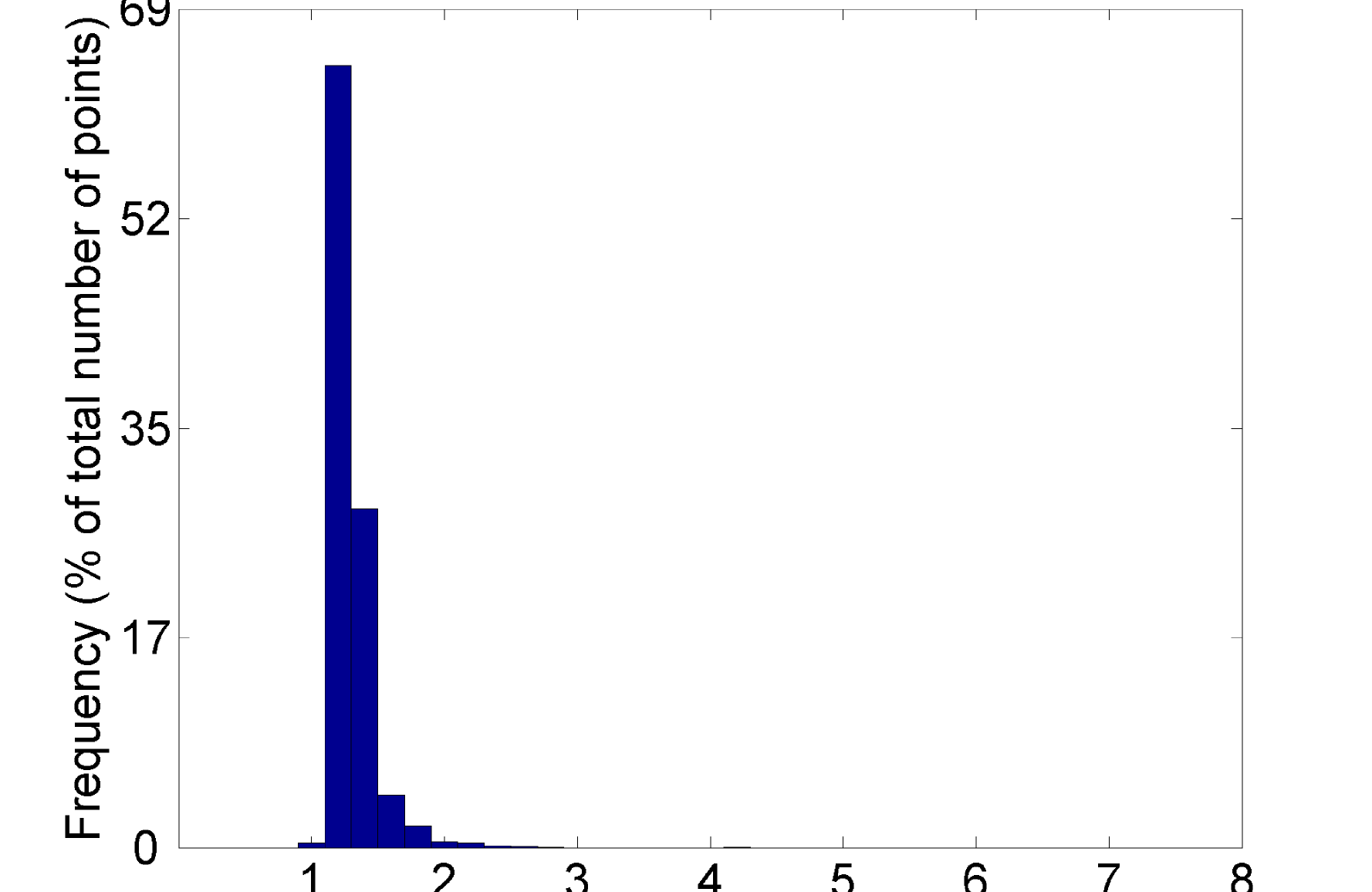}
  \caption{Distance to Parent $a$}
\end{subfigure} 
\begin{subfigure}{.48\textwidth}
  \centering
  \includegraphics[width=\linewidth]{./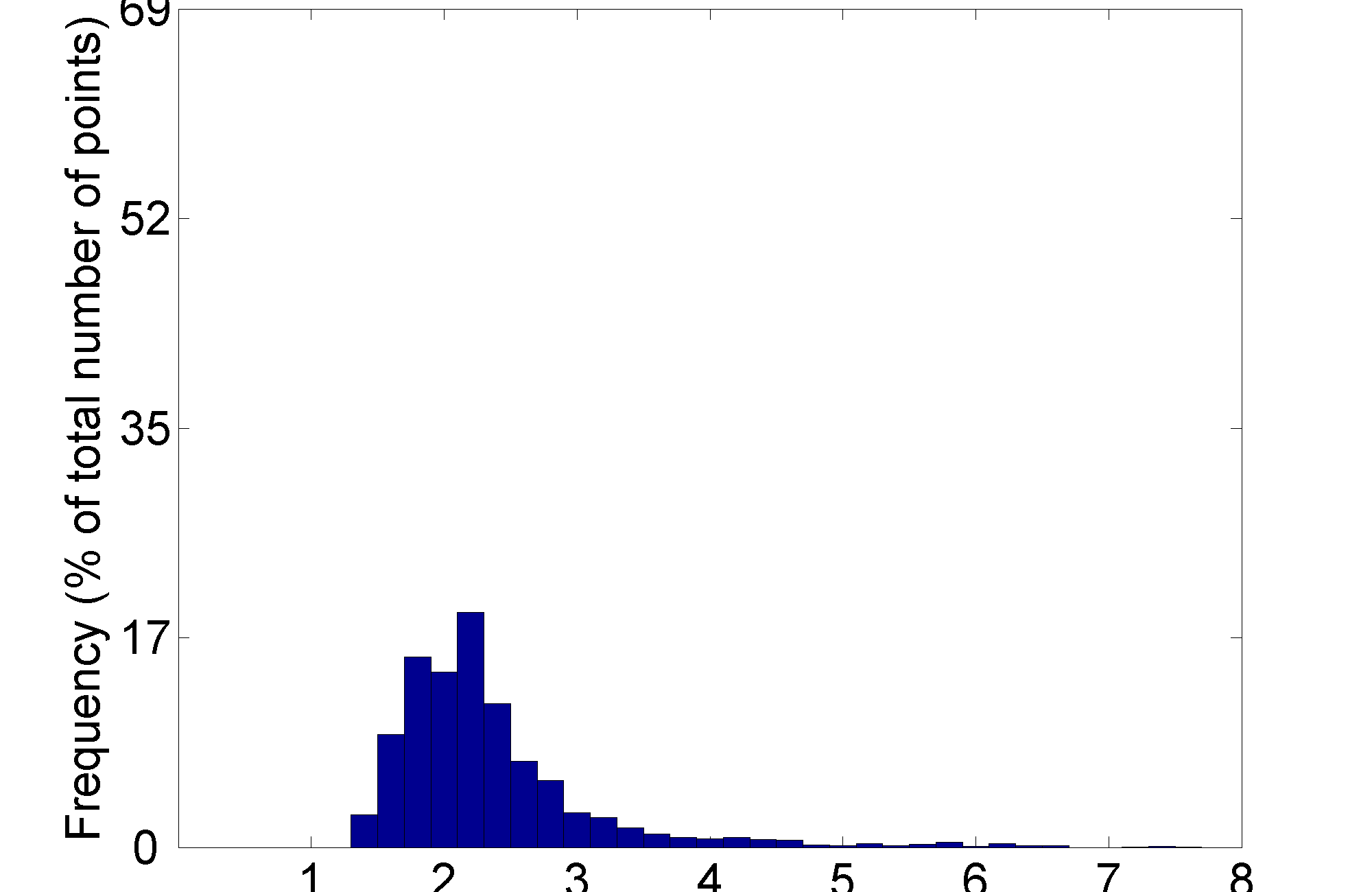}
  \caption{Distance to Parent $b$}
\end{subfigure}
\caption{Three-dimensional distance from a child to each of its parents, in units of $h$. 
}
\label{fig:Distance}
\end{figure}

\subsubsection{Domain of influence}

Subfigure \ref{fig:SurfaceExpandingCircle} may give the impression that the data propagate by spiralling outwards. However, this is not the case, as can be seen from tracking a point's ancestry and descendance, as on Subfigures \ref{fig:SurfaceExpandingCircleWithRelations} and \ref{fig:SurfaceFootballWithRelationsModified}. It is of particular interest to note that the true characteristic going through the point does lie in the numerical past and future domains of influence. 

\subsubsection{Topological changes} The two-circles example illustrates the ability of the algorithm to deal with topological changes. As can be seen from Figure \ref{fig:Topo}, the code naturally stops computing points when it reaches the $y$-axis. As a result, the circles appropriately merge. Their separation is also well-captured.

\begin{figure}[h!]

\begin{subfigure}{.48\textwidth}
  \centering
  \includegraphics[width=\linewidth]{./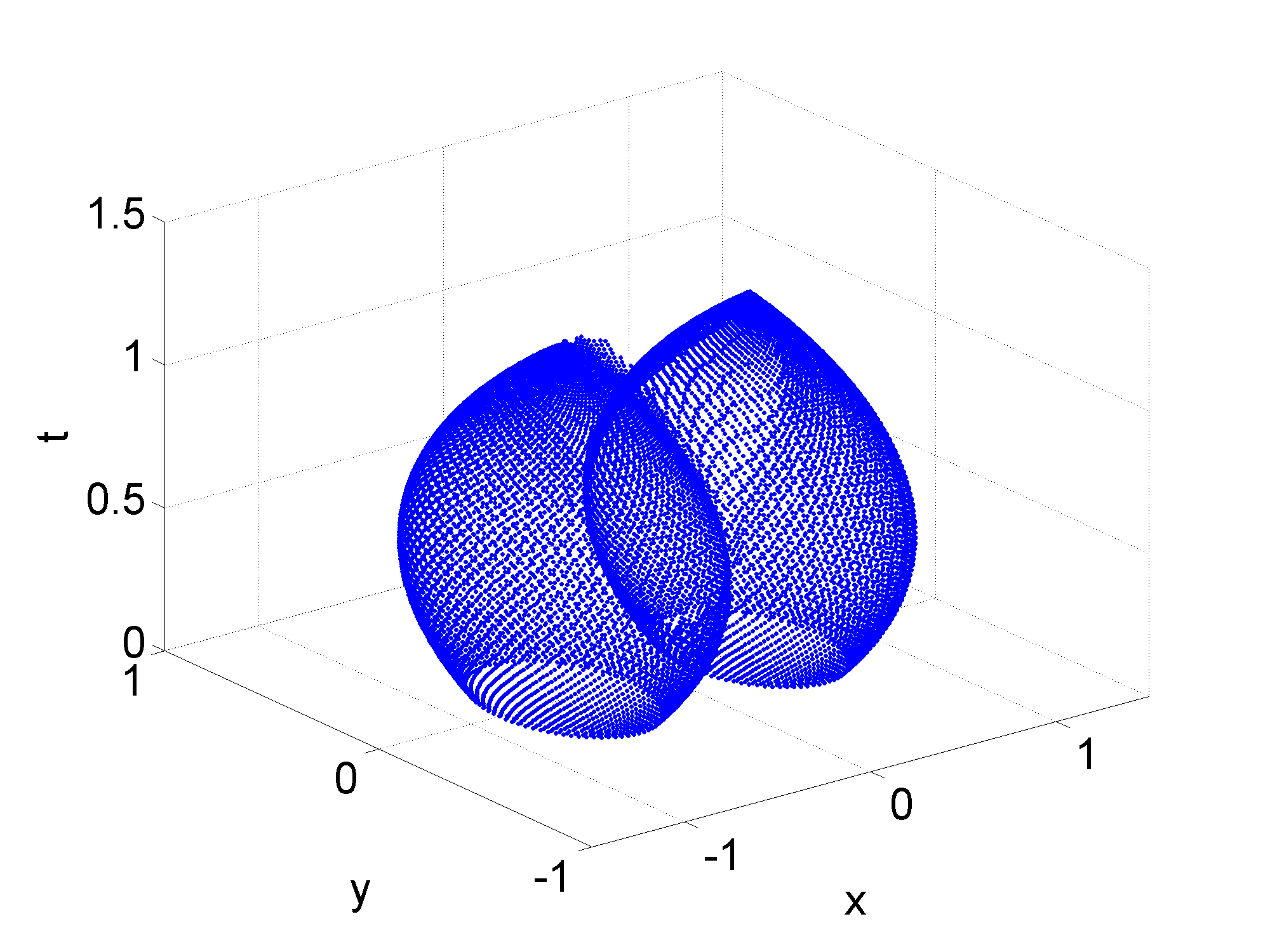}
  \caption{Full set.}
\end{subfigure} 
\begin{subfigure}{.48\textwidth}
  \centering
  \includegraphics[width=\linewidth]{./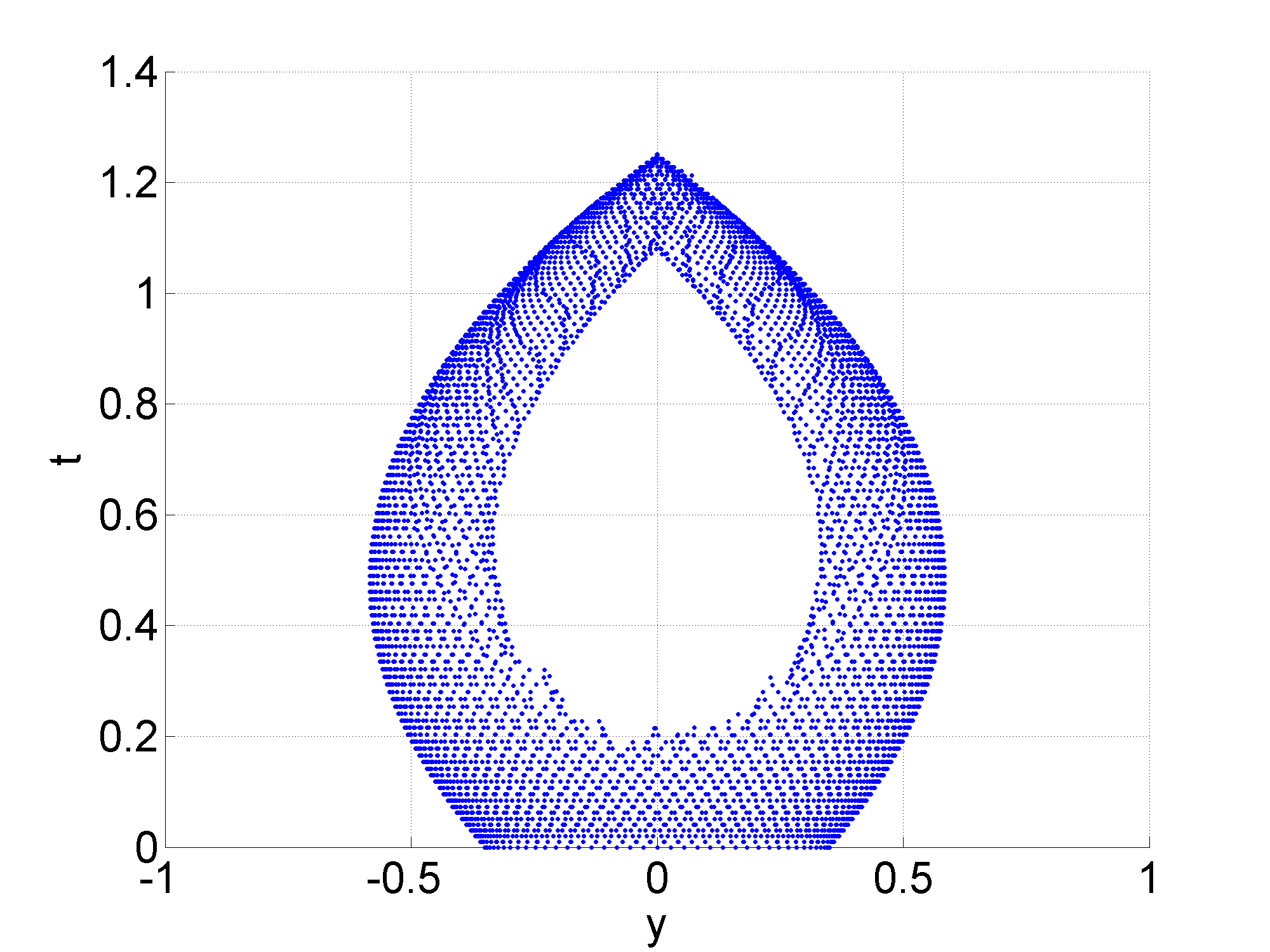}
  \caption{Side view of those points with $|x|<0.5$.}
\end{subfigure}
\caption{Sampling of $\MM$ returned by the algorithm for \Ex \ExTwoCircles, the two circles.}
\label{fig:Topo}
\end{figure}

\vspace{-.3cm}

\subsubsection{Convergence results}

Convergence results are presented in Figure \ref{fig:ConvergenceResults}, along with the exact and the reconstructed curves obtained for some simulations. 

\begin{figure}
\begin{subfigure}{.48\textwidth}
  \centering
  \includegraphics[width=\linewidth]{./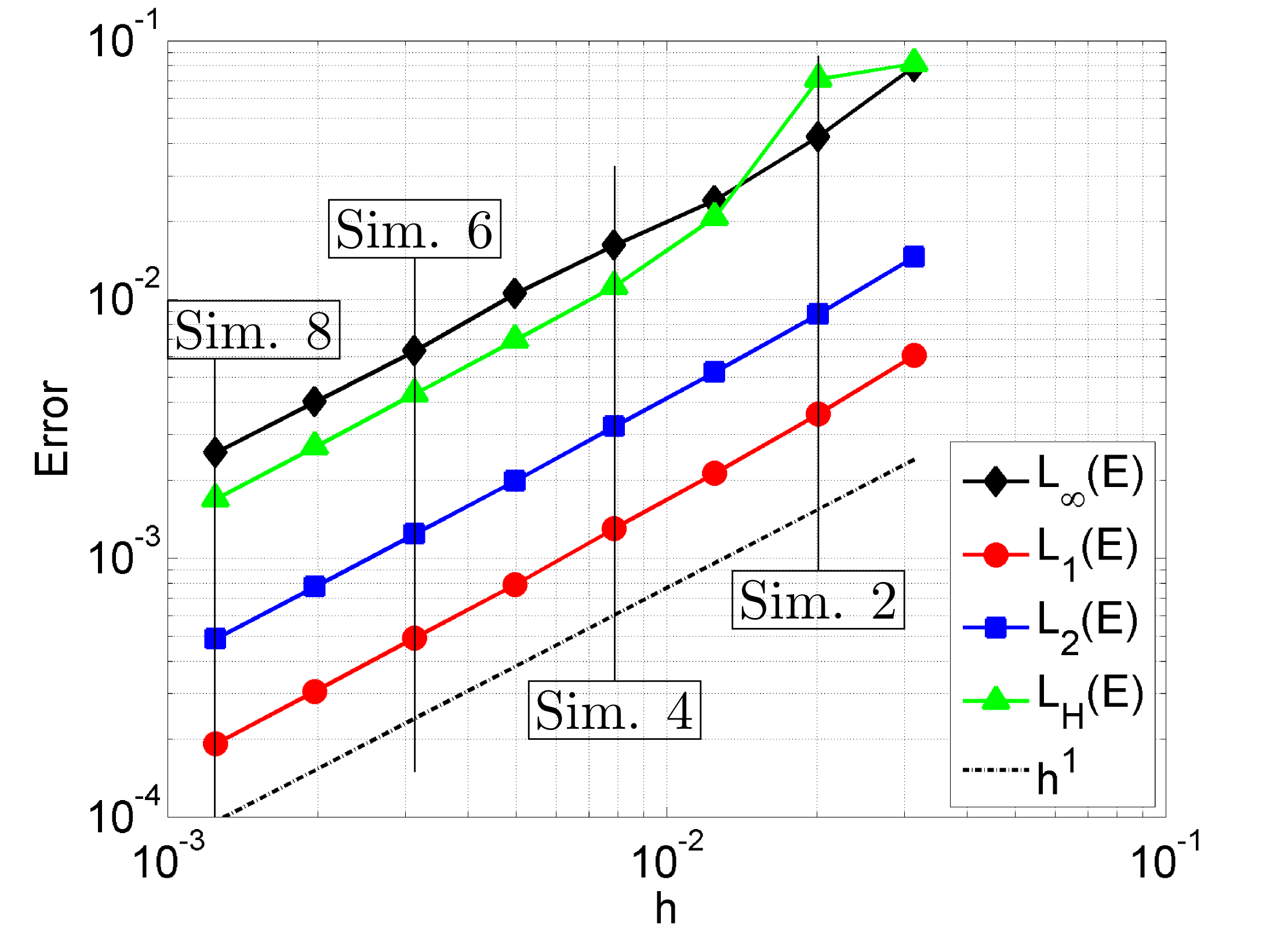}
  \caption{The escaping circle, \Ex \ExEscapingCircle.}
  \label{fig:Conve}
\end{subfigure}
\begin{subfigure}{.48\textwidth}
  \centering
  \includegraphics[width=\linewidth]{./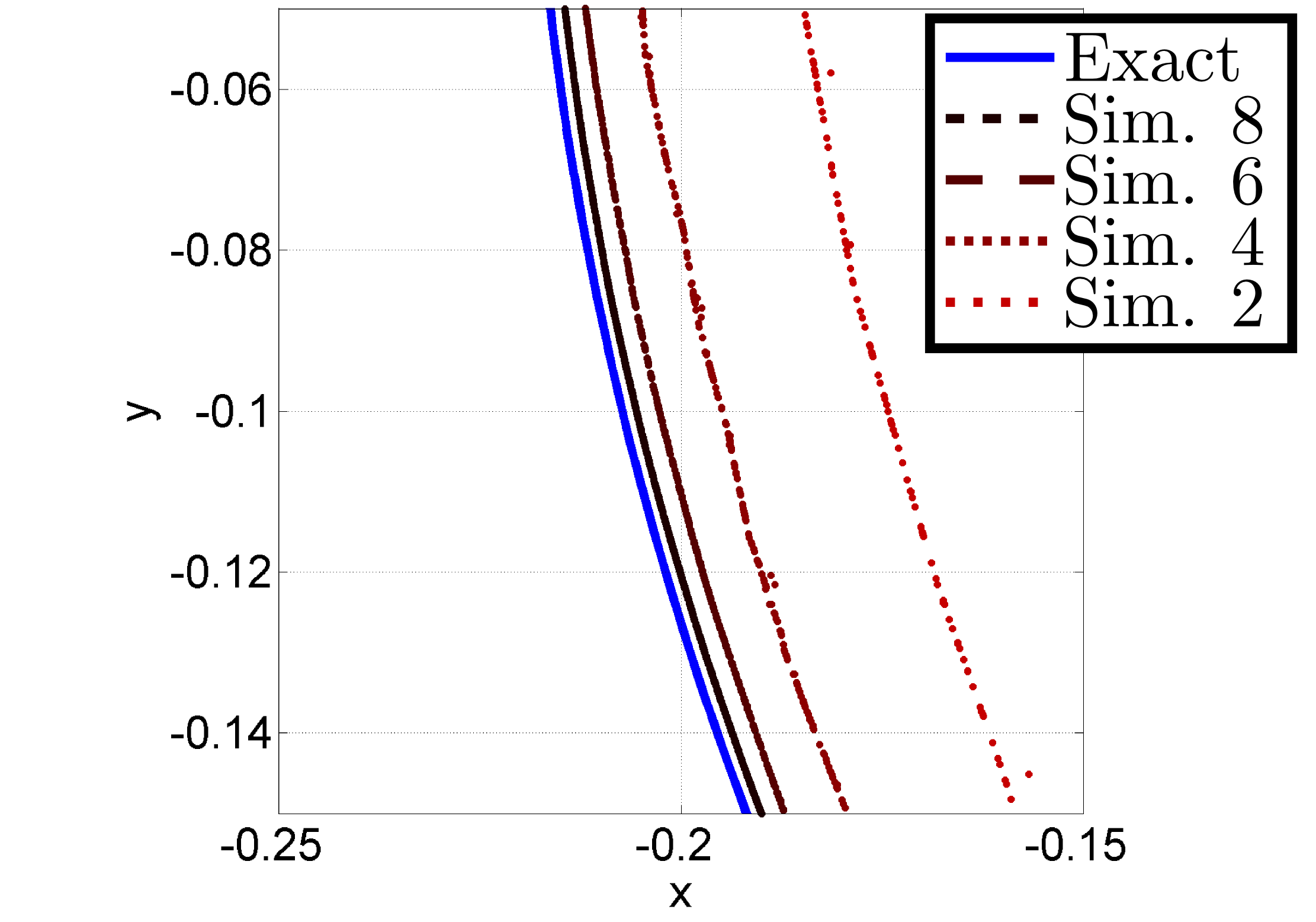}
  \caption{The escaping circle, \Ex \ExEscapingCircle.}
  \label{fig:Curvese}
\end{subfigure} 

\begin{subfigure}{.48\textwidth}
  \centering
  \includegraphics[width=\linewidth]{./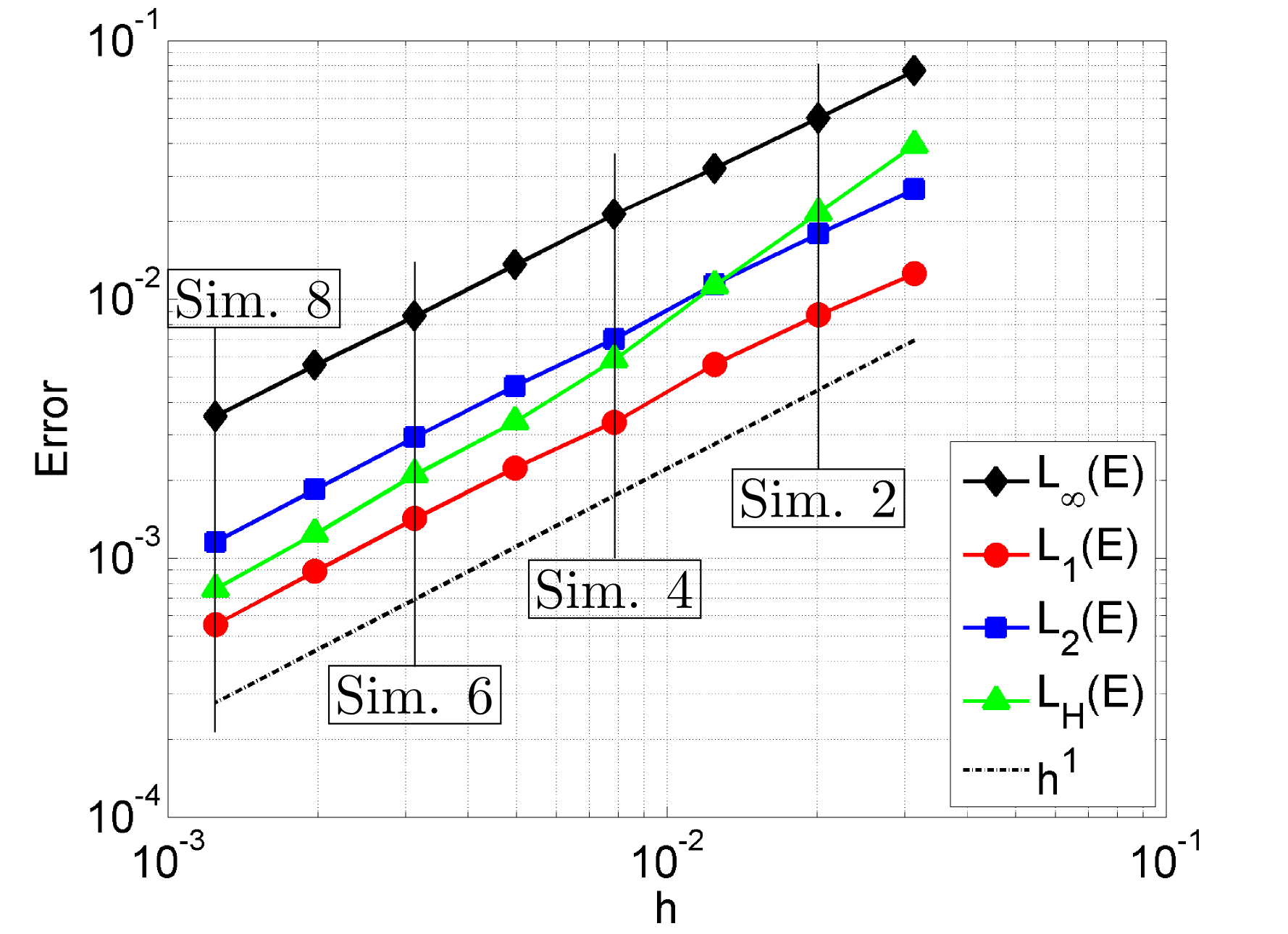}
  \caption{The football, \Ex \ExFootball.}
  \label{fig:Convb}
\end{subfigure}
\begin{subfigure}{.48\textwidth}
  \centering
  \includegraphics[width=\linewidth]{./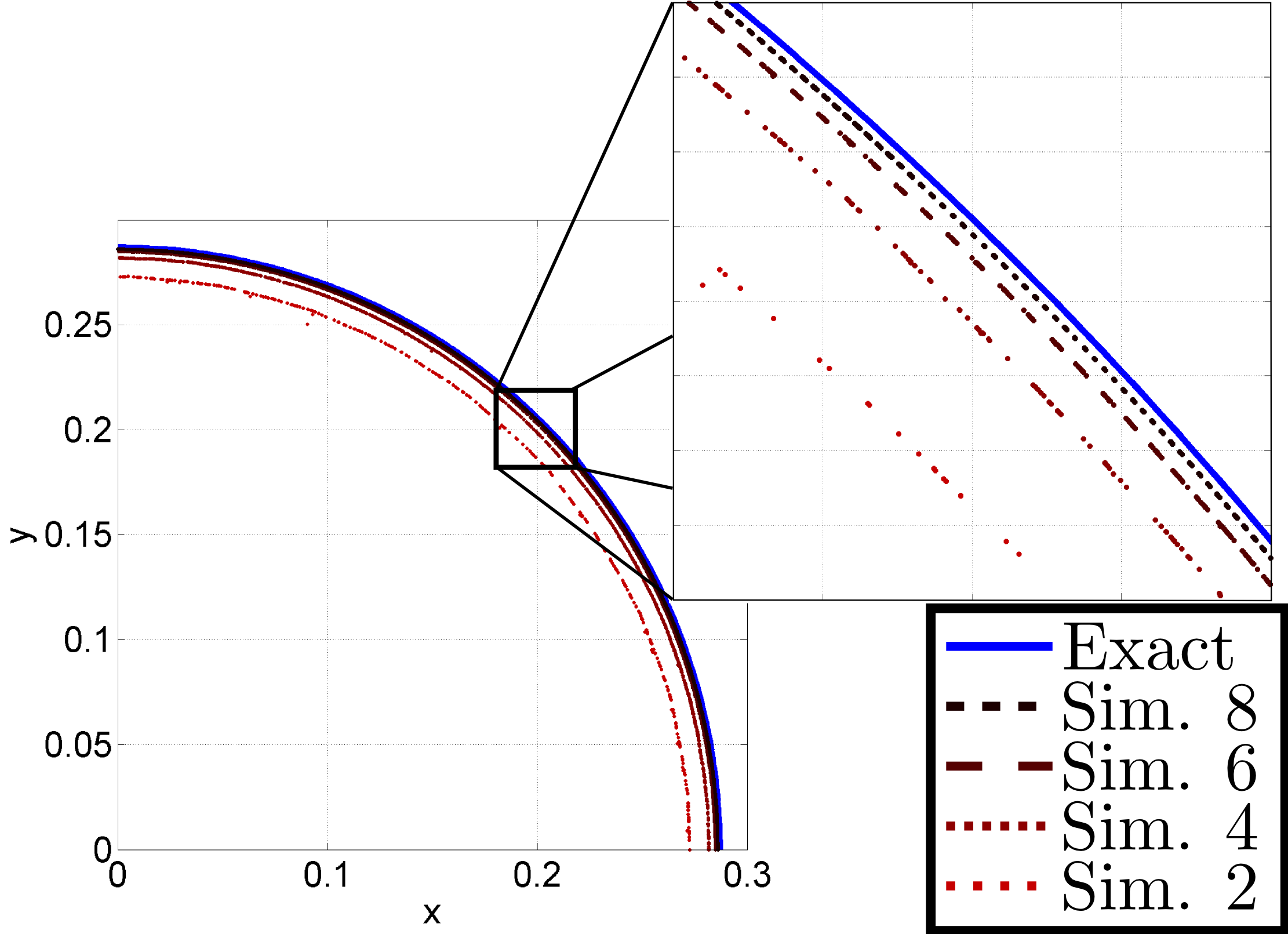}
  \caption{The football, \Ex \ExFootball.}
  \label{fig:Curvesb}
\end{subfigure} 

\begin{subfigure}{.48\textwidth}
  \centering
  \includegraphics[width=\linewidth]{./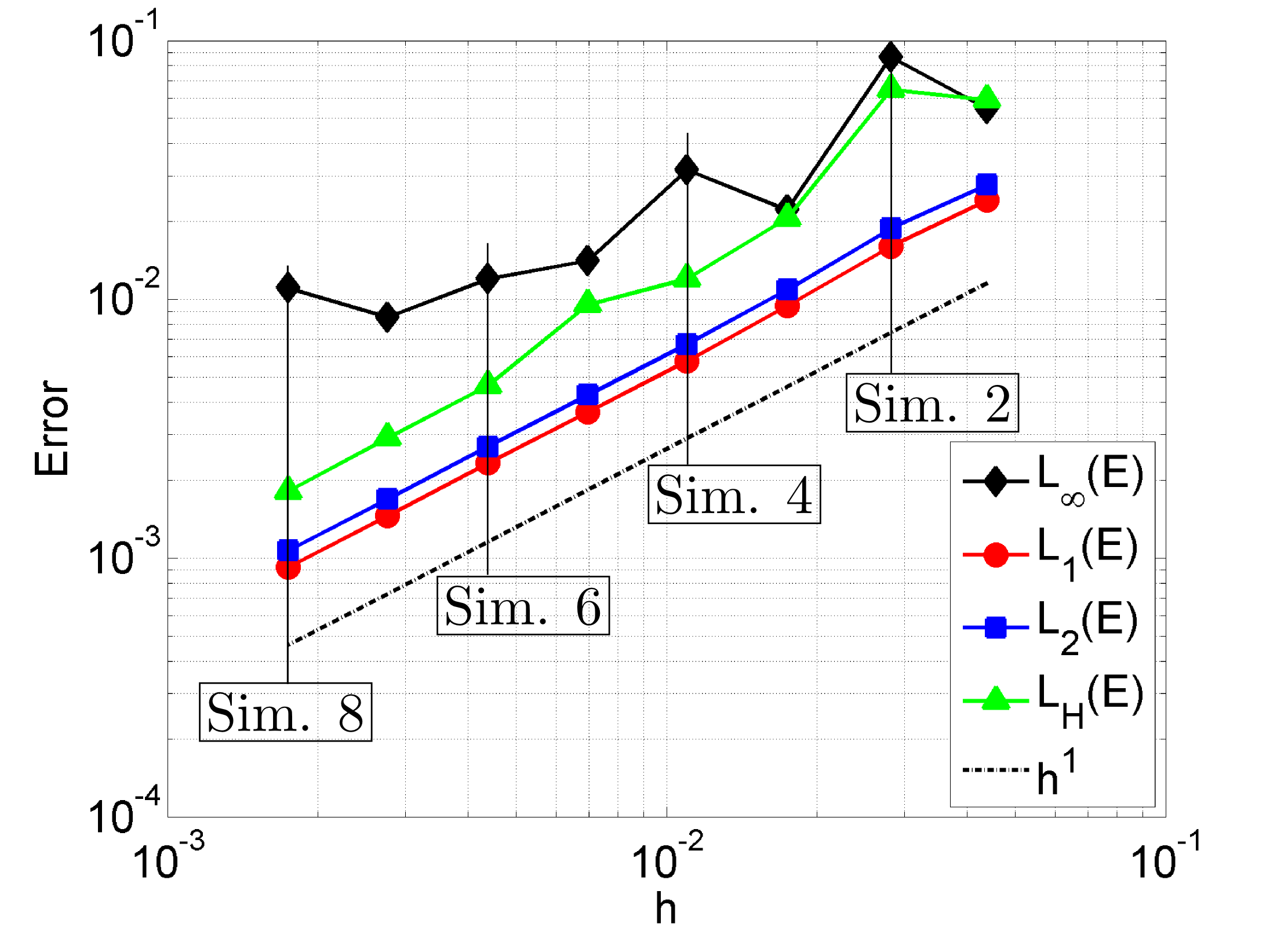}
  \caption{The two circles, Ex.\mbox{\,}\ExTwoCircles.\mbox{\,} (up to $T=0.5$)}
  \label{fig:Convf}
\end{subfigure}
\begin{subfigure}{.48\textwidth}
  \centering
  \includegraphics[width=\linewidth]{./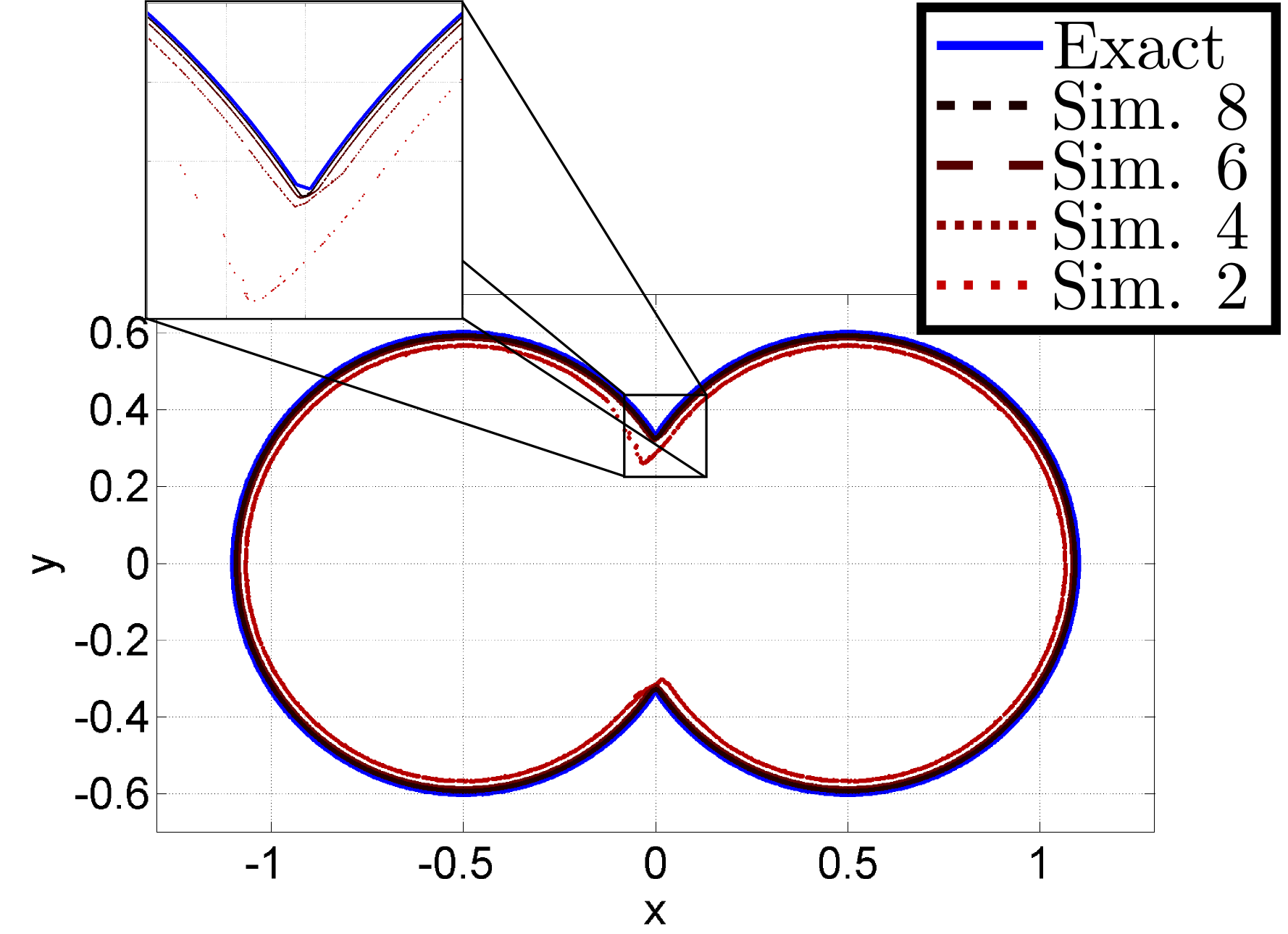}
  \caption{The two circles, \Ex \ExTwoCircles.}
  \label{fig:Curvesf}
\end{subfigure} 
\caption{\textbf{(left)} Global convergence results. \textbf{(right)} Exact and reconstructed curves.}
\label{fig:ConvergenceResults}
\end{figure}

\noindent Example \ExEscapingCircle \, is used to illustrate the robustness of the algorithm when $\MM$ is smooth. Qualitatively similar results are obtained for examples \ExOscillatingCircle, \ExEscapingCircle \, and \ExThreeLeavedRose, namely, first-order convergence with respect to $h$ is observed in all the norms considered. 
Similar results are obtained for the football example, despite the fact that $\MM$ is only $C^{0}$. The two circles example also converges with first-order accuracy in the $L_{1}$, $L_{2}$ and $L_{H}$ norms. Convergence in the $L_{\infty}$ norm is not as clear.

\subsubsection{Speed tests}

Our method is now compared to the standard first-order FMM, see for example \cite{Sethian}. The solution to \Ex (a) is computed on the set $|\xvec|<0.75$ for various gridsizes -- see Appendix \ref{app:TestsProcedures}. The CPU times are presented in Subfigure \ref{fig:SpeedTest}. Remark that the vertical axis is \emph{the $L_{1}$ norm of the error} associated with the sampling of $\MM$. It is apparent that for higher accuracies, our method is faster than the standard FMM for this example. 
Subfigure \ref{fig:SpeedTestNonMonotone} also presents CPU times obtained for non-monotone examples: The trend is found to be similar to the monotone case. 

\begin{figure}[h!]

\begin{subfigure}{.48\textwidth}
  \centering
  \includegraphics[width=\linewidth]{./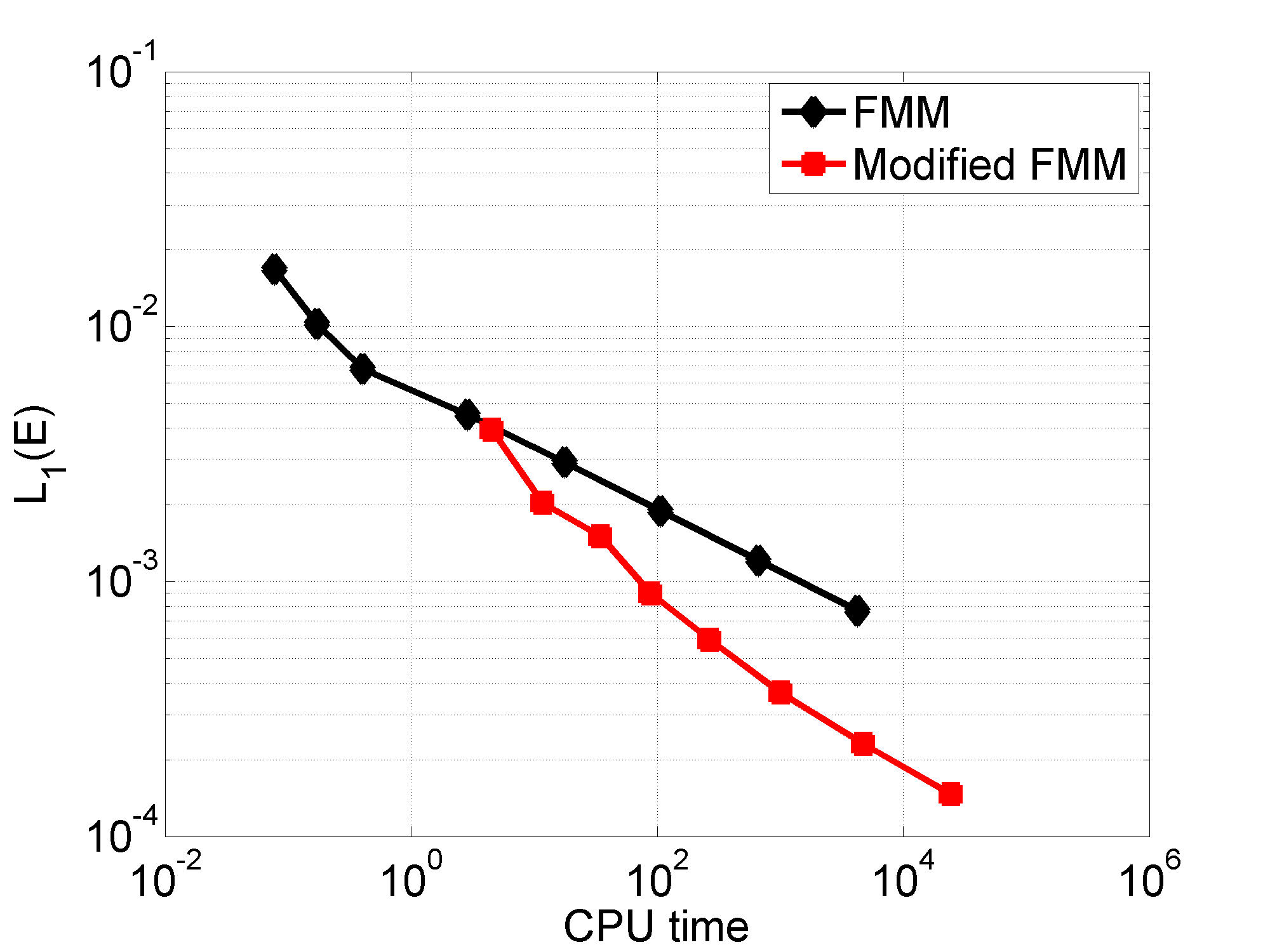}
  \caption{The expanding circle, \Ex \ExExpanding.
}
\label{fig:SpeedTest}
\end{subfigure} 
\begin{subfigure}{.48\textwidth}
  \centering
  \includegraphics[width=\linewidth]{./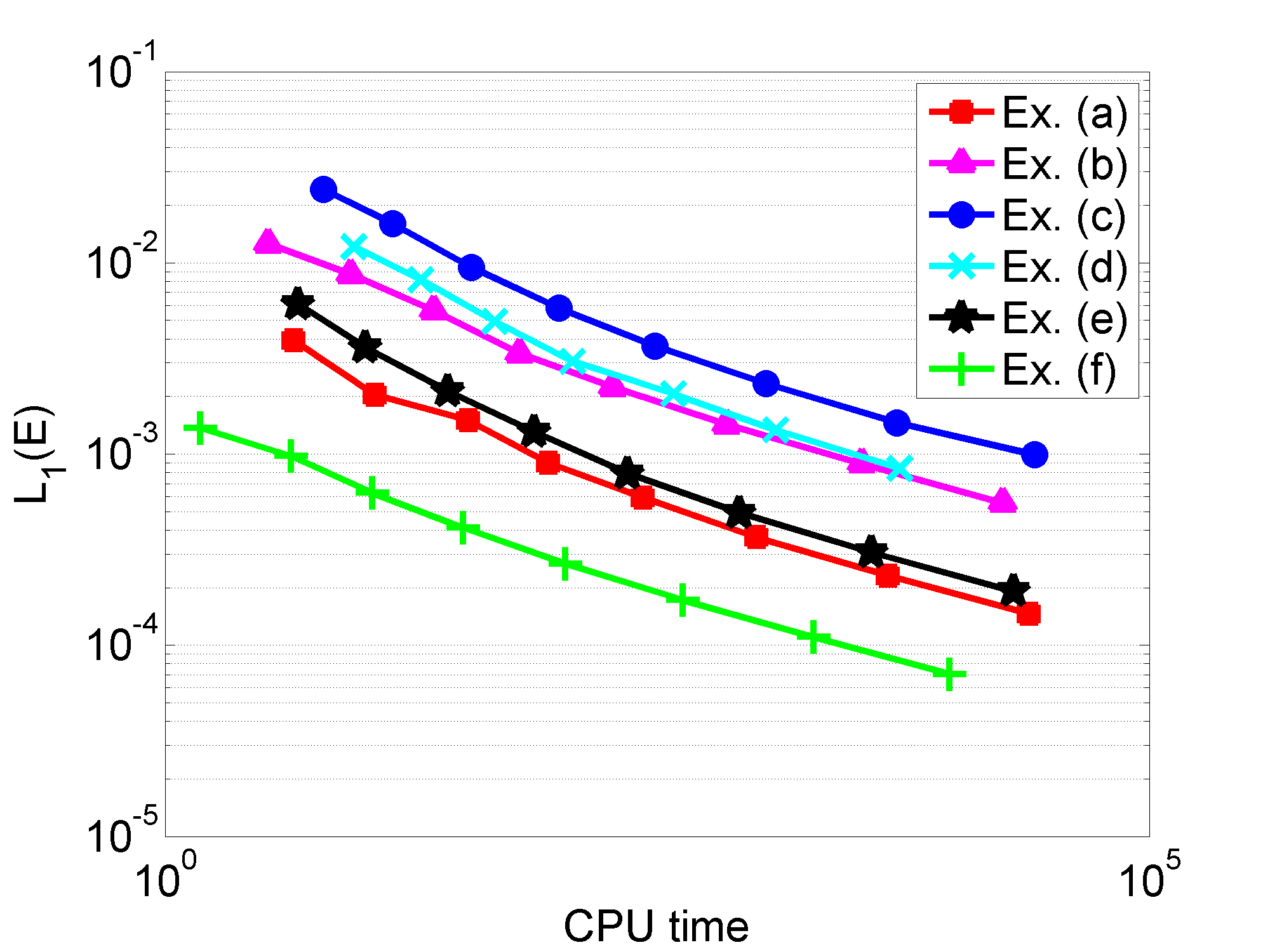}
  \caption{Results obtained using our method.}
\label{fig:SpeedTestNonMonotone}
\end{subfigure}
  \caption{Accuracy of the solution \vs CPU times.}
\end{figure}

\textsl{Remark} (See Appendices  \ref{app:IterativeSolver} and \ref{app:GridMethod}.) We point out a few changes that can be made to improve the computational speed of the code. As noted at the end of \S \ref{subsubsec:OptimizationProperties}, constraints (E) and (V2) are not necessarily equivalent. However, since in practice (V2) $\Longrightarrow$ (E) for an overwhelming number of points, the verification of constraint (E) may be taken out of the grid method, and checked \textsl{a posteriori}. Based on Figure \ref{fig:LocalConvergence}, running the iterative solver does not necessarily improve the accuracy of the solution. Making the stopping criterion depend on $h$, \eg $|w^{n+1}_{d}-w^{n}_{d}|\leq 10h \Delta \tau$ may avoid unnecessary computations.

~\\
\noindent \textbf{Conclusion.} 
We presented a scheme that describes the non-monotone propagation of fronts while featuring a numerical complexity comparable to that of the standard FMM. 
Local convergence was demonstrated and verified. Evidences of global convergence were supported by several examples where $F\in C^{1}$. 
The most general case, \ie $F \in C^{0}$, requires further theoretical investigation, and will therefore be the subject of subsequent papers.  
The theory presented in \S \ref{sec:Hyperplane} and \ref{sec:Discretization} trivially extends to higher dimensions. Nonetheless, some practical obstacles currently prevent the design of an algorithm when $n>2$. The most prominent one is to properly monitor the global features of the manifold, as in Algorithm \ref{algo:BookKeeping} -- see \cite{GlimmGroveEtAl1,GlimmGroveEtAl2,FronTierDocu} for the case $n=3$. Moreover, choosing an appropriate triplet of parents still requires some thought.  

As it stands, the algorithm is first order accurate. It may be extended to higher order using filtered schemes \cite{FroeseOberman,ObermanSalvador}. 
Allowing $h$ to depend on $(\xvec,t)$ would increase the accuracy in regions with high mean curvature, and avoid unecessary computations in other regions. The precise adaptivity criteria must be carefully addressed. 
A different approach is to resort to \emph{reseeding} by introducing points to the Narrow Band in regions of the front that are undersampled. 
We conclude by remarking that the novel ideas presented in this paper may apply to other evolution equations, such as linear advection, anisotropic propagation, or mean curvatuve flow.

\clearpage

\appendix

\section{$uvw$-coordinate system}
\label{app:A}\subsection{Tilted plane} Let a unit normal vector $\nav = (n_{1},n_{2},n_{3}) \in S^{3}$ define a plane through the origin of the form $t=ax+by$, where $a=-n_{1}/n_{3}$ and $b=-n_{2}/n_{3}$. 
Define $\mu = \sqrt{1+a^{2}+b^{2}}$, as well as $\hat{w} = \pm (a,b,-1)/\mu$ where the sign is chosen such that $\mathrm{sign}(\hat{w}_{3}) = \mathrm{sign} (n_{3})$. The following coordinate system can be verified to satisfy the conditions listed in \S \ref{sec:Hyperplane}:
\begin{eqnarray}
\label{eq:CoeffExplicit}
\left\{ \begin{array}{rcl}
u &=& \pm \frac{1}{\sqrt{a^{2}+b^{2}}} \left(~ b ~,~ -a ~,~ 0 ~\right) \\
v &=& \frac{1}{\sqrt{a^{2}+b^{2}} \, \mu} \left(~ a ~,~ b ~,~ a^{2}+b^{2} ~\right) \\
w &=& \pm \frac{1}{\mu} \left(~ -a ~,~ -b ~,~ 1 ~\right) 
\end{array} \right.
\end{eqnarray}

\subsection{Vertical plane}
\label{subsec:VerticalPlane}
If $n_{3}=0$, then $\bar{\nu} \in S^{3}$ describes a vertical plane. The orthonormal coordinate system we use is then: 
\begin{eqnarray}
\left\{ \begin{array}{rcl}
u &=& \left(~ -n_{2} ~,~ n_{1} ~,~ 0 ~\right) \\
v &=& \left(~ 0 ~,~ 0 ~,~ 1 ~\right) \\
w &=& \left(~ n_{1} ~,~ n_{2} ~,~ 0 ~\right) 
\end{array} \right.
\end{eqnarray}

\subsection{$\bar{\nu}$ is the outward normal to $\MM$}
\label{subseq:NuIsNormal}
Suppose that $\MM$ is $C^{1}$ at some point $p_{a}$ and that $\bar{\nu} = \nthree(p_{a})$. If $|F(p_{a})| >0$, then a neighbourhood of $p_{a}$ is locally described by $\psi(\xvec)$, where $\psi$ satisfies \cite{Sethian}:
\begin{eqnarray}
\label{eq:PDEquick}
|\nabla \psi(\xvec_{a})| = \frac{1}{|F(\xvec_{a})|}
\end{eqnarray}
Then $\phi(\xvec,t) = \mathrm{sign}(F(p_{a}))\, (\psi(\xvec)-t)$, and we have:
\begin{eqnarray}
\nthree 
\, = \, \mathrm{sign}(F(p_{a})) \frac{(\psi_{x},\psi_{y},-1)}{\sqrt{\psi^{2}_{x} + \psi^{2}_{y} + 1}}
\end{eqnarray}
Using the PDE (\ref{eq:PDEquick}) and the results from \S \ref{subsec:VerticalPlane} with $a=\psi_{x}$ and $b=\psi_{y}$, we get:
\begin{eqnarray}
\beta_{3} 
= \frac{1}{\sqrt{1+F^{2}(p_{a})}} 
\qquad \mathrm{and} \qquad
\gamma_{3} 
= \frac{-F(p_{a})}{\sqrt{1+F^{2}(p_{a})}}
\end{eqnarray}
If $F(p_{a}) =0$, then $\nthree(p_{a})$ describes a vertical plane, and $\beta_{3}=1$ and $\gamma_{3}=0$.

\section{The iterative solver} 
\label{app:IterativeSolver}
The size of the pseudo-time step used in the iterative solver is determined as follows. Defining the left-hand side of (\ref{eq:IterativeSolver}) as $-\tilde{H}(w^{n})$, we set  $\Delta \tau = \frac{9}{10}\, \frac{Q}{2\epsilon}$ where 
\begin{eqnarray*}
&~& | \tilde{H}(w^{1}) - \tilde{H}(w^{2}) | \\
&\leq& | \left( \tilde{\nu}^{2} - \tilde{\nu}^{1} \right) \cdot \hat{R} | + | G^{1} | ~ \left| \sqrt{\tilde{\nu}^{1} \cdot \tilde{\nu}^{1} - \left( \tilde{\nu}^{1} \cdot \hat{R} \right)^{2}} - \sqrt{\tilde{\nu}^{2} \cdot \tilde{\nu}^{2} - \left( \tilde{\nu}^{2} \cdot \hat{R} \right)^{2}} \right| \\
&~& \quad + | G^{1} - G^{2} | \sqrt{\tilde{\nu}^{2} \cdot \tilde{\nu}^{2} - \left( \tilde{\nu}^{2} \cdot \hat{R} \right)^{2}} ~ =: ~ Q
\end{eqnarray*} 
$\epsilon = h/10$ and $w_{1}$, $w_{2} \in (w^{0}_{d}-\epsilon,w^{0}_{d}+\epsilon)$. The neighbourhood is sampled with 10 points.

\section{The grid method: Pseudo-code} 
\label{app:GridMethod}
This is the method we use in practice. Note that Lines 6-10 can be completely vectorized. 

\begin{algorithm}
\caption{Constrained Optimization: The grid method} \label{algo:GridMethod}
\begin{algorithmic}[1]

	\State $\bar{h} \gets h/2$, $~u_{0} \gets u_{\min}$, $~v_{0} \gets h + v_{a}$, 
	\State $Q=1$, ~tol $=10^{-15}$, ~$i=1$, ~$f_{0}=10^6$. 

	\While{$Q > $tol} 
		\State Build a grid of $s\times s$ points, with meshsize $h$, centered at $(u_{0},v_{0})$
		\State On the grid, initialize $f=+\infty$.

		\For{each point $(u,v)$ on the grid}
			\State Compute $w$ using the direct solver (using $G(\uvec_{a})$).
			\State Compute $g_{1}$, $g_{2}$ and $g_{3}$. 
			\State If all three constraints are satisfied, compute $f(u,v)$. 
		\EndFor 
		\State Let the minimum of $f$ be $f_{i}$ and occur at $(u_{d},v_{d})$.
		\State $Q = |f_{i}-f_{i-1}|$
		\State $\bar{h} \gets \bar{h}/2$, $~u_{0} \gets u_{d}$, $~v_{0} \gets v_{d}$, $i \gets i+1$		
		\State \textbf{if} $i == 5$ or $f_{i} == +\infty$ \textbf{then} $Q = 0$ \textbf{end if}
	\EndWhile 

\State Return: $(u_{d},v_{d})$.

\end{algorithmic}
\end{algorithm}

\newpage

\vspace{-.3cm}

\section{Examples \& Tests procedures}
\label{app:TestsProcedures}

\subsection{Examples} With the exception of the third example, $\CC_{0}$ always consists of a single circle of radius $r_0$ centered at the origin.

\paragraph{The expanding circle} The speed is $F \equiv 1$ and the signed distance function that solves the LSE is $\phi(x,y,t) = \sqrt{x^{2}+y^{2}}-t-r_{0}$ with $r_{0}=0.25$.

\paragraph{The football} The speed is $F(t)=1-e^{10t-1}$ and the signed distance function that solves the LSE is $\tilde{\phi}(x,y,t) = \sqrt{x^2+y^2} - \left( r_0-\left(e^{ct}-1\right)/(ce)+t \right)$ with $r_{0}=0.25$.

\paragraph{Two circles} The speed is $F(t)=1-2t$ and the signed distance function that solves the LSE up to time $T=0.5$ is $\phi(x,y,t) = \sqrt{(x- \mathrm{sign}(x)0.5)^2+y^2} - \left( r_{0}+t-t^{2} \right)$ with $r_{0}=0.35$.

\paragraph{The oscillating circle} The speed is $F(t)=a\sin(b(t+c))$ with $a=0.7$, $b=10$ and $c=0.3$, and $\phi(x,y,t) = \sqrt{x^2+y^2}- \left(  r_0 + (a/b)( \cos(bc)-\cos(b(t+c)) ) \right)$ is the signed distance function that solves the LSE with $r_{0}=0.25$.

\paragraph{The escaping circle} The speed is $F = (x-gt)(g't+g)/\sqrt{(x-gt)^{2}+y^{2}} + c$ where $g(t) = \arctan \left(b(t-0.5)\right) + \frac{\pi}{2}$,
and the signed distance function that solves the LSE is $\phi(x,y,t) = \sqrt{(x-g t)^2+y^2}-\left( r_{0}+c t \right)$ with $r_{0} = 0.25$, $b=10$ and $c=0.5$.

\paragraph{The 3-leaved rose} The speed is $F = \cos(l\theta) / \sqrt{1 + (lt/r)^2 (\sin(l\theta))^2 }$ and a solution of the LSE is $\phi(x,y,t) = r-(t\cos(l\theta)+r_0)$, where $l=3$ is the number of petals and $r_{0} = 0.25$. Note that this is not the signed distance function. 

\subsection{Tests procedures}

\paragraph{Time for computing $\uvec_{d}$} The estimate of the time taken to compute $\uvec_{d}$ provided in the remark at the end of \S \ref{subsubsec:ComputationalTime} is based on example \ExThreeLeavedRose, run with $m=150$. 

\paragraph{Optimization problem} Each point is obtained as follows: The parent points are exact: $p_{a}$, $p_{b} \in \MM$. The point $p_{a}$ is fixed, and $p_{b}$ is such that $(x_{b},y_{b}) = (x_{a},y_{a})+(-3,4)h/8$. The exact normal at $p_{a}$ is used to get the local coordinate frame. The optimization problem is solved using the direct solver and returns $p^{\mathrm{dir}}_{d}$. Then the iterative solver is initialized with $w^{0}_{d} = w^{\mathrm{dir}}_{d}$ and run until $|w^{n+1}_{d}-w^{n}_{d}|<10^{-10}\Delta \tau$. 

\paragraph{Error propagation} 
Figures \ref{fig:ErrorVsTimec} and \ref{fig:ErrorVsTimed} were produced using $m=60$. 

\paragraph{Global properties} 
The results presented on Figure \ref{fig:EvenSampling} were obtained using $M=25$ for the expanding circle, $M=60$ for the football, and $M=150$ for the 3-leaved rose.  The histograms of Figure \ref{fig:Distance} correspond to the football simulation. The binwidth is 0.2 and the total number of points $N$ is 6229. On Figure \ref{fig:Topo}, there are initially 80 points on each circle. The data used to get the results presented on Figure \ref{fig:ConvergenceResults} are: Ex.\mbox{\,}\ExEscapingCircle \, Final T $=0.4$, $t_{H} = 0.35$, Ex.\mbox{\,}\ExFootball \, $t_{H} = 0.1$, Ex.\mbox{\,}\ExTwoCircles \, Final T $= 0.5$, $t_{H} = 0.45$.

\paragraph{Hausdorff distance} To get the Hausdorff distance between the exact and the reconstructed curves, samplings of each set were first obtained. For the exact one, we used the exact solution to the LSE and the MatLab \verb contour ~function. For the reconstructed one, local Delaunay triangulations were obtained using appropriate subsets of $\Aa$. Let the resulting two cloud of points be respectively $\CC \ell_{\mathrm{rec}}$ and $\CC \ell_{\mathrm{ex}}$. Then the Hausdorff distance between those sets is:
\begin{eqnarray}
L_{\mathrm H}(\CC \ell_{\mathrm{rec}},\CC \ell_{\mathrm{ex}}) 
= \max\left\{\,
\sup_{\xvec \in \CC \ell_{\mathrm{rec}}} \inf_{\yvec \in \CC \ell_{\mathrm{ex}}} d(\xvec,\yvec),\,
\sup_{\yvec \in \CC \ell_{\mathrm{ex}}} \inf_{\xvec \in \CC \ell_{\mathrm{rec}}} d(\xvec,\yvec)\,\right\}
\end{eqnarray}
where $d$ is the Euclidean distance function. To get the first term in braces, the exact signed distance function is used, since for a given $\xvec \in \CC\ell_{\mathrm{rec}}$, we have $\inf_{\yvec \in \CC \ell_{\mathrm{ex}}} d(\xvec,\yvec) = \phi(\xvec,t_{H})$. To get the second one, given $\yvec \in \CC\ell_{\mathrm{ex}}$, the nearest point $\xvec \in \CC\ell_{\mathrm{rec}}$ is found.

\paragraph{Speed tests} To get the results labelled as `Modified FMM' on Figure \ref{fig:SpeedTest}, our method is run. The points labelled as `FMM' are obtained using the first-order Fast Marching Method with different gridsizes. Points with $|\xvec|<0.25+2dx$ are initialized with exact values. The solution to the FMM is computed on the set $|\xvec|<0.75+dx$ (the neighbours of those Accepted points with $|\xvec|>0.75$ were not added to the Narrow Band). The procedure used to sort and update the Narrow Band is the same in both methods, namely: \bull The point with the smallest time value is found using the Matlab \verb min ~ command; \bull This point is removed from the Narrow Band by deleting the corresponding row (using $\NB(I,:)=[\,]$); \bull Each new point is added at the end of the Narrow Band. The CPU time was evaluated using the MatLab \verb cpu ~ command. The final times of the simulations are:  \ExExpanding \, 0.5, \ExTwoCircles \, 0.5, \ExOscillatingCircle \, 1.5 per., \ExEscapingCircle \, 0.4, \ExThreeLeavedRose \, 0.19.

\vspace{.2cm}
\noindent \textbf{Acknowledgements.}
The authors would like to thank Prof.\mbox{ }Bruce Shepherd and Dr.\mbox{ }Jan Feys for helpful discussions about the optimization problem. The first author acknowledges the support of the Schulich Graduate Fellowship. This research was partly supported through the NSERC Discovery and Discovery Accelerator Supplement grants of the second author. We also thank the anonymous referees for their very constructive comments. 

\vspace{-.5cm}

\bibliography{Article2Biblio}{}
\bibliographystyle{plain}

\end{document}